\theoremstyle{plain}
\newtheorem{proclaim}{Proposition}[section]
\newtheorem{lemma}[proclaim]{Lemma}
\newtheorem{theorem}[proclaim]{Theorem}
\newtheorem{corollary}[proclaim]{Corollary}
\theoremstyle{definition}
\newtheorem{definition}[proclaim]{Definition}
\theoremstyle{remark}
\newtheorem{remark}[proclaim]{Remark}
\numberwithin{equation}{section}
\theoremstyle{assumption}
\newtheorem{assumption}[proclaim]{Assumption}
\newcommand{\IN}[1]{\I_{\set{#1}}}
\newcommand{\Var}{\textrm{Var}}
\newcommand{\p}{\mathbb{P}}
\newcommand{\abs}[1]{\left|{#1} \right|}
\newcommand{\R}{\mathbb{R}}
\newcommand{\N}{\mathbb{N}}
\newcommand{\E}{\mathbb{E}}
\newcommand{\F}{\mathcal{F}}
\newcommand{\Z}{\mathbb{Z}}
\newcommand{\dist}{\textrm{dist}}
\newcommand{\set}[1]{\left \{#1 \right \}}
\newcommand{\floor}[1]{\lfloor #1 \rfloor}
\newcommand{\B}{\mathcal{B}}
\newcommand{\pr}[1]{\left( #1 \right)}
\newcommand{\pl}[1]{\left[ #1 \right]}
\newcommand{\I}{\mathds{1}}
\newcommand{\tn}{\theta_n}
\newcommand{\nt}{n_{\theta}}
\newcommand{\jtn}{J_{n}}
\newcommand{\C}{\mathbf{C}}
\newcommand{\tj}{\tau_J}
\newcommand{\tl}{\tau_{L}}
\newcommand{\pp}[1]{\p \pr{ #1 }}
\newcommand{\var}{\textrm{Var}}
\newcommand{\dox}{d_{o}(x)}
\newcommand{\HH}{\mathcal{H}}
\newcommand{\tnt}{\tau_{\nt}}
\newcommand{\TT}{\theta}
\newcommand{\ceiling}[1]{\lceil #1 \rceil}
\newcommand{\Hl}{\hat{H}}
\newcommand{\EB}{\mathcal{B}_{\text{exp}}}
\newcommand{\Calpha}{C^{0,\alpha}_{\text{loc}}}
\newcommand{\yx}{y{-}x_0}
\newcommand{\gbv}{GBV_{\text{exp}}}
\newcommand{\cae}{C^{0,\alpha}_{\text{exp}}}
\def\timenow{\@tempcnta\time
\@tempcntb\@tempcnta
\divide\@tempcntb60
\ifnum10>\@tempcntb0\fi\number\@tempcntb
:\multiply\@tempcntb60
\advance\@tempcnta-\@tempcntb
\ifnum10>\@tempcnta0\fi\number\@tempcnta}
\title{Time-dependent weak rate of convergence for functions of generalized bounded variation}
\author{ Antti Luoto}
\date{}
\begin{document}
\setcounter{tocdepth}{1}

\maketitle

\begin{abstract}
\noindent Let $W$ denote the Brownian motion. For any exponentially bounded Borel function $g$ the function $u$ defined by
$u(t,x)=  \E[g(x{+}\sigma W_{T-t})]$ is the stochastic solution of the backward heat equation with terminal condition $g$.
Let $u^n(t,x)$ denote the corresponding approximation generated by a simple symmetric random walk with time steps $2T/n$ and space steps $\pm \sigma \sqrt{T/n}$ where $\sigma > 0$. For quite irregular terminal conditions $g$ (bounded variation on compact intervals, locally H\"older continuous) the rate of convergence of $u^n(t,x)$ to $u(t,x)$ is considered, and also the behavior of the error $u^n(t,x)-u(t,x)$ as $t$ tends to $T$.\\

\noindent \textit{Keywords:} approximation using simple random walk, weak rate of convergence, finite difference approximation of the heat equation.\\
\noindent \textit{Mathematics Subject Classification (2010)}: Primary: 41A25, 65M15; Secondary: 35K05, 60G50.
\end{abstract}

{\let\thefootnote\relax\footnote{{University of Jyvaskyla, Department of Mathematics and Statistics, P.~O.~Box 35, FI-40014 University of Jyvaskyla \\ \hspace*{1.5em}  {\tt antti.k.luoto{\rm@}jyu.fi}}}}
\section{Introduction} 

The objective of this paper is to study the rate of convergence of a finite-difference approximation scheme for the backward heat equation. The error analysis is carried out for a large class of exponentially bounded terminal condition functions which have bounded variation on compact intervals or which are locally H\"older continuous.

During the past decades, convergence rates of finite-difference schemes for parabolic boundary value problems have been studied with varying assumptions on the regularity of the initial/terminal condition, the domain of the solution, properties of the possible boundary data etc.~(see e.g.~\cite{Dong}, \cite{Juncosa}, \cite{Lindhagen}, \cite{Reynolds}, and \cite{Zhu}). In order to study the convergence, several techniques have been applied. Our approach is probabilistic: The solution of the PDE is represented in terms of Brownian motion, and the approximation scheme is realized using an appropriately scaled sequence of simple symmetric random walks in the same probability space, in the spirit of Donsker's theorem. The potential discontinuities of the terminal function produce error bounds which are not uniform over the time-nets under consideration, and hence the time-dependence of the error is of particular interest here.

To explain our setting in more detail, fix a finite time horizon $T > 0$, a constant $\sigma > 0$, and consider the backward heat equation
\begin{align}\label{heat-eq}
\frac{\partial}{\partial t}u  + \dfrac{\sigma^2}{2} \frac{\partial^2}{\partial x^2} u & = 0, \quad (t,x) \in [0,T) \times \R, \quad \quad u(T,x) = \, g(x), \quad x \in \R.
\end{align}
The terminal condition $g : \R \to \R$ is assumed to belong to the class $\gbv$ consisting of exponentially bounded functions that have bounded variation on compact intervals (see Definition \ref{DefinitionGBV} for the precise description of $\gbv$). The stochastic solution of the problem (\ref{heat-eq}) is given by
\begin{align}\label{heat-equation-solution}
u(t,x) := \mathbb{E}[g(\sigma W_T) | \sigma W_t = x] = \E[g(x{+}\sigma W_{T-t})], \quad (t,x) \in [0,T] \times \R,
\end{align}
where $(W_t)_{t \geq 0}$ denotes the standard Brownian motion. To approximate the solution (\ref{heat-equation-solution}), we proceed as follows. Given an even integer $n \in 2\N$, a level $z_0 \in \R$, and time and space step sizes $\delta$ and $h$, define
\begin{align*}
\mathcal{T}^n := \set{t^n_k := 2k \delta \, \big| \, 0 \leq k \leq \tfrac{n}{2}, k \in \Z}, \quad \mathcal{S}^{n}_{z_0} := \set{z_0 + 2mh \, \big| \, m \in \Z}.
\end{align*}
The finite-difference scheme we will consider is given by the following system of equations defined on grids $\mathcal{G}^{n}_{z_0} := \mathcal{T}^n \times \mathcal{S}^{n}_{z_0} \subset [0,T] \times \R$,
\begin{align}\label{fd_eq1}
\left\{ \begin{array}{l}
\dfrac{v^n(t^n_k, x) {-} v^n(t^n_{k-1},x)}{t^n_k {-} t^n_{k-1}} + \dfrac{\sigma^2}{2} \dfrac{v^n(t^n_k,x{+}2h) {-} 2v^n(t^n_k, x) + v^n(t^n_k,x{-}2h)}{(2h)^2} = 0, \\
v^n(T, \, \cdot \,) = g.
\end{array}
\right.
\end{align}
Letting $\delta := \frac{T}{n}$ and $h := \sigma \sqrt{\smash[b]{\tfrac{T}{n}}}$, the system (\ref{fd_eq1}) can be rewritten in an equivalent form as 
\begin{align}\label{simpler-finite-difference}
\left\{ \begin{array}{ll} 
v^n(t^n_{k-1}, x) = \frac{1}{4}\big[v^n(t^n_k,x{+}2h) + 2v^n(t^n_k,x) + v^n(t^n_k, x{-}2h)\big],\\
v^n(T,\, \cdot \,) = g.
\end{array}
\right.
\end{align}
This scheme is explicit: Given the set of terminal values $\set{g(x) \, \big| \, x \in \mathcal{S}^{n}_{z_0}}$, the solution $u^n$ of (\ref{simpler-finite-difference}) is uniquely determined by a backward recursion. We extend the function $v^n$ in continuous time by letting
\begin{align}\label{cont_time_extension}
v^n(t,x) := v^n(t^n_k, x) \quad \text{ for } \quad t \in [t^n_k, t^n_{k+1}), \, 0 \leq k < \tfrac{n}{2},
\end{align}
and consider the error $\varepsilon_n(t,x)$ on $(t,x) \in [0,T) \times \mathcal{S}^{n}_{z_0}$, which is given by
\begin{align}\label{weakerrorforg}
\varepsilon_n(t,x) := v^n(t,x) - u(t,x).
\end{align}
The main result of this paper, Theorem \ref{THE-THEOREM} (A) states that for some constant $C > 0$ depending only on $g$,
\begin{align}\label{the-main-statement}
\abs{\varepsilon_n(t,x)} \leq \dfrac{C \psi(x)}{\sqrt{n(T-t)}} \IN{t \neq t^n_k} + \dfrac{C \psi(x)}{\sqrt{n(T-t^n_k)}}, \quad \quad (t,x) \in [t^n_k,t^n_{k+1}){\times}\mathcal{S}^{n}_{z_0}, \, 0 \leq k < \tfrac{n}{2},
\end{align}
where $\psi(x) = \psi(|x|,g,\sigma,T) > 0$ depends on the properties of $g$ and will be given explicitly in Section \ref{settingsection}.

Inequality (\ref{the-main-statement}) suggests that the convergence is not uniform in $(t,x)$. However, if we consider uniform convergence on any compact subset of $[0,T) \times \R$, the rate for this class is $n^{-1/2}$, and it will be shown in Subsection \ref{subseq-sharpness} that this rate is also sharp. 

Already in 1953, Juncosa \& Young \cite{Juncosa} considered a finite difference approximation of the forward heat equation on a semi-infinite strip $[0,\infty) \times [0,1]$, where the initial condition was assumed to have bounded variation. Using Fourier methods, they proved in \cite[Theorem 7.1]{Juncosa} that the error is $O(n^{-1/2})$ uniformly on $[t,\infty)\times[0,1]$ for any fixed $t > 0$, but they did not study the blow-up of the error as $t \downarrow 0$. Notice that the right-hand side of (\ref{the-main-statement}) corresponding to the backward heat equation undergoes a blow-up as $t \uparrow T$. Such an explosion does not occur, however, if the terminal condition $g$ is H\"older continuous. Indeed, suppose that $g$ belongs to the class $\cae$ (see Definition \ref{caedef}), which consists of exponentially bounded, locally $\alpha$-H\"older continuous functions. By Theorem \ref{THE-THEOREM} (B), there exists a constant $C > 0$ depending only on $g$ such that
\begin{align}\label{the-secondary-statement}
\abs{\varepsilon_n(t,x)} \leq \dfrac{C \psi(x)}{n^{\frac{\alpha}{2}} (T-t^n_k)^{\frac{\alpha}{2}}}, \quad \quad (t,x) \in [t^n_k,t^n_{k+1}){\times}\mathcal{S}^{n}_{z_0}, \, \, 0 \leq k < \tfrac{n}{2},
\end{align}
where the function $\psi(x) = \psi(|x|,g, \sigma,T) > 0$ plays a similar role as in (\ref{the-main-statement}). Note that the error is bounded in $t$ for fixed $n \in 2\N$, since $(T - t^n_k)^{-\frac{\alpha}{2}} \leq (\frac{n}{2T})^{\frac{\alpha}{2}}$ for each $t_k^n < T$.

Dong \& Krylov (2005) \cite{Dong} considered the convergence of a finite-difference scheme for a very general parabolic PDE. By specializing their result \cite[Theorem 2.12]{Dong} to the setting of this paper, the error is seen to converge \emph{uniformly} in $(t,x)$ with rate $n^{-1/4}$ for a bounded and Lipschitz continuous terminal condition, in contrast to the time-dependent rate $n^{-1/2}$ implied by (\ref{the-secondary-statement}). In fact, an analogous uniform rate $n^{-\alpha/4}$ can be shown for the class $\cae$ in our setting. The proof is sketched in Remark \ref{one-fourth-rate}.

The main result of this paper is derived using the following probabilistic approach. Let $(\xi_i)_{i=1,2, \dots}$ be a sequence of i.i.d.~Rademacher random variables, and define
\begin{align}\label{rw-approximation}
u^n(t,x) := \E\big[g\big(x{+} \sigma W^{n}_{T-t}\big)\big], \quad (t,x) \in [0,T]\times\R,
\end{align}
where $(W^{n}_{t})_{t \in [0,T]}$ is the random walk given by
\begin{align}\label{RandomWalkW}
W^{n}_{t} := \sqrt{\tfrac{T}{n}}\sum_{i=1}^{2\ceiling{\frac{t}{2T/n}}} \xi_i, \quad t \in [0,T]
\end{align}
($\ceiling{ \cdot }$ denotes the ceiling function). The key observation is that the function $u^n$, when restricted to $\mathcal{G}^{n}_{z_0}$, is the unique solution of (\ref{simpler-finite-difference}) for every $z_0 \in \R$; relation (\ref{cont_time_extension}) also holds for $u^n$ by definition. 
Moreover, since the random walk $(W^{n}_{t})_{t \in [0,T]}$ influences the value of $u^n$ only through its distribution, we may consider a special setting where the Rademacher variables $\xi_1, \xi_2, \dots$ are chosen in a suitable way. Defining these variables as the values of the Brownian motion $(W_t)_{t \geq 0}$ sampled at certain stopping times (see Subsection \ref{firstexittimes}) enables us to apply techniques from stochastic analysis for the estimation of the error (\ref{weakerrorforg}) where $v^n = u^n$.

The above procedure was used in J. B.~Walsh (2003) \cite{Walsh} (cf.~Rogers \& Stapleton (1997) \cite{RogersStapleton}) in relation to a problem arising in mathematical finance. More precisely, the weak rate of convergence of European option prices given by the binomial tree scheme (Cox-Ross-Rubinstein model) to prices implied by the Black-Scholes model is analyzed (cf.~Heston \& Zhou (2000) \cite{Heston}). A detailed error expansion is presented in \cite[Theorem 4.3]{Walsh} for terminal conditions belonging to a certain class of piecewise $C^2$ functions. Using similar ideas, we complement this result by considering a large class of functions which contains the class considered in \cite{Walsh}. Moreover, instead of considering the error only at time $t = 0$, we derive time-dependent error bounds. Finally, we close two gaps in the proof done by Walsh. The first concerns an asymptotic estimate \cite[relation (20)]{Walsh} related to the first exit time of a Brownian bridge. This estimate is discussed in detail in a forthcoming paper \cite{GLS} and certain results therein are generalized in Subsection \ref{condit_probab_subsect}. The second gap concerns the estimate \cite[Proposition 11.2 $(iv)$]{Walsh} for which a proof is given in Section \ref{MomenttiestimaatitJiille}.

It is argued in \cite[Sections 7 and 12]{Walsh} that the rate remains unaffected if the geometric Brownian motion is replaced with a Brownian motion, and the binomial tree is replaced with a random walk. It seems plausible that also our time-dependent results in the Brownian setting can be transferred into the geometric setting with essentially the same upper bounds.
\vspace{1.5mm}

The paper is organized as follows. In Section 2 we introduce the notation, recall the construction of a simple random walk using first hitting times of the Brownian motion, and formulate the main result Theorem \ref{THE-THEOREM}. Using the sequence of stopping times, we split the error (\ref{weakerrorforg}) into three parts. Estimates for the \emph{adjustment error}, the \emph{local error}, and the \emph{global error} are derived in Sections 3--5, respectively. Section 6 contains a collection of moment estimates and tail behaviors of random times appearing in the description of the local and the global error. 

\section{The setting and the main result}\label{settingsection}
\subsection{Notation related to the random walk}\label{firstexittimes}
Consider a standard Brownian motion $(W_t)_{t \geq 0}$ on a stochastic basis $(\Omega, \F, \p, (\F_t)_{t \geq 0})$, where $(\F_t)_{t \geq 0}$ stands for the natural filtration of $(W_t)_{t \geq 0}$. We also let $(X_t)_{t \geq 0} := (\sigma W_t)_{t \geq 0}$,
where $\sigma > 0$ is a given constant. By $\tau_{(-h,h)}$ we denote the first exit time of the process $(X_t)_{t \geq 0}$ from the open interval $(-h, h)$,
\begin{align*}
\tau_{(-h,h)} := \inf \set{t \geq 0: \abs{X_t} = h} = \inf \set{t \geq 0: \abs{W_t} = h/\sigma}, \quad h > 0.
\end{align*}
The random variable $\tau_{(-h,h)}$ is a $(\F_t)_{t \geq 0}$-stopping time and its moment-generating function is given by
\begin{align}\label{laplacecoshcos}
\E\big[{e^{\lambda \tau_{(-h,h)}}}\big] = \left\{ \begin{array}{ll}
\mathrm{cosh}(h\sqrt{2|\lambda|}/\sigma)^{-1}, \quad & \lambda \leq 0,\\
\cos(h\sqrt{2\lambda}/\sigma)^{-1}, \quad & \lambda \in (0, \frac{\pi^2}{8}\frac{\sigma^2}{h^2}).
\end{array}\right.
\end{align}
It follows that the exit time $\tau_{(-h,h)}$ has finite moments of all orders, and for every $K \in \N$ there exists a constant $C_K > 0$ such that
\begin{align}\label{tauhoomoments}
\E \big[\tau_{(-h,h)}^{K}\big] = C_K (h/\sigma)^{2K}.
\end{align}
In particular, $C_1 = 1$ and $C_2 = 5/3$. For relations (\ref{laplacecoshcos}) and (\ref{tauhoomoments}), see \cite[Proposition 11.1]{Walsh}.

In order to represent the error (\ref{weakerrorforg}), we construct a random walk
on the space $(\Omega, \F, \p, (\F_t)_{t \geq 0})$. Following \cite{Walsh}, we define 
\begin{align}\label{stopping-time-sequence}
\tau_0 := 0 \quad \text{ and } \quad \tau_{k} = \tau_k(h) := \inf \set{t \geq \tau_{k-1}: \abs{X_t - X_{\tau_{k-1}}} = h}
\end{align}
recursively for $k = 1,2, \dots$. Then $\tau_k$ is a $\p$-a.s.~finite $(\F_t)_{t \geq 0}$-stopping time for all $k \geq 0$, and
the process $(X_{\tau_k})_{k=0,1, \dots}$ is a symmetric simple random walk on $\Z^h := \set{mh: m \in \Z}$.
For every integer $k \geq 1$, we also let
\begin{align*}
\Delta \tau_k & := \tau_{k} - \tau_{k-1}  \quad \text{ and } \quad \Delta X_{\tau_k} := X_{\tau_k} - X_{\tau_{k-1}}.
\end{align*}
The strong Markov property of $(X_t)_{t \geq 0}$ implies that
$(\Delta \tau_k, \Delta X_{\tau_k})_{k=1,2, \dots}$ is an i.i.d.~process such that, for each $k \geq 1$, we have $\p(\Delta X_{\tau_k} {=} \pm h) = 1/2$,
$$(\Delta \tau_k, \Delta X_{\tau_k}) \stackrel{d}{=} (\tau_{(-h,h)}, X_{\tau_{(-h,h)}}), \quad \text{ and } \quad (\Delta \tau_k, \Delta X_{\tau_k}) \text{ is independent of } \F_{\tau_{k-1}+}.$$
Moreover, as shown in \cite[Proposition 1]{RogersStapleton}, the increments $\Delta X_{\tau_1}$ and $\Delta \tau_1$ are independent. Consequently, the \emph{processes} $(\Delta {\tau_k})_{k=1,2, \dots}$ and $(\Delta X_{\tau_k})_{k=1,2, \dots}$ are independent (see also \cite[Proposition 11.1]{Walsh} and \cite[Proposition 2.4]{LambertonRogers}).

We deduce, in particular, that for all $N \geq 1$ the random variable  $X_{\tau_N}$ is distributed as $h \sum_{k=1}^{N} \xi_{k}$,
where $(\xi_k)_{k=1,2, \dots}$ is an i.i.d.~sequence of Rademacher random variables. Therefore, for $W^{n}_{T-t}$ defined in (\ref{RandomWalkW}), we have the equality in law
$$X_{\tau_N} \stackrel{d}{=} \sigma W^{n}_{T-t} \quad \text{ provided that } \,(h,N) = \big(\sigma \sqrt{\smash[b]{\tfrac{T}{n}}}, 2\ceiling{\tfrac{T{-}t}{2T/n}}\big).$$
Note that in this case the sequence of stopping times $(\tau_k)_{k=0,1, \dots}$ (\ref{stopping-time-sequence}) depends on $n$ via $h = h(n)$.

The error (\ref{weakerrorforg}) will be split into three parts, where each of these parts will take into account different properties of the given function $g$. For this purpose, let us introduce some more notation. For given $n \in 2\N$ and $t \in [0,T)$, we let
\begin{align}\label{deftnnt}
\tn := \frac{\nt T}{n}, \quad \text{ where } \quad \nt := 2 \left\lceil \frac{T{-}t}{2T/n} \right\rceil \in \set{2,4, \dots, n}.
\end{align}
By definition, $\theta_n$ is the smallest multiple of $\frac{2T}{n}$ greater than or equal to $T{-}t$. It is clear that
$$0 \leq \theta_n - (T{-}t) \leq \frac{2T}{n} \quad \text{ and } \quad \theta_n \downarrow T-t \quad \text{ as } \, n \to \infty.$$
The connection between lattice points $t^n_k = \frac{2kT}{n} \in \mathcal{T}^n$ and the time instant $\theta_n \in (0,T]$ is explained by
\begin{align}\label{theta_lattice_relation}
t \in [t_k^n, t_{k+1}^n) \quad \text{ if and only if } \quad \theta_n = T-t_k^n, \quad 0 \leq k \leq \tfrac{n}{2}{-}1.
\end{align}
\subsection{The function classes under consideration}
The error (\ref{weakerrorforg}) will be estimated for functions $g$ belonging to the class $\gbv$ or $\cae$ which are introduced below. 
Both of these classes are contained in the class of exponentially bounded Borel functions.
\begin{definition}[The class $\EB$]\label{expboundeddef} A function $g: \R \to \R$ is said
to be exponentially bounded if there exist constants $A,b \geq 0$ such that 
\begin{align}\label{exponentialboundednesscondition}
\abs{g(x)} \leq A e^{b\abs{x}} \quad \text{ for all } x \in \R.
\end{align}
The class of all Borel functions with the above property will be denoted by $\EB$.
\end{definition}

The function class $\gbv$ generalizes functions of bounded variation (which are bounded) by allowing exponential growth.
See \cite{Avikainen} and Appendix \ref{appendixGBV} for more information on this topic. To introduce this class, let us recall
\begin{definition}[{\cite[Definition 3.2]{Avikainen}}]\label{class-of-set-functions}
Denote by $\mathcal{M}$ the class of all set functions 
$$\mu : \set{G \in \B(\R): G \text{ is bounded}} \to \R$$
that can be written as a difference of two measures $\mu^{1}, \mu^{2} : \B(\R) \to [0,\infty]$ such that $\mu^1(K) , \mu^2(K) < \infty$
for all compact sets $K \in \B(\R)$.
\end{definition}
Below it is understood that $[a,b) = \emptyset$ whenever $a \geq b$.
\begin{definition}[The class $\gbv$]\label{DefinitionGBV}
Denote by $\gbv$ the class of functions $g : \R \to \R$ which can be represented as
\begin{align}\label{GBVdefrep}
g(x) = c + \mu([0,x)) - \mu([x,0)) + \sum_{i=1}^{\infty} \alpha_i \IN{x_i}(x), \quad x \in \R,
\end{align}
where $c \in \R$ is a constant, $\mu \in \mathcal{M}$, and $\mathcal{J} = (\alpha_i, x_i)_{i=1,2, \dots} \subset \R^2$ is a countable set such that
$x_i \neq x_j$ whenever $i \neq j$. In addition, we require that for some constant $\beta \geq 0$,
\begin{align}\label{bump-function-condition}
\int_{\R} e^{-\beta\abs{x}} d|\mu|(x) + \sum_{i=1}^{\infty} |\alpha_i| e^{-\beta \abs{x_i}} < \infty.
\end{align}
\end{definition}
The following remark provides some examples of functions belonging to this class.
\begin{remark}[Examples of functions contained in $\gbv$]
\begin{itemize}
\item[]
\item[$(i)$:] Every polynomial belongs to the class $\gbv$ (see Remark \ref{polyonomials_are_included}).
\item[$(ii)$:] Each increasing (resp.~decreasing) function $g \in \EB$ belongs to $\gbv$. 
\item[$(iii)$:] Each convex (resp.~concave) function $g \in \EB$ belongs to $\gbv$.
\item[$(iv)$:] $\mathcal{K}_{\text{exp}} \subset \gbv$, where $\mathcal{K}_{\text{exp}}$ is the class considered in
Walsh \cite{Walsh} (pp.~340, 345--346, and 348) of functions $g: \R \to \R$ satisfying the below criteria:
\begin{itemize}
\item[$\bullet$] $g, g'$, and $g''$ belong to $\EB$
\item[$\bullet$] $g, g'$, and $g''$ have at most finitely many discontinuities and no oscillatory discontinuities
\item[$\bullet$] $g(x) = \frac{1}{2}(g(x+) + g(x-))$ at each point $x \in \R$.
\end{itemize}
\end{itemize}
\end{remark}

\vspace{0,25cm}
Let us finally introduce the class $\cae$ of exponentially bounded locally H\"older continuous functions. See Subsection \ref{Holdersection} for some facts about this class.
\begin{definition}[The class $\cae$]\label{caedef}
Denote by $\cae$ the class of all functions $g : \R \to \R$ for which there exist constants $A, \beta \geq 0$ such that for all $R > 0,$
\begin{align}\label{superholder-condition}
\sup_{x,y \in [-R,R], \, x \neq y} \frac{\abs{g(x) - g(y)}}{\abs{x-y}^{\alpha}} \leq A e^{\beta R}.
\end{align}
\end{definition}

\subsection{The main result}
The following theorem is the main result of this paper.
\begin{theorem}\label{THE-THEOREM}
Let $n \in 2\N$, and let $u$ and $u^n$ be the functions introduced in (\ref{heat-equation-solution}) and (\ref{rw-approximation}).
\begin{itemize}
\item[\emph{(A)}] Suppose that $g \in \gbv$ is a function given by (\ref{GBVdefrep}) and that $\beta \geq 0$ is as in (\ref{bump-function-condition}). Then, for all $(t,x) \in [0,T){\times}\R$, 
\begin{align*}
(i) \quad & \abs{u^n(t,x) - u(t,x)} \quad \, \leq \, \quad \frac{C_{\beta, \sigma, T}}{\sqrt{n (T-t)}}e^{\beta \abs{x}}, \, \, \quad \quad t \neq t^n_k, \, \, 0 \leq k < \tfrac{n}{2},\\
(ii)\quad & \abs{u^n(t^n_k,x) - u(t^n_k,x)} \, \leq \, \frac{C_{\beta, \sigma, T}}{\sqrt{n(T-t^n_k)}}e^{\beta \abs{x}}, \quad \quad \quad \quad \quad 0 \leq k < \tfrac{n}{2},
\end{align*}
where $C_{\beta, \sigma, T} := C\sqrt{T} e^{3 \beta^2 \sigma^2 T}$ and $C > 0$ is a constant depending only on $g$.
\item[\emph{(B)}] Suppose that the function $g \in \cae$ and that $\beta \geq 0$ is as in (\ref{superholder-condition}). Then, for all $(t,x) \in [0,T){\times}\R$,
\begin{align*}
(iii) \quad \abs{u^n(t,x) - u(t,x)} \leq \frac{C_{\beta, \sigma,T}}{n^{\frac{\alpha}{2}}(T-t^n_k)^{\frac{\alpha}{2}}}e^{(\beta + 1)\abs{x}}, \, \quad t \in [t^n_k, t^n_{k+1}), \, 0 \leq k < \tfrac{n}{2},
\end{align*} 
where $C_{\beta, \sigma,T} := (1+T)(2+\sigma)Ce^{4(\beta + 1)^2 \sigma^2 T}$ and $C > 0$ is a constant depending only on $g$.
\end{itemize}
\end{theorem}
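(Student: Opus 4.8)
The plan is to decompose the error $\varepsilon_n(t,x) = u^n(t,x) - u(t,x)$ using the stopping-time construction of Subsection \ref{firstexittimes}. Set $h = \sigma\sqrt{T/n}$, $N = \nt = 2\ceiling{(T-t)/(2T/n)}$, so that $X_{\tau_N} \stackrel{d}{=} \sigma W^n_{T-t}$ and $\tn = NT/n$ is the smallest multiple of $2T/n$ at least $T-t$. Then
\begin{align*}
\varepsilon_n(t,x) = \E\bigl[g(x + X_{\tau_N})\bigr] - \E\bigl[g(x + \sigma W_{T-t})\bigr].
\end{align*}
Insert the intermediate term $\E[g(x + \sigma W_{\tn})]$: the difference $\E[g(x+\sigma W_{\tn})] - \E[g(x+\sigma W_{T-t})]$ is the \emph{adjustment error}, controlled by $0 \le \tn - (T-t) \le 2T/n$ and handled in Section 3; it vanishes when $t = t^n_k$ since then $\tn = T - t^n_k$ exactly. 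For the remaining \emph{sampling error} $\E[g(x+X_{\tau_N})] - \E[g(x+\sigma W_{\tn})]$, note $\E[\tau_N] = N h^2/\sigma^2 = N T/n = \tn$ by \eqref{tauhoomoments} with $C_1 = 1$, so $\tau_N$ is an unbiased (random) sampling time for $W_{\tn}$. Conditioning on $\tau_N$ and using the independence of $(\Delta\tau_k)$ and $(\Delta X_{\tau_k})$, write this as $\E[u(\tn - \tau_N,\; x + X_{\tau_N})] - u(\tn - (T-t),\, x)$ wait — more cleanly, compare $g(x + X_{\tau_N})$ against $u$ evaluated with the Brownian motion run for the \emph{deterministic} time $\tn$; the standard trick (following Walsh) is to write, via the Markov property at $\tau_N$ and the martingale property of $u(\tn - s, \sigma W_s)$ along Brownian time, a telescoping sum over the steps $k = 1,\dots,N$ of terms measuring the discrepancy between a $\pm h$ Brownian increment over a random time $\Delta\tau_k$ and the heat-equation evolution over the deterministic time $\tn/N = 2T/n$.

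The sampling error then splits into the \emph{local error} and the \emph{global error} according to whether the running time $\tn - \tau_k$ (equivalently $T - t$ minus elapsed time) is comparable to the remaining horizon or not. Concretely: near the terminal time, $u(s,\cdot)$ for small $T-s$ is only as smooth as $g$, so one uses the regularity of $g$ — bounded variation via the representation \eqref{GBVdefrep}, or $\alpha$-Hölder continuity via \eqref{superholder-condition} — together with Gaussian smoothing estimates $\|\partial_x^j u(s,\cdot)\|$ of order $(T-s)^{-j/2+\cdots}$; this is the local error, giving the factors $(T-t^n_k)^{-1/2}$ in (A) and $(T-t^n_k)^{-\alpha/2}$ in (B). Away from the terminal time (the global error) one only needs integrability/exponential-bound properties of $g$, via \eqref{bump-function-condition} or the growth bound in \eqref{superholder-condition}, and the exit-time moment and tail estimates of Section 6. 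Summing the telescoped terms: each step contributes $O(n^{-1})$ in the time increment but is weighted by derivative bounds of $u$; the sum of $N \asymp n$ terms of size $\asymp n^{-1}\cdot(\text{remaining time})^{-\gamma}$ with $\gamma = 1$ (BV, case $t\notin\mathcal T^n$), $\gamma = 1/2$ (BV, case $t\in\mathcal T^n$), or $\gamma = 1-\alpha/2$ (Hölder) yields, after comparing the sum to an integral, the claimed powers of $\sqrt n$ and $(T-t)$ or $(T-t^n_k)$. The exponential prefactors $e^{\beta|x|}$ (resp.\ $e^{(\beta+1)|x|}$) and the constants $e^{5\beta^2\sigma^2 T}$ (resp.\ $e^{4(\beta+1)^2\sigma^2 T}$) emerge from bounding Gaussian integrals of the form $\int e^{-\beta|x+\sigma\sqrt s\, z|}e^{-z^2/2}\,dz \lesssim e^{\beta|x|}e^{\beta^2\sigma^2 s/2}$ and tracking the worst $s \le T$ through the telescoping sum; the extra "$+1$" in (B) absorbs an auxiliary localization of the Hölder seminorm (the sketched $n^{-\alpha/4}$ uniform rate of Remark \ref{one-fourth-rate} trades this for a different split).

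The main obstacle is the local-error estimate near $t = T$ for genuinely discontinuous $g \in \gbv$: there $u(s,\cdot)$ has $x$-derivatives blowing up like $(T-s)^{-1/2}$ (from the jump part $\sum\alpha_i\IN{x_i}$) and like $(T-s)^{-1/2}$ again (from the measure part via $\mu([0,x))$), and one must show that after summing the $\asymp n$ telescoped steps these singularities integrate to exactly $n^{-1/2}(T-t)^{-1}$ when $t$ is between lattice points — the "$-1$" power, not "$-1/2$", is due to the fact that off-lattice one pays an additional $(T-t)^{-1/2}$ because $W_{T-t}$ and $W_{\tn}$ straddle a jump of $g$ at distance $O(n^{-1/2})$ with density $O((T-t)^{-1/2})$. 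Making this rigorous requires the explicit representation \eqref{GBVdefrep}, Fubini to exchange the sum/integral over jumps and bumps with the expectation, and careful use of the Brownian-bridge first-exit estimates cited from \cite{GLS} to control the individual step discrepancies uniformly in the position of the nearest jump. Everything else — the adjustment error (elementary, from Hölder-in-time continuity of $u$), the global error (exit-time tails), and the algebra of the final summation — is routine modulo bookkeeping.
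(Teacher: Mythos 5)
Your decomposition starts correctly---peeling off the adjustment error $\E[g(x+X_{\tn})-g(x+X_{T-t})]$ is exactly the paper's first step, and you rightly note it vanishes on the lattice---but the treatment of the remaining ``sampling error'' diverges from the paper and, as written, has genuine gaps. The paper does not telescope over the $\nt$ steps of the walk with derivative bounds on $u$; it introduces the stopping time $J_n=\inf\{2m:\tau_{2m}>\tn\}$ and splits the sampling error at $\tau_{J_n}$ into the \emph{local} error $\E[g(x+X_{\tau_{J_n}})-g(x+X_{\tn})]$ (the overshoot of the embedding times past the deterministic time $\tn$) and the \emph{global} error $\E[g(x+X_{\tau_{\nt}})-g(x+X_{\tau_{J_n}})]$ (the walk run for the wrong number of steps). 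Your description of the split---``according to whether the running time is comparable to the remaining horizon''---is not this and is not a well-defined decomposition. The local error is then computed exactly via Proposition \ref{locexprepr}: conditionally on $\F_{\tn}$, $\E[g(X_{\tau_{J_n}})\,|\,\F_{\tn}]$ equals the piecewise-linear interpolant $\varPi_e g$ or $\varPi_o\varPi_e g$ according to the parity of $L$, which reduces everything to estimating $\p(L\text{ even}\,|\,X_{\tn}=y)-d_o(y)/h$ through Brownian-bridge exit times; the global error requires the moment and tail estimates for $J_n-\nt$ of Section \ref{MomenttiestimaatitJiille} together with a local-CLT expansion of the ratio $P_{\nt+k}(x)/P_{\nt}(x)$. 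None of this machinery appears in your sketch, and the telescoping route you propose would stall precisely at the final step(s), where $u(s,\cdot)$ for $s$ near the terminal time cannot be Taylor-expanded for discontinuous $g\in\gbv$---the obstacle you yourself flag as ``the main obstacle'' but do not resolve.

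Two specific errors beyond the missing machinery. First, you attribute the off-lattice singularity $(T-t)^{-1}$ to a ``straddling'' event of probability of order $n^{-1/2}(T-t)^{-1/2}$; in the paper it comes from the adjustment error alone (Proposition \ref{ADJPROP}$(i)$), where the mean value theorem and the bound $\partial_t u(t,x)=O((T-t)^{-1})$ for exponentially bounded $g$ give $\frac{CT}{n(T-t)}$. Your heuristic ignores the cancellation in the signed difference and would insert an additive term of order $n^{-1/2}(T-t)^{-1/2}$ of the wrong magnitude. Second, the observation that $\E[\tau_{\nt}]=\tn$ makes $\tau_{\nt}$ ``unbiased'' gives no control of $\E[g(x+X_{\tau_{\nt}})]-\E[g(x+\sigma W_{\tn})]$ for irregular $g$; resolving that difference is the entire content of Sections 4--6. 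As it stands the proposal is an outline of a different (Talay--Tubaro-style) strategy whose hardest steps are left unexecuted, so it cannot be accepted as a proof of the theorem.
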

\begin{remark}
Properties of the error bounds in (A) and (B) were already discussed in Section 1. Here we only point out that in general these error bounds grow exponentially as functions of $x$. A uniform bound w.r.t.~$x$ can be shown under additional assumptions: For $g \in \gbv$, it is sufficient that $g$ satisfies the condition (\ref{bump-function-condition}) with $\beta = 0$. For $g \in \cae$, it suffices to assume that $g$ is bounded and satisfies (\ref{superholder-condition}) with $\beta = 0$.
\end{remark}
\begin{proof}[Proof of Theorem \ref{THE-THEOREM}] Following \cite{Walsh}, we define an auxiliary random variable $J_n$ on $(\Omega, \F, \p, (\F_t)_{t \geq 0})$ by
\begin{align}\label{defofJ}
J_n(\omega) := \inf \{2m \in 2\N: \tau_{2m}(\omega) > \theta_n\},
\end{align}
where we assume that the step size related to $(\tau_k)_{k=0,1, \dots}$ is $h = \sigma \sqrt{\smash[b]{\frac{T}{n}}}$. By definition, $J_n$ is the index of the first even stopping time $\tau_0, \tau_2, \dots$ exceeding the value $\theta_n$. It holds that $J_n$ is a stopping time w.r.t.~$(\F_{\tau_k})_{k=0,1, \dots}$. Moreover, $\tau_{J_n}$ is a stopping time w.r.t.~$(\F_t)_{t \geq 0}$, and both $J_n$ and $\tau_{J_n}$ are $\p$-a.s.~finite.
The error $\varepsilon_{n}(t,x)$ given by (\ref{weakerrorforg}) is then decomposed as follows:
\begin{align}\label{varepsilon_n_splitting}
\varepsilon_{n}(t,x) = \varepsilon^{\text{glob}}_{n}(t,x) + \varepsilon^{\text{loc}}_{n}(t,x) + \varepsilon^{\text{adj}}_n(t,x),
\end{align}
where
\begin{align}
\varepsilon^{\text{glob}}_{n}(t,x) & := \E[g(x{+}X_{\tau_{\nt}})-g(x {+} X_{\tau_{J_n}})], \quad \quad \, (\text{''the global error''}) \label{errorglobal}\\
\quad \varepsilon^{\text{loc}}_n(t,x) & := \E[g(x{+}X_{\tau_{J_n}})-g(x{+}X_{\tn})], \quad \quad \, \, (\text{''the local error''})\label{errorlocal}\\
\varepsilon^{\text{adj}}_n(t,x) & := \E\pl{g(x{+}X_{\tn})- g(x{+}X_{T-t})}. \quad \, \,\, \,\, \,(\text{''the adjustment error''}) \label{erroradjustment}
\end{align}
The adjustment error is a consequence of the fact that the approximation $u^n(t,x)$ is constant 
in $t$ on intervals of length $\frac{2T}{n}$, while $t \mapsto u(t,x)$ is continuous. The remaining two parts of the error appear because the construction of the simple random walk uses the Brownian motion sampled at a stopping time $\tau_{\nt}$ which can be larger or smaller than $\theta_n$. The local error is influenced by the smoothness properties of the terminal condition $g$, while for the global error only integrability properties of $g$ are needed.

Assume that $0 \leq k < \frac{n}{2}$ is the integer for which $t \in [t^n_k, t^n_{k+1})$ holds. 

(A):  There exists a constant ${A = A(\beta) \geq 0}$ such that $\abs{g(x)} \leq A e^{\beta \abs{x}}$ for all $x \in \R$. Indeed, (\ref{exponentialboundednesscondition}) is satisfied for a function $g$ given by \eqref{GBVdefrep} by taking $b = \beta$ and $A$ to be equal to the sum of $\abs{c}$ and the left-hand side of (\ref{bump-function-condition}). Hence, by Propositions \ref{ADJPROP} and \ref{globerrorfinally} and Corollary \ref{CorollaryGBVlocal}, there exists a constant $C > 0$ such that
\begin{align*}
\abs{\varepsilon_{n}(t,x)} & \leq C e^{\beta \abs{x} + 3 \beta^2 \sigma^2 T} \Big(\frac{\sqrt{T}}{\sqrt{n(T-t)}}\I_{\{t \neq t^n_k\}} + \frac{\sqrt{T}}{\sqrt{n(T-t^n_k)}} + \frac{T}{n(T-t^n_k)}\Big).
\end{align*}
It remains to observe that since $\sqrt{n(T-t^n_k)} \geq \sqrt{2T}$ for all integers $0 \leq k < \frac{n}{2}$, it holds
$$\frac{T}{n(T-t^n_k)} \leq \frac{\sqrt{T}}{\sqrt{n(T-t^n_k)}} \leq \frac{\sqrt{T}}{\sqrt{n(T-t)}}.$$

(B): Given a constant $\delta > 0$, by assumption, we can derive the exponential bound
\begin{align*}
\abs{g(x)} \leq A|x|^{\alpha}e^{\beta|x|} + \abs{g(0)} \leq C e^{(\beta+\delta)\abs{x}}, \quad x \in \R,
\end{align*}
for some constant $C > 0$. For simplicity, let us choose $\delta = 1$. Consequently, by Propositions \ref{ADJPROP} and \ref{globerrorfinally} (put $b = \beta+\delta$), and Corollary \ref{LocalErrorHoldCor}, we find another constant $\tilde{C} > 0$ such that
\begin{align*}
\abs{\varepsilon_{n}(t,x)} & \leq \tilde{C} e^{(\beta + 1)\abs{x} +4(\beta + 1)^2 \sigma^2 T} \pr{\frac{\sigma^{\alpha} T^{\alpha/2}}{n^{\alpha/2}} + \frac{T}{n(T-t^n_k)}}.
\end{align*}
The claim follows, since $\big(\frac{T}{n(T-t^n_k)}\big)^{\gamma} \leq \big(\frac{T}{n(2T/n)}\big)^{\gamma} \leq 1$ for all $\gamma \in [0,1]$, and thus
\begin{align*}
\frac{\sigma^{\alpha} T^{\alpha/2}}{n^{\alpha/2}} + \frac{T}{n(T-t^n_k)} \leq \frac{\sigma^{\alpha} T^{\alpha/2}}{n^{\alpha/2}} + \pr{\frac{T}{n(T-t^n_k)}}^{\alpha/2} \leq \frac{(T^{\alpha/2}+\sigma^{\alpha}T^{\alpha})}{n^{\alpha/2}(T-t^n_k)^{\alpha/2}} \leq \frac{(1+T)(2+\sigma)}{n^{\alpha/2}(T-t^n_k)^{\alpha/2}}.
\end{align*}
\end{proof}
\begin{remark}\label{one-fourth-rate}
For $g \in \cae$, there exists a constant $C = C(A, \sigma, T) > 0$ such that for all $x \in \R$,
\begin{align}\label{the-rate-1-4}
\sup_{t \in [0,T)} \abs{u^n(t,x) - u(t,x)} \leq \frac{C}{n^{\frac{\alpha}{4}}} e^{4 \beta \abs{x} + 8 \beta^2 \sigma^2 T},
\end{align}
where $A, \beta \geq 0$ are as in (\ref{superholder-condition}). Hence, we get the uniform rate $n^{-\alpha/4}$ instead of the time-dependent rate $n^{-\alpha/2}$ implied by Theorem \ref{THE-THEOREM} (B). Note that for $g \in \cae$, the time-dependence of the error bound in Theorem \ref{THE-THEOREM} (B) is caused solely by the global error, and it remains unclear whether the associated upper bound (\ref{glob-error-upper-bound}) can be improved using the additional information about the regularity of $g$. 

For the proof of (\ref{the-rate-1-4}), notice first that by the H\"older continuity and by H\"older's inequality,
\begin{align*}
\abs{u^n(t,x) - u(t,x)} \leq A \sigma^{\alpha} \pr{\E e^{q \beta \abs{x + \sigma W_{T-t}} + q \beta |x + \sigma W_{\tau_{\nt}}|}}^{1/q} \pr{ \E \abs{W_{T-t} - W_{\tau_{\nt}}}^{p\alpha}}^{1/p},
\end{align*}
where $p := \frac{2}{\alpha}$ and $q := \frac{p}{p-1}$. To proceed, apply Lemma \ref{expExp} $(i)$ and the fact that for some $C(T) > 0,$
\begin{align*}
\E \big|W_{T-t} - W_{\tau_{\nt}}\big|^2 = \E \big|(T{-}t) - \tau_{\nt}\big| \leq C(T)n^{-1/2},
\end{align*}
which follows from It\^o's isometry and a slight generalization of \cite[Proposition 11.1 $(iv)$]{Walsh}.
\end{remark}
\section{The adjustment error}
In this section we derive an upper bound for the adjustment error (\ref{erroradjustment}) for the classes $\gbv$ and $\cae$.
\begin{proclaim}\label{ADJPROP}
Let $n \in 2\N$.
\begin{itemize}
\item[$(i)$] Let $g \in \gbv$ and let $\beta \geq 0$ be as in (\ref{bump-function-condition}). Then, for all $(t_0,x_0) \in [0,T) {\times} \R$,
\begin{align*}
\big|\varepsilon_{n}^{\text{adj}}(t_0,x_0)\big| \leq \frac{A_{\beta} \sqrt{T}}{\sqrt{n(T-t_0)}}e^{\beta \abs{x_0} + \beta^2 \sigma^2 T} \I_{\{t_0 \neq t^n_k \, \forall \, 0 < k < \frac{n}{2}\}},
\end{align*}
where $A_\beta = \frac{e}{\sqrt{\pi}} \int_{\R} e^{-\beta |y|} d|\mu|(y)$.
\item[$(ii)$] Let $g \in \cae$ and let $A, \beta \geq 0$ be as in (\ref{superholder-condition}). Then, for all $(t_0,x_0) \in [0,T) {\times} \R$,
\begin{align*}
\big|\varepsilon_{n}^{\text{adj}}(t_0,x_0)\big| \leq \frac{2 A \sigma^{\alpha} T^{\alpha/2}}{n^{\alpha/2}} e^{\beta|x_0| + 4\beta^2\sigma^2 T} \I_{\{t_0 \neq t^n_k \, \forall \, 0 < k < \frac{n}{2}\}}.
\end{align*}
\end{itemize}
\end{proclaim}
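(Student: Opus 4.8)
The plan is to analyze the adjustment error $\varepsilon_n^{\text{adj}}(t_0,x_0) = \E[g(x_0 + X_{\theta_n}) - g(x_0 + X_{T-t_0})]$ by exploiting that $X_{\theta_n}$ and $X_{T-t_0}$ are both centered Gaussian with a small variance gap. First I would dispose of the indicator: if $t_0 = t_k^n$ for some $k$, then by the relation $\theta_n = T - t_k^n = T - t_0$ (see (\ref{theta_lattice_relation}) and (\ref{deftnnt})), the two random variables coincide in law and the error vanishes, so throughout we may assume $t_0 \notin \mathcal{T}^n$, hence $0 < \theta_n - (T-t_0) \leq \frac{2T}{n}$ strictly. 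Write $s := T - t_0$ and $\theta := \theta_n$, so $0 < \theta - s \leq \frac{2T}{n}$ and $s \geq T - t^n_k \geq \frac{2T}{n}$... more usefully, since $t_0 < t^n_{k+1} = T - \theta_n + \frac{2T}{n}$ we get $s = T - t_0 > \theta_n - \frac{2T}{n}$, and in particular $\theta \leq s + \frac{2T}{n} \leq s(1 + \frac{2T}{n s})$; one should keep track that $\frac{1}{\theta} \leq \frac{1}{T-t_0}$ up to constants, which is what produces the $\frac{1}{T-t_0}$ in the bound.

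The main step is to use the representation $X_{\theta} \stackrel{d}{=} X_s + \sigma\sqrt{\theta - s}\, Z$ with $Z \sim N(0,1)$ independent of $X_s$ (a Brownian increment), so that
\begin{align*}
\varepsilon_n^{\text{adj}}(t_0,x_0) = \E\big[\,\E[\,g(x_0 + X_s + \sigma\sqrt{\theta-s}\,Z) - g(x_0 + X_s)\,\big|\,X_s\,]\,\big].
\end{align*}
For part $(ii)$, where $g \in \cae$ is locally $\alpha$-Hölder, this is immediate: $|g(x_0 + X_s + \sigma\sqrt{\theta-s}Z) - g(x_0+X_s)| \leq A e^{\beta R}|\sigma\sqrt{\theta-s}Z|^\alpha$ on the event both arguments lie in $[-R,R]$; bounding the local Hölder constant by an exponential in $|x_0 + X_s|$ and $|x_0 + X_s + \sigma\sqrt{\theta-s}Z|$, taking expectations, using $\theta - s \leq \frac{2T}{n}$ and $\E|Z|^\alpha \leq 1$, and absorbing the Gaussian exponential moments via a Cameron–Martin / completing-the-square computation (this is where the $e^{4\beta^2\sigma^2 T}$ factor comes from — note $4\beta^2 = (2\beta)^2$ suggesting a crude pairing of the two exponential factors) yields the $n^{-\alpha/2}$ rate with the stated constant $2A\sigma^\alpha T^{\alpha/2}$. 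For part $(i)$ with merely exponentially bounded $g$, there is no pointwise smallness, so instead I would write the error as an integral against the difference of two Gaussian densities: with $\varphi_v$ the $N(0,v)$ density,
\begin{align*}
\varepsilon_n^{\text{adj}}(t_0,x_0) = \int_{\R} g(x_0 + y)\big(\varphi_{\sigma^2\theta}(y) - \varphi_{\sigma^2 s}(y)\big)\,dy,
\end{align*}
and bound $\int |g(x_0+y)|\,|\varphi_{\sigma^2\theta}(y) - \varphi_{\sigma^2 s}(y)|\,dy$ using $|g(x_0+y)| \leq A e^{b|x_0|}e^{b|y|}$ together with the estimate $|\varphi_{\sigma^2\theta}(y) - \varphi_{\sigma^2 s}(y)| \leq \frac{\theta - s}{2}\sup_{v \in [s,\theta]} |\partial_v \varphi_{\sigma^2 v}(y)|$; since $\partial_v \varphi_{\sigma^2 v}(y) = \frac{\sigma^2}{2}\partial_y^2 \varphi_{\sigma^2 v}(y)$ (the heat equation) and $\int e^{b|y|}|\partial_y^2 \varphi_{\sigma^2 v}(y)|\,dy = O(v^{-1}e^{b^2\sigma^2 v/2})$ after completing the square, one gets a factor $\frac{\theta-s}{s} \cdot e^{b^2\sigma^2 T} \leq \frac{2T}{n}\cdot\frac{1}{T-t_0}\cdot(\text{const})\cdot e^{b^2\sigma^2 T}$, matching the claimed $\frac{8AT}{n(T-t_0)}e^{b|x_0| + b^2\sigma^2 T}$.

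The hard part — really just a bookkeeping nuisance — is tracking the exact constants ($8$ in $(i)$, $2$ in $(ii)$, and the precise exponents $b^2$ versus $4\beta^2$ in the Gaussian factors) through the completing-the-square steps and through the replacement of $\frac{1}{\theta}$ or $\frac{1}{s}$ by $\frac{1}{T-t_0}$; the two parts differ because in $(i)$ one integrates the second $y$-derivative of a single Gaussian (a genuine smoothing estimate costing one power of the variance, hence the $\frac{1}{T-t_0}$ blow-up), whereas in $(ii)$ the Hölder regularity of $g$ supplies the smallness directly and no variance appears in the denominator, so only the Gaussian exponential moments of $|x_0 + X_s|$ and $|x_0 + X_\theta|$ need to be controlled — and doing that crudely (bounding $X_\theta$ and $X_s$ separately rather than jointly) is what inflates $b^2\sigma^2$ to $4\beta^2\sigma^2$.
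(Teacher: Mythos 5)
Your proposal is correct and takes essentially the same route as the paper: for $(i)$ the paper applies the mean value theorem to $t\mapsto u(t,x_0)$ and differentiates the Gaussian density under the integral (which, via the heat equation, is exactly your variance-derivative/second-$y$-derivative smoothing estimate, producing the factor $\frac{\theta_n-(T-t_0)}{T-t_0}\le\frac{2T}{n(T-t_0)}$ times Gaussian exponential moments), and for $(ii)$ it uses the pointwise H\"older bound $|g(u)-g(v)|\le Ae^{\beta(|u|+|v|)}|u-v|^\alpha$ followed by H\"older's inequality with $p=2/\alpha$, which is your computation with the increment $X_{\theta_n}-X_{T-t_0}$ and separate (crude) control of the two exponential moments yielding the $4\beta^2$. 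The only point to make explicit is that separating $|Z|^\alpha$ from the exponential factor in $(ii)$ still requires H\"older's inequality (the exponent involves $X_{\theta_n}$, which is not independent of $Z$); this is exactly the step the paper carries out in (\ref{caezerothbound})--(\ref{caesecondbound}).
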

\begin{proof}
$(i)$: Denote by $p_t$ the density of $X_t = \sigma W_t$ for $t > 0$, and consider the function
\begin{align*}
u(t,x_0) = \E[g(x_0{+}X_{T-t})] = \int_{\R} g(x_{0}{+}y) p_{T-t}(y) dy, \quad 0 \leq t < T.
\end{align*}
Fix $n \in 2\N$ and suppose that $t^n_k = \frac{2kT}{n}$ is the lattice point such that $t_0 \in [t^n_k, t^n_{k+1})$. If $t_0 = t^n_k$, (\ref{theta_lattice_relation}) implies that $\theta_n = T{-}t_0$, and thus $\varepsilon_{n}^{\text{adj}}(t_0,x_0) = 0$ by (\ref{erroradjustment}). Suppose then $t_0 \in (t^n_k, t^n_{k+1})$ and use the representation \eqref{newfRep} below for the function $g(x_0 + \, \cdot \,)$ in order to rewrite
\begin{align}\label{u-difference-gbv}
u(t^n_k, x_0) - u(t_0, x_0) & = \int_{\R} \Big(g(x_0{+}z \sqrt{T{-}t^n_k}) - g(x_0{+}z\sqrt{T{-}t_0})\Big)p_1(z) dz \nonumber\\
& = \int_{\R}\int_{[0,\infty)} \Big(\I_{(y{-}x_0,\infty)}(z \sqrt{T{-}t^n_k}) - \I_{(y{-}x_0, \infty)}(z\sqrt{T{-}t_0}) \Big)d\mu(y) p_1(z) dz \nonumber\\
& \quad - \int_{\R} \int_{(-\infty,0)} \Big(\I_{(-\infty,y{-}x_0]}(z \sqrt{T{-}t^n_k}) - \I_{(-\infty,y{-}x_0]}(z \sqrt{T{-}t_0}) \Big) d\mu(y) p_1(z) dz \nonumber\\
& =: I_1 - I_2.
\end{align}
Since $g$ is exponentially bounded, one may apply Fubini's theorem to rewrite
\begin{align}\label{estimate_for_I1_last_thing}
I_1 & = \int\displaylimits_{\R} \int\displaylimits_{[0,\infty)} \Big[ \I_{\big(\tfrac{y{-}x_0}{\sqrt{T-t^n_k}},\infty\big)}(z) - \I_{\big(\tfrac{y{-}x_0}{\sqrt{T{-}t_0}}, \infty\big)}(z) \Big] d \mu(y) p_1(z) dz = \int\displaylimits_{[0,\infty)} \int_{\frac{y{-}x_0}{\sqrt{T{-}t^n_k}}}^{\frac{y{-}x_0}{\sqrt{T{-}t_0}}} p_1(z) dz  d\mu(y).
\end{align}
The mean value theorem and the fact $\sqrt{T{-}t_0} < \sqrt{T{-}t^n_k}$ imply for arbitrary $y \in \R$ that
\begin{align*}
e^{\beta|y|} \Bigg| \int_{\frac{y{-}x_0}{\sqrt{T{-}t^n_k}}}^{\frac{y{-}x_0}{\sqrt{T{-}t_0}}} p_1(z) dz \Bigg| & \leq e^{\beta|x_0| + \beta|y-x_0|} p_1 \Big(\tfrac{|y-x_0|}{\sqrt{T{-}t^n_k}} \Big) \frac{|y-x_0|}{\sqrt{T-t^n_k}} \frac{\sqrt{T-t^n_k} - \sqrt{T-t_0}}{\sqrt{T-t_0}}\\
& \leq \frac{e^{\beta |x_0|}}{\sqrt{2\pi}} \Big( \sup_{z \in (0,\infty)} z e^{z\beta \sigma \sqrt{T} - z^2/2} \Big) \frac{\sqrt{T-t^n_k} - \sqrt{T-t_0}}{\sqrt{T-t_0}}\\
& \leq \frac{e^{1+ \beta |x_0| + \beta^2 \sigma^2 T}}{\sqrt{\pi}}  \frac{\sqrt{T}}{\sqrt{n(T-t_0)}}
\end{align*}
where the estimates $\sqrt{T-t^n_k} - \sqrt{T-t_0} \leq \sqrt{t_0-t^n_k} \leq \sqrt{2T/n}$ and 
$$\sup_{z \in (0,\infty)} z e^{z \beta \sigma \sqrt{T} - z^2/2} \leq \sup_{z \in (0,\infty)} e^{z(1+\beta \sigma \sqrt{T}) - z^2/2} \leq e^{(1+\beta \sigma \sqrt{T})^2/2} \leq e^{1+\beta^2\sigma^2T}$$
were applied. Consequently, it follows by \eqref{estimate_for_I1_last_thing} that
\begin{align}\label{outcome-I1}
|I_1| \leq \frac{e^{1+\beta |x_0| + \beta^2 \sigma^2 T}}{\sqrt{\pi}} \Big( \int_{[0,\infty)} e^{-\beta|y|} d|\mu|(y) \Big) \frac{\sqrt{T}}{\sqrt{n(T-t_0)}},
\end{align}
and an analogous computation for the integral $I_2$ yields
\begin{align}\label{outcome-I2}
|I_2| \leq \frac{e^{1+\beta |x_0| + \beta^2 \sigma^2 T}}{\sqrt{\pi}} \Big( \int_{(0,\infty)} e^{-\beta|y|} d|\mu|(y) \Big) \frac{\sqrt{T}}{\sqrt{n(T-t_0)}}.
\end{align}
Since $\big|\varepsilon^{\text{adj}}_{n}(t_0,x_0)\big| = \abs{u(t^n_k,x_0) - u(t_0,x_0)}$, relations (\ref{u-difference-gbv}), (\ref{outcome-I1}), and (\ref{outcome-I2}) imply the claim.

$(ii)$: Let $0 \leq k < \frac{n}{2}$ be such that $t_0 \in (t^n_k, t^n_{k+1})$ holds; the case $t_0 = t^n_k$ follows from (\ref{theta_lattice_relation}) and (\ref{erroradjustment}). H\"older's inequality implies that
\begin{align}\label{caezerothbound}
\big|\varepsilon^{\text{adj}}_{n}(t_0,x_0)\big| & \leq \E \abs{g(x_0{+}X_{T{-}t^n_k}) - g(x_0{+}X_{T-t_0})} \nonumber\\
& \leq A \E \pl{e^{\beta(|x_0| + |X_{T{-}t^n_k}| + |X_{T{-}t_0}|)} \abs{X_{T{-}t^n_k} - X_{T{-}t_0}}^{\alpha}} \nonumber\\
& \leq A \!\pr{\E \pl{e^{q \beta(|x_0| + |X_{T{-}t^n_k}| + |X_{T{-}t_0}|)}}}^{1/q} \pr{\E \abs{X_{T{-}t^n_k} - X_{T{-}t_0}}^{p\alpha}}^{1/p},
\end{align}
for some $p,q \in (1,\infty)$ with $\frac{1}{p}+\frac{1}{q} = 1$. The choice $p = \frac{2}{\alpha}$, $q = \frac{2}{2-{\alpha}}$ and the fact $\abs{t_0 - t^n_k} \leq \frac{2T}{n}$ yield
\begin{align}\label{caefirstbound}
\pr{\E \abs{X_{T{-}t^n_k} {-} X_{T{-}t_0}}^{p\alpha}}^{1/p} {\leq} \pr{\sigma^2 \E \abs{W_{T{-}t^n_k} {-} W_{T{-}t_0}}^2}^{\alpha/2} \leq \frac{\sigma^{\alpha} 2^{\alpha/2} T^{\alpha/2}}{n^{\alpha/2}}.
\end{align}
Moreover, for a standard normal random variable $Z$, H\"older's inequality implies that
\begin{align}\label{caesecondbound}
\E \pl{e^{q\beta(|x_0| + |X_{T{-}t^n_k}| + |X_{T{-}t_0}|)}} & \leq e^{q \beta|x_0|} \pr{\E \pl{e^{2q\beta \sigma \sqrt{T{-}t^n_k}|Z|}}}^{1/2}\pr{\E \pl{e^{2q\beta \sigma \sqrt{T{-}t_0}|Z|}}}^{1/2}\nonumber\\
& \leq 2 e^{q \beta|x_0| + 2q^2 \beta^2 \sigma^2 T}.
\end{align}
The claim then follows by (\ref{caezerothbound}), (\ref{caefirstbound}), and (\ref{caesecondbound}).
\end{proof}
\section{The local error}\label{local-error-section}
\subsection{Notation and definitions}
Suppose that $(h,\theta) \in (0,\infty) {\times} (0,T]$. The aim of this section is to derive an upper bound for the absolute value of the error
\begin{align}
\varepsilon^{\text{loc}}_{h,\theta}(g) := \E[g(X_{\tau_{J}})-g(X_{\TT})] \quad \label{localstaticerror}
\end{align}
as a function of $(h,\TT)$, where the function $g$ belongs to $\gbv$ or $\cae$. The random variable $J$ is given by
\begin{align}\label{Jwithoutn}
J = J(h,\theta) = \inf \set{2m: \tau_{2m} > \TT}.
\end{align}
Afterwards, upper bounds for the error (\ref{localstaticerror}) are derived in the dynamical setting, where the step size $h$ and the level $\theta$ will depend on $n$. Observe that $J$ agrees with $J_n$ defined in (\ref{defofJ}) for $(h,\theta) = (\sigma \sqrt{\smash[b]{\frac{T}{n}}}, \frac{2T}{n}\ceiling{\frac{T-t}{2T/n}})$.

Let us start by introducing the following notation:
\begin{align*}
\Z^h_o := \set{(2k{+}1)h: k \in \Z}, \quad \Z^h_e := \set{2kh: k \in \Z}
\end{align*}
($o$ refers to 'odd' and $e$ refers to 'even'); then $\Z^h = \Z^h_o \cup \Z^h_e$. In addition, we will abbreviate
\begin{align}
d_o(x) := \dist(x, \Z^h_o), \quad d_e(x) := \dist(x, \Z^h_e) = h - d_o(x), \quad x \in \R. \label{defofdode}
\end{align}
As in \cite{Walsh}, we project functions onto piecewise linear functions in order to compute the conditional
expectation $\E[g(X_{\tau_J})|\F_{\TT}]$.
\begin{definition}\label{Deflinearizationop} Define operators $\varPi_o$ and $\varPi_e$ acting on functions $u : \R \to \R$ by
\begin{itemize}
\item[] $\varPi_e u(x) := u(x)$ if $x \in \Z^h_e$ \, and \, $x \mapsto \varPi_e u(x)$ \, linear in \, $[2kh, (2k{+}2)h]$  \, $\forall k \in \Z$,
\item[] $\varPi_o u(x) := u(x)$ if $x \in \Z^h_o$ \, and \, $x \mapsto \varPi_o u(x)$ \, linear in \, $[(2k{-}1)h, (2k{+}1)h]$ \,$\forall k \in \Z$.
\end{itemize}
\end{definition}
The key ingredient in the estimation of the error $\varepsilon_{h,\TT}^{\text{loc}}(g)$ is the following result, which was proposed in \cite[Section 9]{Walsh}.
For the convenience of the reader, a sketch of the proof is given below. Recall Definition \ref{expboundeddef} for the class $\EB$, and denote by $\N_0 := \set{0,1,2, \dots}$ the
set of non-negative integers.
\begin{proclaim}\label{locexprepr} Let $(h,\theta) \in (0,\infty) {\times} (0,T]$ and define a random variable
\begin{align}\label{DefOfL}
L = \, L(h, \theta) := \sup\set{m \in \N_{0}: \tau_m < \TT}
\end{align}
($\tau_{L}$ is equal to the largest of the stopping times $\tau_0, \tau_1, \dots$ less than $\TT$). Then, given a function $g \in \EB$,
\begin{align}\label{sumofexpectationslocalerror}
\varepsilon^{\text{loc}}_{h,\theta}(g) & = \E\big[ \varPi_e g(X_{\TT}) - g(X_{\TT}) \big] + \E \big[\big(\varPi_o \varPi_e g(X_{\TT}) - \varPi_e g(X_{\TT})\big)\p(L \text{ even}|X_{\TT})\big].
\end{align}
\end{proclaim}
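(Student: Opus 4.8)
The identity is a statement about how the random walk endpoint $X_{\tau_J}$ relates, conditionally on $\F_\TT$, to the Brownian value $X_\TT$. The plan is to condition on $\F_\TT$ throughout and exploit the structure of the stopping times $\tau_k$. First I would observe that $\TT \in (\tau_L, \tau_{L+1}]$ by definition of $L$, and that on the event $\{L = m\}$ the next lattice point the walk will hit, $X_{\tau_{m+1}}$, is one of the two neighbours $X_{\tau_m} \pm h$ of the current position $X_{\tau_m}$; moreover, whether $J = L+1$ or $J = L+2$ depends on the parity of $L+1$, since $J$ is by definition the first \emph{even} index exceeding $\TT$. So I would split $J$ into the cases ``$L$ odd'' (then $L+1$ is even and $\tau_J = \tau_{L+1}$, the first lattice point reached after time $\TT$) and ``$L$ even'' (then $\tau_J = \tau_{L+2}$, i.e.\ the walk takes one more step). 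This is exactly the dichotomy that makes the indicator $\p(L\text{ even}\mid X_\TT)$ appear.

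Next I would compute $\E[g(X_{\tau_J}) \mid \F_\TT]$ using the strong Markov property of $(X_t)$ at time $\TT$ — given $\F_\TT$, the future evolution is a Brownian motion started at $X_\TT$, and $X_{\tau_{L+1}}$ is the first point of $\Z^h$ it hits. The key elementary fact is that for a Brownian motion started at a point $x \in [(2k-1)h,(2k+1)h]$ (say, between two odd lattice points), the probability it hits $(2k+1)h$ before $(2k-1)h$ is the linear interpolation weight, so $\E[\,\text{(value of a function }v\text{ at the first odd point hit)}\mid X_\TT = x] = \varPi_o v(x)$; similarly with even points and $\varPi_e$. Here, though, one must be careful: $X_\TT$ itself need not lie in $\Z^h$, and $\tau_{L+1}$ is the first hit of \emph{any} point of $\Z^h = \Z^h_o \cup \Z^h_e$, not of the even or odd sublattice alone. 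The resolution is that the first hit of $\Z^h$ from $X_\TT$ lands at an endpoint of the mesh interval $[mh,(m+1)h]$ containing $X_\TT$, and then conditionally on that first hit one iterates: if we need the value at the first \emph{even} point (case $L$ odd, i.e.\ $\tau_J = \tau_{L+1}$ is even-indexed hence $X_{\tau_J} \in \Z^h_e$... wait, the index parity and the lattice parity are distinct) — so more carefully, the walk $(X_{\tau_k})_k$ alternates between $\Z^h_e$ and $\Z^h_o$, with even-indexed $\tau_k$ giving points of whichever sublattice contains $X_{\tau_0}=0$, namely $\Z^h_e$. Hence $X_{\tau_J} \in \Z^h_e$ always (as $J$ is even), while $X_{\tau_L} \in \Z^h_e$ iff $L$ is even. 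I would therefore write $\E[g(X_{\tau_J})\mid\F_\TT]$ as: first the walk runs from $X_\TT$ to $X_{\tau_{L+1}}$ (first hit of $\Z^h$), then continues to the first point of $\Z^h_e$ at or after that — which is $X_{\tau_{L+1}}$ itself if $L+1$ is even ($L$ odd), or $X_{\tau_{L+2}}$ if $L$ even. Using the first-hit/interpolation fact twice, the inner continuation contributes $\varPi_e g$ evaluated at $X_{\tau_{L+1}}$ on $\{L\text{ even}\}$ and just $g(X_{\tau_{L+1}})=\varPi_e g(X_{\tau_{L+1}})$ on $\{L\text{ odd}\}$ (since then $X_{\tau_{L+1}}\in\Z^h_e$ where $\varPi_e g = g$); then averaging over the first hit from $X_\TT$ applies $\varPi_o$ to whatever function of $X_{\tau_{L+1}}\in\Z^h_o$ we have — but here one must track that $X_{\tau_{L+1}}\in\Z^h_o$ iff $L$ even. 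Writing $\varphi := \varPi_e g$, this gives
\begin{align*}
\E[g(X_{\tau_J})\mid\F_\TT] = \varphi(X_\TT)\,\I_{\{L\text{ odd}\}} + \varPi_o\varphi(X_\TT)\,\I_{\{L\text{ even}\}},
\end{align*}
after being careful that on $\{L\text{ odd}\}$ the first hit from $X_\TT$ lands in $\Z^h_e$ and $\varphi$ is affine there so $\E[\varphi(X_{\tau_{L+1}})\mid\F_\TT]=\varphi(X_\TT)$, whereas on $\{L\text{ even}\}$ it lands in $\Z^h_o$, followed by a step to $\Z^h_e$ returning $\varphi$, and then the interpolation of $\varphi$ between odd points is $\varPi_o\varphi$.

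Finally, taking expectations and rearranging:
\begin{align*}
\varepsilon^{\text{loc}}_{h,\TT}(g) &= \E[g(X_{\tau_J})] - \E[g(X_\TT)] = \E[\varphi(X_\TT)] - \E[g(X_\TT)] + \E\big[(\varPi_o\varphi(X_\TT) - \varphi(X_\TT))\I_{\{L\text{ even}\}}\big],
\end{align*}
and replacing $\I_{\{L\text{ even}\}}$ by $\p(L\text{ even}\mid X_\TT)$ inside the expectation (legitimate since $\varPi_o\varphi(X_\TT)-\varphi(X_\TT)$ is $\sigma(X_\TT)$-measurable) yields exactly \eqref{sumofexpectationslocalerror} with $\varphi=\varPi_e g$. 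I would also remark that the exponential bound $g\in\EB$ is used only to guarantee integrability of all the terms, so that conditioning and Fubini-type interchanges are justified; the parity bookkeeping — matching the parity of the \emph{index} $L$ with the parity of the \emph{sublattice} $\Z^h_e$ versus $\Z^h_o$ containing $X_{\tau_L}$ and $X_{\tau_{L+1}}$ — is the step most prone to sign/indexing errors and is where I expect the main care to be needed.
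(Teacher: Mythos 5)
Your proposal is correct and takes essentially the same route as the paper: the paper states the two key identities $\E[g(X_{\tau_J})\,|\,\F_\TT]=\varPi_e g(X_\TT)$ on $\{L\text{ odd}\}$ and $=\varPi_o\varPi_e g(X_\TT)$ on $\{L\text{ even}\}$, defers their proof to Walsh, and then performs exactly the rearrangement you give; you additionally supply the derivation of those identities via the parity dichotomy $J=L+1$ versus $J=L+2$ and the gambler's-ruin interpolation. One imprecision worth fixing: $X_{\tau_{L+1}}$ is \emph{not} the first point of $\Z^h$ hit after time $\TT$ (the path may return to $X_{\tau_L}\in\Z^h$ without advancing the walk); it is the exit point of the interval $(X_{\tau_L}-h,\,X_{\tau_L}+h)$, whose endpoints are the two lattice neighbours of $X_{\tau_L}$ and hence lie in the opposite sublattice. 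Since your actual computation is carried out on this $2h$-wide interval between consecutive points of a single sublattice --- which is what matches the definitions of $\varPi_e$ and $\varPi_o$ --- the parity bookkeeping and the final identities come out correctly.
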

\begin{proof}
If $g \in \EB$, then also $\varPi_e g \in \EB$ and $\varPi_o \varPi_e g \in \EB$. The expectations on the right-hand side of (\ref{sumofexpectationslocalerror}) thus exist and are finite. Using the Markov property of the process $(X_t)_{t \geq 0}$, it can be shown that
\begin{align*}
& \E \big[g(X_{\tau_{J}}) \big| \F_{\TT}\big] = \varPi_e g(X_{\TT}) \quad \, \quad \p\text{-a.s. on } \{L \text{ odd}\}, \nonumber\\
& \E \big[g(X_{\tau_{J}}) \big| \F_{\TT}\big]= \varPi_o \varPi_e g(X_{{\TT}}) \quad \p\text{-a.s. on } \{L \text{ even}\},
\end{align*}
see \cite[Section 9]{Walsh}. Consequently, since $\I_{\set{L \text{ odd}}} + \I_{\set{L \text{ even}}} = 1$ $\p$-a.s.,
\begin{align*}
\E[g(X_{\tj})] & = \E \pl{ \E \pl{ g(X_{\tj}) \big| \F_{\TT}} \I_{\set{L \text{ odd}}}}
+ \E \pl{ \E \pl{ g(X_{\tj}) \big| \F_{\TT}} \I_{\set{L \text{ even}}}}\\
& = \E \pl{ \varPi_e g(X_{\TT}) \p \pr{L \text{ odd} \big| X_{\TT}}} + \E \pl{ \varPi_o \varPi_e g(X_{\TT}) \p\pr{L \text{ even} \big| X_{\TT}}}\\
& = \E\pl{\varPi_e g(X_{\TT})} + \E \pl{\pr{\varPi_o \varPi_e g(X_{\TT}) - \varPi_e g(X_{\TT})}\p(L \text{ even}|X_{\TT})}.
\end{align*}
\end{proof}
\subsection{Evaluation of the conditional probability $\p(L \text{ even}|X_{\theta})$}\label{condit_probab_subsect}
In this subsection we derive a representation for the function
\begin{align}\label{condprobfunction}
y \mapsto \p(L \text{ even}|X_{\TT} = y)
\end{align}
based on first exit time probabilities of a Brownian bridge. This representation (\ref{PropqIII}) together with the associated bounds presented in this subsection are applied in the proof of Propositions \ref{prop_lower_bound} and \ref{jtntjn2} below.
\begin{definition}[Brownian bridge]
Let $x,y \in \R$ and $l > 0$. A Gaussian process $(B^{x,l,y}_t)_{t \in [0,l]}$ with mean and covariance functions given by
\begin{align*}
\E[B^{x,l,y}_t] & = x + \tfrac{t}{l}(y - x), \quad 0 \leq t \leq l, \\
\text{Cov}(B^{x,l,y}_s, B^{x,l,y}_t) & = s\pr{1 - \tfrac{t}{l}}, \quad 0 \leq s \leq t \leq l,
\end{align*}
is called a (generalized) Brownian bridge from $x$ to $y$ of length $l$.
\end{definition}
\begin{remark}
By comparing mean and covariance functions, it is easy to verify that a Brownian bridge $(B^{x,l,y}_t)_{t \in [0,l]}$ is equal in law with the transformed processes below:
\begin{align}
& (B^{y,l,x}_{l-t})_{t \in [0,l]} \quad & (\text{'time reversal'}) \label{bridgetimereversal}\\
& (x + B_t^{0,l,y{-}x})_{t \in [0,l]} \quad & (\text{'translation'}) \label{bridgeshift}\\
& (-B^{-x,l,-y})_{t \in [0,l]} \quad & (\text{'reflection around the } x\text{-axis'}) \label{bridgereflection}.
\end{align}
\end{remark}
A continuous version of a Brownian bridge $(B_t^{x,\TT,y})_{t \in [0,\TT]}$ can be thought as a random
function on the canonical space $(C[0,\TT], \B(C[0,\TT]), \p_{x,\TT,y})$, where $\p_{x,\TT,y}$ denotes the associated probability measure. In the following proposition we give different characterizations for the function (\ref{condprobfunction}) in terms of hitting times.
For all $c \in \R$, $a < b$, and $\omega \in C[0,\TT]$, we let
\begin{align*}
H_c(\omega) & := \inf \set{t \in [0,\TT]: \omega_t = c}, & H_{(a,b)}(\omega) & := \inf \set{t \in [0,\TT]: \omega_t \notin (a,b)},\\
\Hl_c(\omega) & := \sup \set{t \in [0,\TT]: \omega_t = c}, & \Hl_{(a,b)}(\omega) & := \sup \set{t \in [0,\TT]: \omega_t \notin (a,b)}.
\end{align*}
\begin{proclaim}\label{Propq} Let $(h,\theta) \in (0,\infty) {\times} (0,T]$. Suppose that $(B^{y/\sigma, \TT,0}_t)_{t \in [0,\TT]}$ is a Brownian bridge on a probability space $(\tilde{\Omega}, \tilde{\F}, \tilde{\p})$, and define
\begin{align}\label{defq}
q(y) = q(y,h,\theta) := \tilde{\p}( (B_t^{y/\sigma,\theta,0})_{t \in [0,\TT]} \text{ hits } \Z^{h/\sigma}_e \text{ before hitting } \Z^{h/\sigma}_o), \quad y \in \R.
\end{align}
Then, for all $k \in \Z,$
\begin{align}
(i) \quad & q(y) = \p(L \text{ even}|X_{\TT} = y), \quad y \notin \Z^h, \label{PropqI}\\
(ii) \quad & q(y) = \left\{ \begin{array}{ll}
\p_{y/\sigma,\TT,0}(H_{2kh/\sigma} < H_{(2k+1)h/\sigma}), \quad y \in (2kh,(2k{+}1)h),\\
\p_{y/\sigma,\TT,0}(H_{2kh/\sigma} < H_{(2k-1)h/\sigma}), \quad y \in ((2k{-}1)h,2kh),\\
\end{array}\right.\label{PropqII}\\
(iii) \quad & q(y) = \left\{ \begin{array}{ll}
\dfrac{d_o(y)}{h} + \dfrac{\sigma}{h} \E_{\tilde{\p}}\Big[B^{0,\theta, y/\sigma}_{\tilde{H}_{(-((2k+1)h-y)/\sigma, (y-2kh)/\sigma)}}\Big], & y \in (2kh,(2k{+}1)h),\\
\dfrac{d_o(y)}{h} - \dfrac{\sigma}{h} \E_{\tilde{\p}}\Big[B^{0,\theta, y/\sigma}_{\tilde{H}_{(-(2kh-y)/\sigma, (y-(2k-1)h)/\sigma)}}\Big], & y \in ((2k{-}1)h,2kh).
\end{array}
\right.\label{PropqIII}
\end{align}
Here $\tilde{H}_{(a,b)} = \inf \{t \in [0,\theta]: B^{0,\theta,y/\sigma}_t \notin (a,b)\}$, and $\p$ refers to the probability measure on the space $(\Omega, \F, \p)$ considered in Section \ref{settingsection}.
\end{proclaim}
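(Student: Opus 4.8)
\textbf{Proof plan for Proposition \ref{Propq}.}

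The plan is to establish the three characterizations in the order $(i)$, then $(ii)$, then $(iii)$, since each builds on the previous one. For part $(i)$, I would start from the definition of $L = L(h,\theta)$ in \eqref{DefOfL} and recall that the random walk $(X_{\tau_k})_{k \geq 0}$ visits $\Z^h_e$ precisely at even indices and $\Z^h_o$ precisely at odd indices (since $X_{\tau_0} = 0 \in \Z^h_e$ and each step changes parity). Hence $\{L \text{ even}\}$ is, on the event $\{X_\TT = y\}$ with $y \notin \Z^h$, the event that the last lattice point of $\Z^h$ visited by $(X_t)$ strictly before time $\TT$ lies in $\Z^h_e$. The key step is to translate this ``last visit before $\TT$'' statement about $(X_t)_{t \in [0,\theta]}$ conditioned on $X_\theta = y$ into a ``first hit'' statement about the time-reversed process. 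Conditionally on $X_\theta = y$, the process $(X_t)_{t \in [0,\theta]}$ is a Brownian bridge (scaled by $\sigma$) from $0$ to $y$ of length $\theta$; by the time-reversal identity \eqref{bridgetimereversal}, running it backwards from time $\theta$ gives a Brownian bridge from $y/\sigma$ to $0$. The last point of $\Z^h$ hit by the forward bridge before $\theta$ is the first point of $\Z^h$ hit by the reversed bridge after time $0$, and asking whether it is in $\Z^h_e$ versus $\Z^h_o$ is exactly the event defining $q(y)$ in \eqref{defq} (after dividing the spatial scale by $\sigma$). This gives $(i)$.

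For part $(ii)$, I would observe that if $y \in (2kh, (2k{+}1)h)$, then the two nearest lattice points are $2kh \in \Z^h_e$ and $(2k{+}1)h \in \Z^h_o$, and a continuous path starting at $y/\sigma$ must hit one of $\{2kh/\sigma, (2k{+}1)h/\sigma\}$ before it can reach any other point of $\Z^{h/\sigma}$; moreover the first point of $\Z^{h/\sigma}_e \cup \Z^{h/\sigma}_o$ that it hits is necessarily one of these two neighbors. Therefore the event ``hits $\Z^{h/\sigma}_e$ before $\Z^{h/\sigma}_o$'' coincides with ``hits $2kh/\sigma$ before $(2k{+}1)h/\sigma$'', i.e.\ $\{H_{2kh/\sigma} < H_{(2k+1)h/\sigma}\}$ under $\p_{y/\sigma,\theta,0}$. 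The case $y \in ((2k{-}1)h, 2kh)$ is symmetric, with neighbors $(2k{-}1)h \in \Z^h_o$ and $2kh \in \Z^h_e$. This is a routine neighbor-argument once $(i)$ is in place.

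Part $(iii)$ is where the real work lies and where I expect the main obstacle. Fix $y \in (2kh,(2k{+}1)h)$ and set $a := -((2k{+}1)h - y)/\sigma < 0 < b := (y - 2kh)/\sigma$, so that, using the translation identity \eqref{bridgeshift}, the bridge $(B^{y/\sigma,\theta,0}_t - y/\sigma)_{t \in [0,\theta]}$ is a bridge from $0$ to $-y/\sigma$, and the exit problem for the two neighbors becomes a first-exit problem from $(a,b)$. By \eqref{PropqII} and an optional-stopping / martingale argument applied to the bridge stopped at $\tilde{H}_{(a,b)}$, one identifies $q(y)$ with $\frac{b - \E[\,\cdot\,]}{b - a}$ type expression; since $b - a = h/\sigma$ and $b = d_e(y)/\sigma$... actually $b = (y - 2kh)/\sigma$, which is the distance from $y$ to the even neighbor, so $b = d_e(y)/\sigma = (h - d_o(y))/\sigma$ — one must track carefully which of $d_o, d_e$ appears. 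Writing $q(y) = \p(\text{exit at }b\text{-side})$ and using that the exit value $B^{0,\theta,y/\sigma}_{\tilde{H}_{(a,b)}}$ (after the translation back) takes values near $a$ or $b$, a clean linear-algebra step expresses $\p(\text{exit at }b\text{-side})$ via the expectation of the exit position: if $\Xi$ denotes the exit position of the translated bridge from $(a,b)$, then $\E[\Xi] = a\,\p(\Xi=a) + b\,\p(\Xi=b)$ together with $\p(\Xi=a)+\p(\Xi=b)=1$ gives $\p(\Xi=b) = \frac{\E[\Xi]-a}{b-a}$. Matching this against the stated formula, with $b - a = h/\sigma$, $-a = ((2k{+}1)h-y)/\sigma$, and rewriting in terms of the original (untranslated) bridge $B^{0,\theta,y/\sigma}$ evaluated at $\tilde H_{(a,b)}$, yields \eqref{PropqIII}. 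The subtlety — and the main obstacle — is handling the generalized (non-Brownian, possibly conditioned to a nonzero endpoint) bridge correctly: one cannot simply invoke the optional stopping theorem for a martingale, because the bridge is not a martingale. I would instead work with the bridge's explicit Gaussian structure, or condition on $\tilde H_{(a,b)}$ and use the strong Markov property of the bridge together with the fact that after a hitting time a bridge from $x$ to $z$ of length $\theta$ continues as a bridge of the appropriate (shorter) remaining length; alternatively, express the bridge as $B^{0,\theta,y/\sigma}_t = \hat W_t - \frac{t}{\theta}(\hat W_\theta - y/\sigma)$ for an auxiliary Brownian motion $\hat W$ and compute directly. The reflection identity \eqref{bridgereflection} then handles the second case $y \in ((2k{-}1)h, 2kh)$, accounting for the sign change in front of the expectation. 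Finally I would note that the decomposition of $[0,\theta]$-hitting times into the stated $\tilde H_{(a,b)} = \inf\{t \in [0,\theta]: B^{0,\theta,y/\sigma}_t \notin (a,b)\}$ is exactly the object appearing in the statement, so no further identification is needed.
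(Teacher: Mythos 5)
Your plan follows the paper's proof essentially step for step: $(i)$ via time reversal of the path conditioned on $X_\theta=y$, turning the last lattice point visited before $\theta$ into the first lattice point hit by the reversed bridge; $(ii)$ via the nearest-neighbor observation; and $(iii)$ via the two-point identity for the expected exit position together with the translation and reflection identities (\ref{bridgeshift}) and (\ref{bridgereflection}). The one ``obstacle'' you flag in $(iii)$ is not one: since the bridge $B^{0,\theta,z}$ terminates at $z\notin(a,b)$, by continuity it exits the open interval $(a,b)$ on $[0,\theta]$ and the exit position lies a.s.\ in $\{a,b\}$, so the relation $\E[\Xi]=a\,\p(\Xi=a)+b\,\p(\Xi=b)$ is simply the expectation of a two-valued random variable — no optional stopping, martingale property, or explicit Gaussian computation is required, and this is precisely the paper's identity (\ref{niceqrep}).
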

\begin{remark}
It is clear by (\ref{bridgereflection}) that the function $q$ is symmetric.
\end{remark}
\begin{proof}[Proof of Proposition \ref{Propq}]
Item $(ii)$ is clear. To show $(i)$, observe that if $X_{\TT}(\omega) \in (2kh, (2k{+}1)h)$ and $L(\omega)$ is even, the path $t \mapsto X_t(\omega)$ does hit $2kh$ at $\tau_{L}(\omega)$
and afterwards, i.e.~on $[\tau_{L}(\omega), \TT)$, it does not hit any other $mh$ ($m \neq 2k$) and hence stays inside $((2k{-}1)h, (2k{+}1)h)$.
Therefore, the last entry of this path into $(2kh, (2k{+}1)h)$ occurs via $2kh$, and thus
\begin{align*}
\p({L \text{ even}, X_{\TT} \in (2kh, (2k{+}1)h)})
& = \p_{0}(\sigma \omega_{\hat{H}_{(2kh,(2k+1)h)}(\omega)} = 2kh,\, \sigma \omega_{\TT} \in(2kh, (2k{+}1)h))\\
& = \p_{0}\big(\omega_{\hat{H}_{(2kh/\sigma,(2k+1)h/\sigma)}(\omega)} = \tfrac{2kh}{\sigma},\, \omega_{\TT}\in (\tfrac{2kh}{\sigma}, \tfrac{(2k{+}1)h}{\sigma})\big)\\
& = \p_{0}(\hat{H}_{2kh/\sigma} > \hat{H}_{(2k+1)h/\sigma}),
\end{align*}
where $\p_{0}$ denotes the Wiener measure on $(C[0,\TT], \B(C[0,\TT]))$. Thus, for $y \in (2kh, (2k{+}1)h)$,
\begin{align*}
\p(L \text{ even}|X_{\TT} = y) & = \p_{0,\TT,y/\sigma}(\hat{H}_{2kh/\sigma} > \hat{H}_{(2k+1)h/\sigma}) = \p_{y/\sigma, \TT,0}(H_{2kh/\sigma} < H_{(2k+1)h/\sigma}) = q(y),
\end{align*}
where we used relations (\ref{bridgetimereversal}), (\ref{PropqII}), and the fact that $\p( \, \cdot \, | X_{\TT} = y) = \p_{0,\TT,y/\sigma}$ on $(C[0,\theta],\B(C[0,\theta]))$ (see e.g. \cite[Chapter 1]{RevuzYor}). The case $y \in ((2k{-}1)h, 2kh)$ is similar.

For $(iii)$, assume $y \in ((2k{-}1)h, 2kh)$; the case $y \in (2kh, (2k{+}1)h)$ is similar. 
It is clear that whenever $z \notin (a,b)$, $a < 0 < b$, and $\tilde{H}_{(a,b)} = \inf \{t \in [0,\TT]: B_t^{0,\TT,z} \notin (a,b)\}$,
\begin{align}\label{niceqrep}
\p_{0,\TT,z}(H_a < H_b) = \frac{b}{b-a} - \frac{1}{b-a}\E_{\tilde{\p}}\Big[B^{0,\TT,z}_{\tilde{H}_{(a,b)}}\Big].
\end{align}
In addition, from (\ref{PropqII}) we deduce that
\begin{align}\label{FFIVsymmetries}
q(y) & = \p_{y/\sigma,\TT,0}(H_{2kh/\sigma} < H_{(2k-1)h/\sigma}) \nonumber\\
& = \p_{0, \TT, -y/\sigma}(H_{(2kh-y)/\sigma} < H_{((2k-1)h-y)/\sigma}) \nonumber\\
& = \p_{0, \TT, y/\sigma}(H_{(y-2kh)/\sigma} < H_{(y-(2k-1)h)/\sigma})
\end{align}
by (\ref{bridgeshift}) and (\ref{bridgereflection}). Substitute $z = \tfrac{y}{\sigma}$, $a = \frac{y-2kh}{\sigma}$, and $b = \frac{y-(2k{-}1)h}{\sigma}$. Then $z \notin (a,b)$, $a < 0 < b$, $b-a = \frac{h}{\sigma}$, and hence by (\ref{niceqrep}), (\ref{FFIVsymmetries}), and $d_o(y) = y-(2k{-}1)h$,
\begin{align*}
q(y) & = \frac{d_o(h)}{h} - \frac{\sigma}{h} \E_{\tilde{\p}}\pl{B^{0, \TT, y/\sigma}_{\tilde{H}_{((y-2kh)/\sigma,(y-(2k{-}1)h)/\sigma)}}}.
\end{align*}
\end{proof}
The probability for the Brownian motion $(W_t + y/\sigma)_{t \geq 0}$ to hit the set $\Z_e^{h/\sigma}$ before hitting the set $\Z_o^{h/\sigma}$ is equal to $d_o(y)/h$ (cf.~(\ref{defq})). As pointed out in \cite[Section 9]{Walsh}, the piecewise linear function $y \mapsto d_o(y)/h$ can be used to approximate the function $y \mapsto q(y)$ for small $h > 0$. Estimates related to this approximation, which are also applied in the proof of Lemma \ref{PropJ}, are presented in the proposition below. We denote by $p = p(\, \cdot \,, \theta)$ the density of the random variable $X_{\TT}$.
\begin{proclaim}\label{PROPqprobestimate}
Suppose that $(h,\theta) \in (0,\infty) {\times} (0,T]$ and define
\begin{align}\label{varrhon}
\varrho : \R \to \R, \quad \varrho(y) = \varrho(y, h, \theta) := q(y) - d_o(y)/h,
\end{align}
where $q = q(\, \cdot \,, h, \theta)$ was introduced in (\ref{defq}). Then $\varrho$ is symmetric, and it holds that
\begin{align*}
(i) \quad \int_{0}^h \abs{\varrho(y)} p(y) dy \leq \frac{26}{10}\frac{h}{\sigma \sqrt{\theta}} + \frac{ h^2}{\sigma^2 \theta}, \quad \quad (ii) \quad  \int_h^{\infty} \abs{\varrho(y)} p(y) dy \leq \frac{29}{10}\frac{h}{\sigma \sqrt{\theta}} + \frac{h^2}{\sigma^2 \theta}.
\end{align*}
\end{proclaim}
Proposition \ref{PROPqprobestimate} can be seen as a generalization of \cite[Corollary 3.3]{GLS} to the time-dependent setting. The proof uses certain estimates of \cite{GLS} related to the expected first hitting times of Brownian bridges \cite[Lemma 3.1 and Lemma 3.2 $(i)$]{GLS}. For the convenience of the reader, we collect those results in the lemma below using the notation of this subsection.
\begin{lemma} Let $(h,\theta) \in (0,\infty) {\times} (0,\infty)$ and suppose that $a < 0 < b$ and $y \notin (a,b)$. Then
\begin{align}
\E_{\tilde{\p}}\big[B^{0,\theta,y}_{\tilde{H}_{(a,b)}}\big] & \leq \frac{\E_{0,\theta,y}[H_{(a,b)}]}{\theta}\pr{|y| + 2(|a|\vee b)+3\sqrt{2\theta}}, \label{bridge_sampled_at_hitting_time_estimate}\\
\E_{0,\theta,y}[H_{(a, b)}] & \leq \left\{ \begin{array}{ll}
b(2|a|+y) \wedge \theta, & y \geq b,\\
|a|(2b+|y|) \wedge \theta, & y \leq a,
\end{array}
\right. \label{bridge_hitting_upper_estimate} \\
\E_{0,\theta,y}[H_{(-h,h)}] & = h \int_{0}^{\theta} \frac{\gamma_{\theta - t}(0,y)}{\gamma_{\theta}(0,y)}\frac{F(h/\sqrt{t})}{\sqrt{2\pi t}} dt, \quad y \notin (-h,h), \label{symmetric_bridge_hitting_eq}
\end{align}
where
\begin{align}\label{gamma_ja_iso_F}
\gamma_t(x,y) := \frac{1}{\sqrt{2\pi t}}e^{-(x-y)^2/2t} \quad \text{ and } \quad F(x) := \sum_{m = -\infty}^{\infty} (-1)^m e^{-2m^2 x^2}.
\end{align}
\end{lemma}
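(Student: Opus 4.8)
The three displayed assertions are, in the order listed, \cite[Lemma 3.1]{GLS}, \cite[Lemma 3.2(i)]{GLS} and \cite[Theorem 2.6(i)]{GLS}, rewritten in the notation of this subsection. Accordingly, the plan is not to reprove them from scratch but to make the translation between the two sets of conventions explicit and then quote them. The dictionary has three moving parts. First, the $\sigma$-scaling: the process of interest here is $X=\sigma W$, so that a bridge ``of length $\theta$'' from $x$ to $y$ is, after the usual Brownian space--time rescaling, a standard bridge of length $\theta/\sigma^2$ from $x/\sigma$ to $y/\sigma$ — which is exactly why the hitting sets $\Z^{h/\sigma}_e,\Z^{h/\sigma}_o$ and the argument $y/\sigma$ appear in \eqref{defq} and in Proposition \ref{Propq}, and why the right-hand sides of \eqref{bridge_sampled_at_hitting_time_estimate}--\eqref{symmetric_bridge_hitting_eq} are stated for the $\sigma$-free normalisation. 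Second, the orientation of the bridge: \cite{GLS} normalises bridges to start at $0$, whereas several of our identities reach that form only after the time reversal \eqref{bridgetimereversal} and the reflection \eqref{bridgereflection}. Third, the sign convention $a<0<b$ with $y\notin(a,b)$, which guarantees that the bridge starts strictly inside $(a,b)$ and must leave it by time $\theta$. Once these are pinned down the statements of \cite{GLS} apply verbatim, and there is nothing further to prove.

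For orientation I would also record, briefly and with the appropriate hedging, the mechanism behind each item as it is argued in \cite{GLS}. Identity \eqref{symmetric_bridge_hitting_eq} is the one genuinely computational statement: writing the conditioned exit time as an integral of the killed transition kernel of Brownian motion on $(-h,h)$ and applying the reflection (image) series for exit from a symmetric interval, the series telescopes into the alternating theta-type sum $F(h/\sqrt t)$, while the Bayes reweighting of the bridge relative to Brownian motion contributes the ratio $\gamma_{\theta-t}(0,y)/\gamma_\theta(0,y)$. Estimate \eqref{bridge_hitting_upper_estimate} combines the trivial bound $H_{(a,b)}\le\theta$ (valid since $y\notin(a,b)$) with a one-sided comparison: for $y\ge b$ one has $H_{(a,b)}=H_a\wedge H_b\le H_b$, and the expected time for a bridge pulled toward $y\ge b$ to reach the level $b$ is bounded by $4b(|a|+y/2)$ via the explicit bridge hitting-time density; the case $y\le a$ is symmetric. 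Finally, \eqref{bridge_sampled_at_hitting_time_estimate} follows from the bridge SDE $dB_t=\tfrac{y-B_t}{\theta-t}\,dt+dW_t$: since $B_0=0\in(a,b)$ the bridge exits at a boundary point, so optional stopping at $\tilde{H}_{(a,b)}$ gives $\E_{\tilde{\p}}\big[B^{0,\theta,y}_{\tilde{H}_{(a,b)}}\big]=\E_{\tilde{\p}}\big[\int_0^{\tilde{H}_{(a,b)}}\tfrac{y-B_s}{\theta-s}\,ds\big]$, and bounding the integrand — crudely by $|y|+(|a|\vee b)$ away from $s=\theta$, and more carefully near $s=\theta$, where the singular weight $(\theta-s)^{-1}$ is compensated by $|y-B_s|$ being small — against $\E_{0,\theta,y}[H_{(a,b)}]/\theta$ produces the geometric factor $|y|+2(|a|\vee b)+3\sqrt{2\theta}$.

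Since all three items are quoted results, I do not expect a substantive obstacle; the only thing that requires care — and the thing that matters, because Proposition \ref{PROPqprobestimate} uses the constants exactly as written — is the bookkeeping. One must keep the roles of $h$ versus $h/\sigma$, the bridge length $\theta$, the endpoints $a,b$, and the transformations \eqref{bridgetimereversal}--\eqref{bridgereflection} consistent, so that, for instance, the $3\sqrt{2\theta}$ in \eqref{bridge_sampled_at_hitting_time_estimate} and the $4b(|a|+y/2)\wedge\theta$ in \eqref{bridge_hitting_upper_estimate} emerge with precisely the stated numerical constants and in a form directly substitutable into the proof of Proposition \ref{PROPqprobestimate}.
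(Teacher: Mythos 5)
Your proposal matches the paper exactly: the paper provides no proof of this lemma, stating only that it collects \cite[Lemma 3.1, Lemma 3.2 $(i)$, and Theorem 2.6 $(i)$]{GLS} in the notation of the subsection, which is precisely your citation-plus-translation argument. The additional sketches of the mechanisms behind each item are harmless extra context but not required, since the lemma is by construction a quotation of external results.
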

\begin{proof}[Proof of Proposition \ref{PROPqprobestimate}]
Recall the functions $q$, $d_o, d_e$, and $\varrho$, given by (\ref{defq}), (\ref{defofdode}), and (\ref{varrhon}), respectively.
The function $\varrho$ is symmetric as a linear combination of the symmetric functions  $q$ and $d_o$.

To show $(i)$, we use (\ref{PropqIII}) and (\ref{bridge_sampled_at_hitting_time_estimate}) to obtain
\begin{align}\label{absrhooupperbdd}
\abs{\varrho(y)} \leq \left\{ \begin{array}{ll}
\dfrac{|y| + 2h + 3 \sigma\sqrt{2\TT}}{\TT h} \E_{0,\theta,y/\sigma}[H_{(-((2k+1)h-y)/\sigma, (y-2kh)/\sigma)}], & y \in (2kh,(2k{+}1)h),\\
\dfrac{|y| + 2h + 3 \sigma \sqrt{2\TT}}{\TT h} \E_{0,\theta,y/\sigma}[H_{(-(2kh-y)/\sigma, (y-(2k-1)h)/\sigma)}], & y \in ((2k{-}1)h,2kh).
\end{array}
\right.
\end{align}
In addition, (\ref{bridge_hitting_upper_estimate}) implies that for $y \in (0,h)$,
\begin{align}\label{nollahooestimate}
\E_{0,\theta,y/\sigma}[H_{(-(h{-}y)/\sigma, y/\sigma)}] & \leq \frac{y}{\sigma}\Big(\frac{2|h-y|}{\sigma} + \frac{y}{\sigma}\Big) = \frac{y}{\sigma^2}\big(2(h-y)+y) \leq \frac{h^2}{\sigma^2}.
\end{align}
Consequently, by (\ref{absrhooupperbdd}) (with $k = 0$) and (\ref{nollahooestimate}),
\begin{align*}
\int_0^h \abs{\varrho(y)} p(y) dy & \leq \int_{0}^h \frac{y + 2h + 3 \sigma \sqrt{2 \TT}}{\theta h} \E_{0,\theta,y/\sigma}[H_{(-(h{-}y)/\sigma, y/\sigma)}] p(y) dy\\
& \leq \frac{h^2}{\sigma^2}\pl{\int_{0}^h \frac{y}{\TT h} p(y) dy + \frac{2h + 3 \sigma \sqrt{2\TT}}{\TT h} \int_{0}^h p(y) dy}\\
& \leq \frac{h}{\sigma \sqrt{\theta}}\int_{0}^\infty \frac{y}{\sigma \sqrt{\theta}} p(y) dy +  \frac{2h^2 + 3 h \sigma \sqrt{2 \TT}}{\sigma^2 \TT}\int_{0}^\infty p(y) dy\\
& \leq \frac{1}{\sqrt{2\pi}}\frac{h}{\sigma \sqrt{\theta}} + \frac{2h^2 + 3h\sigma \sqrt{2 \theta}}{2\sigma^2 \theta}\\
& \leq \frac{26}{10}\frac{h}{\sigma \sqrt{\theta}} + \frac{h^2}{\sigma^2 \theta}
\end{align*}
since $\frac{1}{\sqrt{2\pi}} + \frac{3}{\sqrt{2}} \leq \frac{26}{10}$. This proves item $(i)$.

$(ii)$: By extending the exit intervals, we get for all $k \in \Z$ that
\begin{align}\label{all-the-expectations}
\E_{0,\theta,y/\sigma}[H_{(-h/\sigma, h/\sigma)}] \geq \left\{ \begin{array}{ll}
\E_{0,\theta,y/\sigma}[H_{(-((2k+1)h-y)/\sigma, (y-2kh)/\sigma)}], & \quad y \in (2kh, (2k{+}1)h),\\
\E_{0,\theta,y/\sigma}[H_{(-(2kh-y)/\sigma, (y-(2k-1)h)/\sigma)}], & \quad y \in ((2k{-}1)h,2kh)).
\end{array}
\right.
\end{align}
In terms of the functions $\gamma$ and $F$ defined in (\ref{gamma_ja_iso_F}), we let
\begin{align}\label{def_of_C_the_integral_monster}
C(\theta, \tfrac{h}{\sigma}, \tfrac{y}{\sigma}) := \int_{0}^{\sigma^2 \theta/h^2} \!\! \frac{\gamma_{\sigma^2 \theta - h^2 u}(0,y)}{\gamma_{\sigma^2 \theta}(0,y)} \frac{F(1/\sqrt{u})}{\sqrt{2\pi u}} = \frac{\sigma^2}{h^2} \E_{0,\theta,y/\sigma}[H_{(-h/\sigma, h/\sigma)}],
\end{align}
recall (\ref{symmetric_bridge_hitting_eq}). Hence, by (\ref{absrhooupperbdd}) and (\ref{all-the-expectations}),
\begin{align}\label{exitJ0}
\int_{h}^{\infty} \abs{\varrho(y)} p(y) dy & \leq \frac{h^2}{\sigma^2}\int_{h}^{\infty} \frac{y + 2h + 3\sigma \sqrt{2\theta}}{\theta h} C(\theta, \tfrac{h}{\sigma}, \tfrac{y}{\sigma}) p(y)dy.
\end{align}
Using the knowledge that $u \mapsto \frac{F(1/\sqrt{u})}{\sqrt{2\pi u}}$ is a p.d.f. on $(0,\infty)$ (see \cite[Proposition 2.8]{GLS}), a standard computation yields
$C(\theta, \tfrac{h}{\sigma}, \tfrac{y}{\sigma}) \leq 1 \vee \frac{\sigma \sqrt{\theta}}{y}$, and thus
\begin{align}\label{exitJ1}
\frac{h^2}{\sigma^2} \int_{h}^{\infty} \!\!\frac{y}{\theta h} \Big(1 \vee \frac{\sigma \sqrt{\theta}}{y}\Big)p(y) dy & \leq \frac{h}{\sigma \sqrt{\theta}} \int_{0}^{\infty} \!\! \pr{1 \vee u} \gamma_1(0,u) du \leq \frac{h}{\sigma \sqrt{\theta}} \Big(\frac{1}{2} + \frac{e^{-1/2}}{\sqrt{2\pi}}\Big).
\end{align}
In addition, by (\ref{def_of_C_the_integral_monster}) and by the fact that $p = \gamma_{\sigma^2 \theta}(0,\, \cdot \,)$,
\begin{align*}
\int_{h}^{\infty} C(\theta, \tfrac{h}{\sigma}, \tfrac{y}{\sigma}) p(y) dy = \int_h^{\infty} \int_{0}^{\sigma^2 \theta/h^2} \!\! \!\!\gamma_{\sigma^2 \theta - h^2 u}(0,y) \frac{F(1/\sqrt{u})}{\sqrt{2\pi u}} du dy & \leq \frac{1}{2} \int_{0}^{\sigma^2 \theta/h^2} \!\! \frac{F(1/\sqrt{u})}{\sqrt{2\pi u}} du \leq \frac{1}{2}
\end{align*}
since the function $u \mapsto \frac{F(1/\sqrt{u})}{\sqrt{2\pi u}}$ integrates to one over the interval $(0,\infty)$. Consequently,
\begin{align}\label{exitJ2}
\frac{h^2}{\sigma^2} \cdot \frac{2h + 3 \sigma \sqrt{2\TT}}{\TT h} \int_{h}^{\infty} C(\theta, \tfrac{h}{\sigma}, \tfrac{y}{\sigma}) p(y) dy \leq \frac{h^2}{\sigma^2 \theta} + \frac{3}{\sqrt{2}}\frac{h}{\sigma \sqrt{\theta}}.
\end{align}
The claim then follows by applying (\ref{exitJ1}) and (\ref{exitJ2}) to the right-hand side of (\ref{exitJ0}) and by observing that $\frac{1}{2} + \frac{e^{-1/2}}{\sqrt{2\pi}}+\frac{3}{\sqrt{2}} \leq \frac{29}{10}$. 
\end{proof}
\subsection{The local error for $g \in \gbv$}
The estimation of the local error for the class $\gbv$ relies on the following observation: If $g \in \gbv$ is given by (\ref{GBVdefrep}) and if $g^{x_0} := g(x_0{+}\, \cdot \,)$ for some $x_0 \in \R$, then
\begin{align}\label{newfRep}
g^{x_0}(x) = c+\!\!\!\int\displaylimits_{[0,\infty)} \!\!\!\I_{(\yx, \infty)}(x) d\mu(y)-\!\!\!\int\displaylimits_{(-\infty,0)} \!\!\! \I_{(-\infty, \yx]}(x) d\mu(y)+\sum_{i=1}^{\infty}\alpha_i \IN{x_i{-}x_0}(x).
\end{align}
Using the representation (\ref{newfRep}) and linearity, the estimation of the error $\varepsilon^{\text{loc}}_{h,\theta}(g^{x_0})$ essentially reduces to the estimation of integrals, where the integrands consist of indicator functions or their linear approximations given by the operators $\varPi_e$ and $\varPi_o$ (introduced in Definition \ref{Deflinearizationop}). The following proposition enables us to interchange the order of integration or summation with the application of these operators.

{Recall that $p = p(\, \cdot \,, \theta)$ is the density of $X_{\TT}$ and that $q = q(\, \cdot \,, h,\TT)$ is the function defined in (\ref{defq})}.

\begin{proclaim}\label{Propgbvlinearized}
Suppose that $(h,\theta) \in (0,\infty) {\times} (0,T]$ and that $g \in \gbv$ admits the representation (\ref{GBVdefrep}). Then, for all $x_0 \in \R$,
\begin{align*}
(i) \quad \varPi_e g^{x_0} (x) = c & + \int\displaylimits_{[0,\infty)} \varPi_e \I_{(y{-}x_0,\infty)}(x) d\mu(y) - \int\displaylimits_{(-\infty,0)} \varPi_e \I_{(-\infty,y{-}x_0]}(x) d\mu(y) \nonumber\\
& + \sum_{i \in \N: x_i{-}x_0 \in \Z^h_e} \alpha_i \varPi_e \IN{x_i - x_0}(x), \quad x \in \R,\\ 
(ii) \quad\varPi_o \varPi_e g^{x_0}(x) = c & + \int\displaylimits_{[0,\infty)} \varPi_o \varPi_e \I_{(y{-}x_0,\infty)}(x) d\mu(y) - \int\displaylimits_{(-\infty,0)} \varPi_o \varPi_e \I_{(-\infty,y{-}x_0]}(x) d\mu(y) \nonumber\\
& + \sum_{i \in \N: x_i{-}x_0 \in \Z^h_e} \alpha_i \varPi_o \varPi_e \IN{x_i - x_0}(x), \quad x \in \R.
\end{align*}
\end{proclaim}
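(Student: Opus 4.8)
The plan is to exploit the linearity of the operators $\varPi_e$ and $\varPi_o\varPi_e$ together with the representation (\ref{newfRep}) for $g^{x_0}$, reducing the statement to a justification of interchanging these operators with the integrals $\int_{[0,\infty)}\cdots d\mu(y)$, $\int_{(-\infty,0)}\cdots d\mu(y)$, and the series $\sum_i \alpha_i(\cdots)$. First I would fix $x\in\R$ and recall that, by Definition \ref{Deflinearizationop}, the value $\varPi_e u(x)$ depends on $u$ only through its values at the two lattice points of $\Z^h_e$ adjacent to $x$ (say $2kh$ and $(2k{+}2)h$), via the affine interpolation $\varPi_e u(x) = \frac{(2k+2)h-x}{2h}u(2kh) + \frac{x-2kh}{2h}u((2k{+}2)h)$; likewise $\varPi_o\varPi_e u(x)$ is an affine combination of $\varPi_e u$ evaluated at the two adjacent points of $\Z^h_o$, hence an affine combination of the values of $u$ at finitely many (at most four) points of $\Z^h_e$. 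Thus both $\varPi_e$ and $\varPi_o\varPi_e$ act, for each fixed $x$, as bounded (indeed, convex-combination) linear functionals of the restriction of the function to a fixed finite set of lattice points.

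Next I would apply this pointwise evaluation to $g^{x_0}$ written in the form (\ref{newfRep}). Since evaluation at the relevant finite set of lattice points is linear and, for the two integral terms, commutes with $\mu$-integration by Fubini's theorem — the integrability being guaranteed because the integrand $\I_{(y-x_0,\infty)}(x)$ (resp. $\I_{(-\infty,y-x_0]}(x)$) is bounded by $1$ and $\mu$ restricted to compact sets is finite, while for each fixed lattice point only a bounded-in-$y$ half-line contributes — and, for the jump term, commutes with the absolutely convergent sum $\sum_i \alpha_i \IN{x_i-x_0}$, one gets
\begin{align*}
\varPi_e g^{x_0}(x) = c + \int_{[0,\infty)} \varPi_e \I_{(y-x_0,\infty)}(x)\,d\mu(y) - \int_{(-\infty,0)} \varPi_e \I_{(-\infty,y-x_0]}(x)\,d\mu(y) + \sum_{i=1}^\infty \alpha_i\, \varPi_e \IN{x_i-x_0}(x).
\end{align*}
The final simplification in $(i)$ is to observe that $\varPi_e\IN{z}\equiv 0$ whenever $z\notin\Z^h_e$: indeed, $\IN{z}$ vanishes at every point of $\Z^h_e$ in that case, so its even-linearization is identically zero. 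Hence only the indices $i$ with $x_i - x_0 \in \Z^h_e$ survive, yielding the stated formula. For $(ii)$ one applies $\varPi_o$ to the identity just obtained; by the same finite-linear-functional/Fubini argument $\varPi_o$ passes through the integrals and the sum, and the reduced index set is already $\{i : x_i-x_0\in\Z^h_e\}$ (applying $\varPi_o$ afterwards does not reintroduce other terms since those are already zero).

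The main obstacle is the careful bookkeeping needed to justify the interchange of $\varPi_e$, $\varPi_o\varPi_e$ with the $\mu$-integrals: one must confirm that the exceptional set where $y-x_0$ coincides with a relevant lattice point is $\mu$-negligible or handled by convention, and that the dominated/Fubini hypotheses genuinely hold uniformly in the finite lattice points involved (using $|\mu|$ finite on compacts plus the exponential-integrability condition (\ref{bump-function-condition}) only insofar as it ensures everything is finite — local finiteness already suffices for the pointwise identity). A minor subtlety is that $\varPi_e\I_{(y-x_0,\infty)}$ and $\varPi_e\I_{(-\infty,y-x_0]}$ need not be indicator-type functions but are still bounded by $1$, so the same domination works. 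Once the interchange is licensed, the proof is essentially the three observations above combined with the nullity of $\varPi_e\IN{z}$ for $z\notin\Z^h_e$.
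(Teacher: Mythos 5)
Your argument is correct and follows essentially the same route as the paper, which only sketches the proof via the representation (\ref{newfRep}), linearity of $\varPi_e$, $\varPi_o$, and the $\mu$-integral, and the vanishing of $\varPi_e\IN{\xi}$ for $\xi\notin\Z^h_e$ (relation (\ref{DiracRep})). Your observation that, for fixed $x$, each operator is a convex combination of evaluations at finitely many even lattice points supplies exactly the justification the paper leaves implicit; the interchange is then just linearity of the integral over a finite affine combination (no genuine Fubini issue arises).
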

\begin{proof}[Idea of the proof]
Items $(i)$--$(ii)$ follow by using the representation (\ref{newfRep}), linearity of the operations\\$f \mapsto \varPi_e f$, $f \mapsto \varPi_o f$, and $f \mapsto \int f d|\mu|$, and relation (\ref{DiracRep}).
\end{proof}
\begin{proclaim}\label{localerrorrepresentationPROP}
Let $(h,\theta) \in (0,\infty) {\times} (0,T]$. Suppose that $g \in \gbv$ admits the representation (\ref{GBVdefrep}) and that $\beta \geq 0$ is as in (\ref{bump-function-condition}). Then, for all $x_0 \in \R$, 
\begin{align}\label{localerrorboundforgbv}
\big|\E[g(x_0{+}X_{\tau_{J}})-g(x_0{+}X_{\theta})]\big| & \leq \frac{7}{\sqrt{2\pi}} \frac{h}{\sigma \sqrt{\TT}} e^{3\beta h + \beta\abs{x_0} + \beta^2 \sigma^2 T/2} \nonumber \\
& \quad \quad \times \bigg(\int_{\R} e^{-\beta\abs{y}} d|\mu|(y) + \!\!\!\sum_{i \in \N: x_i{-}x_0 \in \Z^h_e} \abs{\alpha_i} e^{-\beta\abs{x_i}}\bigg).
\end{align}
\end{proclaim}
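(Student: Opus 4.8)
The plan is to combine the exponential representation of the local error from Proposition \ref{locexprepr} with the decomposition of $g^{x_0}$ supplied by Proposition \ref{Propgbvlinearized}, and then estimate the resulting integrals against the density $p$ using the bound on $\varrho = q - d_o/h$ from Proposition \ref{PROPqprobestimate}$(iii)$. Concretely, by Proposition \ref{locexprepr} applied to $g^{x_0} \in \EB$ (recall $\gbv \subset \EB$),
\begin{align*}
\varepsilon^{\text{loc}}_{h,\theta}(g^{x_0}) = \E[\varPi_e g^{x_0}(X_\theta) - g^{x_0}(X_\theta)] + \E\big[(\varPi_o\varPi_e g^{x_0}(X_\theta) - \varPi_e g^{x_0}(X_\theta))\,\p(L\text{ even}|X_\theta)\big].
\end{align*}
I would first rewrite the second term using $\p(L\text{ even}|X_\theta = y) = q(y) = d_o(y)/h + \varrho(y)$ from Proposition \ref{Propq}$(i)$ and Proposition \ref{PROPqprobestimate}; the piece involving $d_o/h$ can be absorbed (it is exactly the "correct" weight for the piecewise-linear interpolation, so $\varPi_e g^{x_0} + \frac{d_o}{h}(\varPi_o\varPi_e g^{x_0} - \varPi_e g^{x_0})$ is itself a natural interpolation), leaving contributions controlled either by $\E|\varPi_e g^{x_0}(X_\theta) - g^{x_0}(X_\theta)|$-type quantities or by $\E[|\varPi_o\varPi_e g^{x_0}(X_\theta) - \varPi_e g^{x_0}(X_\theta)|\,|\varrho(X_\theta)|]$.

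Next I would invoke Proposition \ref{Propgbvlinearized} to expand $\varPi_e g^{x_0}$ and $\varPi_o\varPi_e g^{x_0}$ termwise. By linearity and Fubini (justified by the exponential integrability condition (\ref{bump-function-condition}) together with the Gaussian tails of $p$), the error splits into an integral over $[0,\infty)$ against $d\mu$, an integral over $(-\infty,0)$ against $d\mu$, and a sum over $\{i : x_i - x_0 \in \Z^h_e\}$, where in each case the integrand/summand is the local-error functional applied to a single indicator $\I_{(y-x_0,\infty)}$, $\I_{(-\infty,y-x_0]}$, or $\IN{x_i-x_0}$. For a single jump or step function, the difference $\varPi_e \I - \I$ is supported on one interval of length $2h$ (the $\Z^h_e$-cell containing the discontinuity), and likewise $\varPi_o\varPi_e \I - \varPi_e \I$ is supported on a cell of length $2h$; its sup-norm is bounded by $1$. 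Hence each single-indicator contribution is bounded by
\begin{align*}
\int_{(2m-1)h}^{(2m+1)h} p(y)\,dy \quad \text{or} \quad \int_{(2m-1)h}^{(2m+1)h} |\varrho(y)|\,p(y)\,dy,
\end{align*}
for the appropriate cell index $m$. The first integral over a $2h$-cell is $\le \frac{2h}{\sqrt{2\pi}\,\sigma\sqrt\theta}$ (sup of the Gaussian density times length), giving the $\frac{7}{\sqrt{2\pi}}$-type constant after tracking how many such terms appear (I expect a factor like $3$ or $4$ from the $\varPi_e$ and $\varPi_o\varPi_e$ differences combined), and the second is exactly $C^{(\ref{SuperCoef})}_{\beta,h,\theta}\frac{h}{\sigma\sqrt\theta}$ by Proposition \ref{PROPqprobestimate}$(iii)$, supplying the $\frac{3}{2}C^{(\ref{SuperCoef})}_{\beta,h,\theta}$ term.

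The remaining bookkeeping is to reinstate the exponential weights: shifting the center from $0$ to $x_0$ and from a cell to the origin costs factors $e^{\beta|x_0|}$ and $e^{3\beta h}$ (the cell has half-width $h$, but the relevant shifts in $d_o$, $\varPi_o$, $\varPi_e$ stack up to at most $3h$), which is why the bound carries $e^{3\beta h + \beta|x_0|}$ and why $d|\mu|$ and $|\alpha_i|$ appear weighted by $e^{-\beta|y|}$, $e^{-\beta|x_i|}$ so that (\ref{bump-function-condition}) makes the total finite. I would then collect: the $d\mu$-integrals over $[0,\infty)$ and $(-\infty,0)$ recombine into $\int_\R e^{-\beta|y|}d|\mu|(y)$, and the sum is $\sum_{i:\,x_i-x_0\in\Z^h_e}|\alpha_i|e^{-\beta|x_i|}$, matching the right-hand side of (\ref{localerrorboundforgbv}). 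The main obstacle I anticipate is not any single inequality but the careful accounting of (a) which cells carry the discontinuity for a given $y$ and how the exponential weight at that cell relates to $e^{-\beta|y|}$ uniformly in $y$, and (b) getting the precise numerical constants $\tfrac{7}{\sqrt{2\pi}}$ and $\tfrac{3}{2}$ right by counting exactly how many $2h$-cell integrals of each type ($p$ alone versus $|\varrho|p$) are produced when both $\varPi_e g - g$ and $(\varPi_o\varPi_e g - \varPi_e g)q$ are expanded — the Fubini/interchange step of Proposition \ref{Propgbvlinearized} is the conceptual key, and the rest is a disciplined but routine estimate.
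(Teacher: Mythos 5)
Your proposal follows essentially the same route as the paper's proof: Proposition \ref{locexprepr} plus the termwise expansion of Proposition \ref{Propgbvlinearized} yields six single-indicator contributions, each estimated by splitting $q = d_o/h + \varrho$ and invoking the Gaussian sup bound (\ref{exponentialsupremum}) for the $d_o/h$-pieces and Proposition \ref{PROPqprobestimate}$(iii)$ for the $|\varrho|p$-pieces, with the exponential weights $e^{3\beta h + \beta|x_0|}$ and $e^{-\beta|y|}$, $e^{-\beta|x_i|}$ tracked exactly as you describe. The only slip is that $\varPi_o\varPi_e\I_{(y-x_0,\infty)} - \varPi_e\I_{(y-x_0,\infty)}$ is supported on an interval of length $4h$ (not $2h$; see (\ref{form})) with sup norm $d_o/(4h)\le 1/4$, but this only affects the numerical constants you already flagged as deferred bookkeeping.
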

\begin{proof}
For given $x_0 \in \R$, we apply (\ref{sumofexpectationslocalerror}) for the function $g(x_0{+}\, \cdot \,)$. By Proposition \ref{Propgbvlinearized} and by the relation $\p(L \text{ even}|X_{\TT} = x) = q(x)$ (Leb-a.e.), we may decompose the expectation on the left-hand side of (\ref{localerrorboundforgbv}) in the following way:
\begin{align*}
& \E[g(x_0{+}X_{\tau_{J}})-g(x_0{+}X_{\theta})]\\
& = \int\displaylimits_{\R} \int\displaylimits_{[0,\infty)} \pl{\varPi_e \I_{(\yx, \infty)}(x) - \I_{(\yx,\infty)}(x)} d\mu(y) p(x) dx\\
& \quad \quad + \int\displaylimits_{\R} \int\displaylimits_{(-\infty, 0)} \pl{\varPi_e \I_{(-\infty, \yx]}(x) - \I_{(-\infty,\yx]}(x)}d\mu(y) p(x) dx\\
& \quad \quad + \int\displaylimits_{\R} \int\displaylimits_{[0,\infty)} \pl{\varPi_o \varPi_e \I_{(y-x_0, \infty)}(x) - \varPi_e \I_{(y-x_0,\infty)}(x)} d\mu(y) q(x) p(x) dx\\
& \quad \quad + \int\displaylimits_{\R} \int\displaylimits_{(-\infty, 0)} \pl{\varPi_o \varPi_e \I_{(-\infty, \yx]}(x) - \varPi_e \I_{(-\infty,\yx]}(x)} d\mu(y) q(x) p(x) dx\\
& \quad \quad + \int\displaylimits_{\R} \sum_{i \in \N: x_i{-}x_0 \in \Z^h_e} \alpha_i \varPi_e \IN{x_i {-} x_0}(x)p(x)dx\\
& \quad \quad + \int\displaylimits_{\R} \sum_{i \in \N: x_i{-}x_0 \in \Z^h_e} \alpha_i  \pl{\varPi_o \varPi_e \IN{x_i{-}x_0}(x) - \varPi_e \IN{x_i {-} x_0}(x)} q(x)p(x) dx\\
& =: E^{(1)} + E^{(2)} + E^{(3)} +E^{(4)}+E^{(5)}+E^{(6)}.
\end{align*}
We will derive upper estimates for the quantities $|E^{(i)}|, 1 \leq i \leq 6,$ in the following steps.\\
\textbf{Step 1: $E^{(1)}$ and $E^{(2)}$.}
Suppose that $\yx \in [2kh, (2k{+}2)h)$ for some $k \in \Z$. Then
$$\abs{\varPi_e \I_{(\yx,\infty)}(x) - \I_{(\yx,\infty)}(x)} \leq \I_{[2kh,(2k{+}2)h)}(x),$$ 
and since for each $x \in [2kh, (2k{+}2)h)$ it holds that $\abs{y} \leq 2h + \abs{x_0} + \abs{x}$, we have
\begin{align*}
& e^{\beta \abs{y}} \int_{\R} \abs{\varPi_e \I_{(\yx,\infty)}(x) - \I_{(\yx,\infty)}(x)}p(x)dx\\
& \leq e^{2\beta h + \beta \abs{x_0}} \int_{\R} e^{\beta \abs{x}} \abs{\varPi_e \I_{(\yx,\infty)}(x) - \I_{(\yx,\infty)}(x)} p(x)dx\\
& \leq e^{2\beta h + \beta \abs{x_0}} \int_{2kh}^{(2k{+}2)h} e^{\beta\abs{x}} p(x) dx\\
& \leq \frac{2}{\sqrt{2\pi}}e^{2\beta h + \beta \abs{x_0} + \beta^2 \sigma^2 T/2} \frac{h}{\sigma \sqrt{\TT}}.
\end{align*}
Consequently, by Fubini's theorem,
\begin{align}\label{EEE1bound}
\big| E^{(1)} \big| & \leq \int\displaylimits_{[0,\infty)} e^{-\beta\abs{y}} \Big(e^{\beta\abs{y}} \int\displaylimits_{\R} \abs{\varPi_e \I_{(\yx,\infty)}(x) - \I_{(\yx,\infty)}(x)}p(x)dx\Big) d|\mu|(y) \nonumber\\
& \leq \frac{h}{\sigma \sqrt{\TT}}  \frac{2}{\sqrt{2\pi}}e^{2\beta h + \beta \abs{x_0} + \beta^2 \sigma^2 T/2} \int_{[0,\infty)} e^{-\beta\abs{y}} d|\mu|(y).
\end{align}
In fact, it also holds that
\begin{align}\label{EEE2bound}
\big|E^{(2)}\big| \leq \frac{h}{\sigma \sqrt{\TT}}  \frac{2}{\sqrt{2\pi}}e^{2\beta h + \beta \abs{x_0} + \beta^2 \sigma^2 T/2} \int_{(-\infty, 0)} e^{-\beta\abs{y}} d|\mu|(y)
\end{align}
since $\abs{\varPi_e \I_{(-\infty,\yx]}(x) - \I_{(-\infty,\yx]}(x)} = \abs{ \varPi_e \I_{(\yx,\infty)}(x) - \I_{(\yx,\infty)}(x)}$ for all $x \in \R$, which is a direct consequence of the relation
\begin{align}\label{reversionLin}
\varPi_e \I_{(-\infty, r]} =  1 - \varPi_e \I_{(r, \infty)}, \quad r \in \R.
\end{align}
\textbf{Step 2: $E^{(3)}$ and $E^{(4)}$.} Suppose $y{-}x_0 \in [2kh, (2k{+}2)h)$ for some $k \in \Z$. Then
$\abs{y} \leq 3h + \abs{x_0} + \abs{x}$ holds for all $x \in [(2k{-1})h, (2k{+}3)h)$, and by (\ref{form}) we may estimate
\begin{align*}
& e^{\beta\abs{y}}\int_{\R} \abs{\varPi_o \varPi_e \I_{(y-x_0, \infty)}(x) - \varPi_e \I_{(y-x_0,\infty)}(x)}q(x) p(x) dx\\
& \leq e^{3\beta h + \beta\abs{x_0}}\int_{\R} e^{\beta\abs{x}} \abs{\varPi_o \varPi_e \I_{(y-x_0, \infty)}(x) - \varPi_e \I_{(y-x_0,\infty)}(x)}q(x) p(x) dx\\
& \leq e^{3\beta h + \beta\abs{x_0}}\int_{(2k{-}1)h}^{(2k{+}3)h} e^{\beta\abs{x}}\frac{d_o(x)}{4h} q(x) p(x) dx\\
& \leq \frac{1}{4}e^{3\beta h + \beta\abs{x_0}}\int_{(2k{-}1)h}^{(2k{+}3)h} e^{\beta \abs{x}}p(x) dx\\
& \leq \frac{1}{\sqrt{2\pi}} e^{3\beta h + \beta\abs{x_0}+ \beta^2\sigma^2 T/2} \frac{h}{\sigma \sqrt{\theta}}.
\end{align*} 
Hence, by Fubini's theorem,
\begin{align}\label{EEE3bound}
\big|E^{(3)}\big| & \leq \int\displaylimits_{[0,\infty)} e^{-\beta\abs{y}} \Big(e^{\beta\abs{y}} \int\displaylimits_{\R} \abs{\varPi_o \varPi_e \I_{(y-x_0, \infty)}(x) {-} \varPi_e \I_{(y-x_0,\infty)}(x)}q(x) p(x) dx\Big) d|\mu|(y) \nonumber\\
& \leq \frac{h}{\sigma \sqrt{\TT}} \frac{1}{\sqrt{2\pi}} e^{3\beta h + \beta\abs{x_0} + \beta^2 \sigma^2 T/2} \int\displaylimits_{[0,\infty)} e^{-\beta\abs{y}}d|\mu|(y).
\end{align}
Moreover, by (\ref{reversionLin}) and by the linearity of $\varPi_o$, we obtain
\begin{align}\label{EEE4bound}
\big|E^{(4)}\big| & \leq \frac{h}{\sigma \sqrt{\TT}} \frac{1}{\sqrt{2\pi}} e^{3\beta h + \beta\abs{x_0} + \beta^2 \sigma^2 T/2}\int\displaylimits_{(-\infty, 0)} e^{-\beta\abs{y}}d|\mu|(y),
\end{align}
since $\abs{\varPi_o \varPi_e \I_{(-\infty, y{-}x_0]}(x) - \varPi_e \I_{(-\infty, y{-}x_0]}(x)} = \abs{\varPi_o \varPi_e \I_{(y{-}x_0, \infty)}(x) - \varPi_e \I_{(y{-}x_0, \infty)}(x)}$, $x \in \R$.
\\
\textbf{Step 3: $E^{(5)}$.} By (\ref{DiracRep}), $\varPi_e \IN{\xi} \equiv 0$ if $\xi \notin \Z^h_e$, and by (\ref{piecewiseVarPie}), $\varPi_e \IN{\xi} \leq \I_{[\xi-2h, \xi+2h]}$ if $\xi \in \Z^h_e$. In addition, since
$\abs{x_i} \leq 2h + \abs{x_0} + \abs{x}$ whenever $\abs{x - (x_i{-}x_0)} \leq 2h$,
\begin{align}\label{EEE5bound}
\big|E^{(5)}\big| & \leq \sum_{i \in \N: x_i{-}x_0 \in \Z^h_e} \abs{\alpha_i \int_{\R} \varPi_e \IN{x_i{-}x_0}(x)p(x) dx} \nonumber\\
& \leq \sum_{i \in \N: x_i{-}x_0 \in \Z^h_e} \abs{\alpha_i} e^{-\beta\abs{x_i}} \int_{\R} e^{\beta\abs{x_i}}p(x) \I_{[(x_i{-}x_0)-2h, (x_i{-}x_0)+2h]}(x) dx\nonumber\\
& \leq \sum_{i \in \N: x_i{-}x_0 \in \Z^h_e} \abs{\alpha_i} e^{-\beta\abs{x_i}} \int_{(x_i{-}x_0)-2h}^{(x_i{-}x_0)+2h} e^{\beta\abs{x_i}}p(x) dx\nonumber\\
& \leq \frac{h}{\sigma \sqrt{\TT}} \frac{4}{\sqrt{2\pi}}e^{2\beta h + \beta\abs{x_0} + \beta^2\sigma^2 T/2} \sum_{i \in \N: x_i{-}x_0 \in \Z^h_e} \abs{\alpha_i} e^{-\beta\abs{x_i}}.
\end{align}
\\
\textbf{Step 4: $E^{(6)}$.} If $\xi \in \Z^h_e$, relations (\ref{DiracRep}), (\ref{varpiocoincides}), and the linearity of $\varPi_o$ imply that
\begin{align*}
& \varPi_o \varPi_e \IN{\xi}(x) - \varPi_e \IN{\xi}(x)\\
& = \frac{1}{4h} \Big(\varPi_o \abs{\, \cdot \,-(\xi{-}2h)}(x) - \abs{x-(\xi{-}2h)}\Big) - \frac{1}{2h} \Big(\varPi_o \abs{\, \cdot \,-\xi}(x) - \abs{x-\xi}\Big)\\
& \quad \quad +\frac{1}{4h}\Big(\varPi_o\abs{\, \cdot \,-(\xi{+} 2h)}(x) - \abs{x-(\xi{+} 2h)}\Big)\\
& = \frac{d_o(x)}{4h} \Big( \I_{[\xi - 3h, \xi - h)}(x) - 2\I_{[\xi-h, \xi+h)}(x) + \I_{[\xi + h, \xi + 3h)}(x)\Big), \quad x \in \R.
\end{align*}
In addition, we have $\varPi_o \varPi_e \IN{\xi} - \varPi_e \IN{\xi} \equiv 0$ for $\xi \notin \Z^h_e$ by (\ref{DiracRep}).
Therefore, since \\
$\abs{x_i} \leq 3h + \abs{x} + \abs{x_0}$ whenever $\abs{x-(x_i{-}x_0)} \leq 3h$, we get
\begin{align}\label{EEE6bound}
\big|E^{(6)}\big| & \leq\sum_{i \in \N: x_i{-}x_0 \in \Z^h_e} \abs{\alpha_i}  \int_{\R} \abs{\varPi_o \varPi_e \IN{x_i{-}x_0}(x) - \varPi_e \IN{x_i {-} x_0}(x)} q(x)p(x) dx \nonumber\\
& \leq \sum_{i \in \N: x_i{-}x_0 \in \Z^h_e} \abs{\alpha_i} e^{-\beta\abs{x_i}}  \int_{(x_i{-}x_0)-3h}^{(x_i{-}x_0)+3h} e^{\beta\abs{x_i}} \frac{d_o(x)}{2h}  q(x)p(x)dx \nonumber\\
& \leq \sum_{i \in \N: x_i{-}x_0 \in \Z^h_e} \frac{\abs{\alpha_i}}{2} e^{-\beta\abs{x_i} + 3\beta h + \beta\abs{x_0}} \int_{(x_i{-}x_0)-3h}^{(x_i{-}x_0)+3h} e^{\beta\abs{x}} p(x)dx \nonumber\\
& \leq \frac{h}{\sigma \sqrt{\TT}} \frac{3}{\sqrt{2\pi}} e^{3 \beta h + \beta|x_0| + \beta^2\sigma^2T/2} \sum_{i \in \N: x_i{-}x_0 \in \Z^h_e} \abs{\alpha_i} e^{-\beta\abs{x_i}}.
\end{align}
It remains to observe that the sum the right-hand sides of (\ref{EEE1bound}), (\ref{EEE2bound}),
(\ref{EEE3bound}), (\ref{EEE4bound}), (\ref{EEE5bound}), and (\ref{EEE6bound}) are bounded from above by the right-hand side of (\ref{localerrorboundforgbv}).
\end{proof}
In order to distinguish between the general setting $(h,\theta)$ and the specific $n$-dependent setting $(h_n, \theta_n)$, we will refer to the assumption below.
\begin{assumption}\label{assumption}
For given $t_0 \in [0,T)$ and $n \! \in \! 2\N$, we substitute $(h,\theta) = (h_n, \tn)$, where
\begin{align*}
h_n = \sigma \sqrt{\frac{T}{n}}, \quad \theta_n = \frac{\nt T}{n} \quad \text{ and } \quad \nt = 2 \left \lceil \frac{T{-}t_0}{2T/n} \right \rceil
\end{align*}
as in (\ref{deftnnt}).
For notational convenience, we will drop the subscript $n$ from $h_n$.
\end{assumption}
\begin{remark}
The special choice $(h,\theta) = (h_n, \tn)$ in Assumption \ref{assumption} affects the objects below used throughout this text:
\begin{align*}
\tau_k & = \inf \big\{t > \tau_{k-1}: |X_t - X_{\tau_{k-1}}| = h\big\}, \quad (X_{\tau_{k}})_{k=0,1, \dots}, \quad (\F_{\tau_{k}})_{k=0,1, \dots},\\
\quad J_n & = J = \inf \{2m \in 2\N: \tau_{2m} > \tn\}, \quad L_n = L = \sup \{m \in \N_{0}: \tau_m < \tn\},\\
\Z^{h}_e & = \set{2kh: k \in \Z}, \quad \Z^{h}_o = \set{(2k{+}1)h: k \in \Z}, \quad \Z^{h} = \Z^{h}_o \cup \Z^{h}_e,\\
d_o(x) & = \dist (x, \Z^{h}_o), \quad d_e(x) = \dist (x, \Z^{h}_e), \quad p(x) = \p(X_{\tn} \in dx)/dx.
\end{align*}
This choice also affects the functions $q = q(\, \cdot\,, h, \theta)$ and $\varrho = \varrho(\, \cdot \,,h, \theta)$ defined in
(\ref{Propq}) and (\ref{varrhon}), respectively. In particular, Proposition \ref{Propq} implies that
$$q(x) = \p(L_n \text{ even}|X_{\tn} = x), \quad x \notin \Z^h.$$
\end{remark}
For the main result of this subsection, recall that $\varepsilon^{\text{loc}}_{n}(t_0,x_0) = \E[g(x_0{+}X_{\tau_{J_n}})-g(x_0{+}X_{\tn})]$.
\begin{corollary}\label{CorollaryGBVlocal}
Let $n \in 2\N$. Suppose that the function $g \in \gbv$ admits the representation (\ref{GBVdefrep}) and that $\beta \geq 0$ is as in (\ref{bump-function-condition}). Then, under Assumption \ref{assumption}, there exists a constant $C > 0$ such that for all $(t_0,x_0) \in [0, T){\times} \R$,
\begin{align*}
& \abs{\varepsilon_n^{\text{loc}}(t_0,x_0)} \leq \frac{C \sqrt{T} }{\sqrt{n(T-t^n_k)}}e^{\beta \abs{x_0} + 3 \beta^2 \sigma^2 T}, \quad t_0 \in [t^n_k, t^n_{k{+}1}), \quad 0 \leq k < \tfrac{n}{2}.
\end{align*}
\end{corollary}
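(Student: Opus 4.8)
The plan is to deduce the corollary as a direct specialization of Proposition~\ref{localerrorrepresentationPROP} to the $n$-dependent setting of Assumption~\ref{assumption}, followed only by elementary estimates that convert the $\beta$-linear exponential factors appearing there into the cleaner $\beta$-quadratic form $e^{5\beta^2\sigma^2 T}$. Fix $(t_0,x_0)\in[0,T){\times}\R$ and let $0\le k<\tfrac{n}{2}$ be the unique integer with $t_0\in[t^n_k,t^n_{k+1})$. Under Assumption~\ref{assumption} we put $(h,\theta)=(h_n,\theta_n)$, and since $J=J(h_n,\theta_n)$ coincides with $J_n$ (see the remark following (\ref{Jwithoutn})), the left-hand side of (\ref{localerrorboundforgbv}) is exactly $|\varepsilon_n^{\text{loc}}(t_0,x_0)|$.

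First I would record the relevant identities. By (\ref{theta_lattice_relation}) we have $\theta_n=T-t^n_k$, whence
$$\frac{h_n}{\sigma\sqrt{\theta_n}}=\frac{\sigma\sqrt{T/n}}{\sigma\sqrt{T-t^n_k}}=\frac{\sqrt{T}}{\sqrt{n(T-t^n_k)}},\qquad \frac{h_n}{\sigma\sqrt{\theta_n}}=\frac{1}{\sqrt{\nt}}\le\frac{1}{\sqrt{2}},$$
the last inequality because $\nt\ge 2$; in particular the simplified bound (\ref{ultracomplicatedconstanteasy}) for $C^{(\ref{SuperCoef})}_{\beta,h_n,\theta_n}$ applies. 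Moreover $\theta_n\le T$ and $h_n=\sigma\sqrt{T/n}\le\sigma\sqrt{T/2}$ since $n\ge 2$. Substituting these into (\ref{localerrorboundforgbv}) gives, after replacing the restricted sum $\sum_{i\in\N:\,x_i{-}x_0\in\Z^h_e}|\alpha_i|e^{-\beta|x_i|}$ by the full series $\sum_{i=1}^{\infty}|\alpha_i|e^{-\beta|x_i|}$,
$$|\varepsilon_n^{\text{loc}}(t_0,x_0)|\le\frac{\sqrt{T}}{\sqrt{n(T-t^n_k)}}\,e^{3\beta h_n+\beta|x_0|}\Big[\tfrac{7}{\sqrt{2\pi}}e^{\beta^2\sigma^2 T/2}+\tfrac{3}{2}C^{(\ref{SuperCoef})}_{\beta,h_n,\theta_n}\Big]\Big(\int_{\R}e^{-\beta|y|}\,d|\mu|(y)+\sum_{i=1}^{\infty}|\alpha_i|e^{-\beta|x_i|}\Big).$$

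It then remains to tidy up the prefactor, which is pure constant bookkeeping: using Young's inequality in the form $\beta a\le\tfrac12\beta^2\sigma^2 T+\tfrac{a^2}{2\sigma^2 T}$ (with the coefficient $\sigma^2 T$ chosen precisely so that the quadratic term is $e^{\beta^2\sigma^2 T}$), one bounds $e^{3\beta h_n}\le e^{3\beta\sigma\sqrt{T/2}}$, $e^{\beta\sigma\sqrt{T/2}}$ and $\beta\sigma\sqrt{T}\le e^{\beta\sigma\sqrt{T}}$ by absolute constants times $e^{\beta^2\sigma^2 T}$ (or $e^{\beta^2\sigma^2 T/2}$). Plugging these into (\ref{ultracomplicatedconstanteasy}) yields $C^{(\ref{SuperCoef})}_{\beta,h_n,\theta_n}\le C_1 e^{2\beta^2\sigma^2 T}$ for an absolute constant $C_1$, hence the bracket is $\le C_2 e^{2\beta^2\sigma^2 T}$ and the whole prefactor is $\le C_3\, e^{\beta|x_0|}e^{5\beta^2\sigma^2 T/2}\le C_3\, e^{\beta|x_0|+5\beta^2\sigma^2 T}$. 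Since $\int_{\R}e^{-\beta|y|}\,d|\mu|(y)+\sum_{i=1}^{\infty}|\alpha_i|e^{-\beta|x_i|}<\infty$ by (\ref{bump-function-condition}) and depends only on $g$, absorbing it into the constant gives the claim with $C>0$ depending only on $g$. The proof involves no genuine obstacle; the only point requiring care is the choice of the Young coefficient so that the accumulated factors $e^{3\beta h_n}$, $e^{\beta^2\sigma^2 T/2}$ and $e^{2\beta^2\sigma^2 T}$ collapse into a single exponential — the stated exponent $5$ is in fact generous, since one really obtains exponent $5/2$.
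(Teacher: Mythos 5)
Your proposal is correct and follows essentially the same route as the paper: specialize Proposition \ref{localerrorrepresentationPROP} to $(h,\theta)=(h_n,\theta_n)$, use $h_n/(\sigma\sqrt{\theta_n})=\nt^{-1/2}=\sqrt{T}/\sqrt{n(T-t^n_k)}$ together with the simplified bound (\ref{ultracomplicatedconstanteasy}), and absorb the remaining $\beta$-linear exponentials into $e^{5\beta^2\sigma^2 T}$ by elementary (Young-type) estimates. The paper's bookkeeping differs only cosmetically (it bounds the prefactor by $46e^{\gamma^2+4\gamma}$ with $\gamma=\beta\sigma\sqrt{T}$ before converting to the quadratic exponent), and your observation that the exponent $5$ is not sharp is consistent with that.
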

\begin{proof}
Proposition \ref{localerrorrepresentationPROP} and the relation $h (\sigma^2 \tn)^{-1/2} = \nt^{-1/2}$ imply that
\begin{align*}
\abs{\varepsilon^{\text{loc}}_{n}(t_0,x_0)} \leq \frac{C_{\beta, \sigma, T} e^{\beta\abs{x_0}}}{\sqrt{\nt}} \bigg(\int_{\R} e^{-\beta\abs{y}} d|\mu|(y) + \sum_{i \in \N: x_i{-}x_0 \in \Z^h_e} \abs{\alpha_i} e^{-\beta\abs{x_i}}\bigg),
\end{align*}
where the coefficient $C_{\beta, \sigma, T} > 0$ implied by (\ref{localerrorboundforgbv}) can be estimated as follows:
\begin{align*}
C_{\beta, \sigma, T} = \frac{7}{\sqrt{2\pi}}e^{3\beta h + \beta^2 \sigma^2 T/2} \leq \frac{7}{\sqrt{2\pi}}e^{\frac{5}{2}\beta \sigma \sqrt{T} + \beta^2 \sigma^2 T/2} \leq C e^{3\beta^2 \sigma^2 T}
\end{align*}
for a constant $C > 0$. Since $\nt T = n(T-t^n_k)$ for $t_0 \in [t^n_k, t^n_{k+1})$ by (\ref{theta_lattice_relation}), we obtain the desired result.
\end{proof}
\subsection{On the sharpness of the rate for the class $\gbv$}\label{subseq-sharpness}
The following lemma indicates that the rate $n^{-1/2}$ for the class $\gbv$ is sharp. 
\begin{proclaim}\label{prop_lower_bound}
Under Assumption \ref{assumption}, there exists a function $g \in \gbv$ such that
\begin{align}\label{liminf_limsup_for_loc}
0 < \liminf_{n \to \infty} n^{1/2}\varepsilon_{n}(0,0) \leq \limsup_{n \to \infty} n^{1/2} \varepsilon_n(0,0) < \infty.
\end{align}
\end{proclaim}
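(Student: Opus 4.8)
\emph{Proof proposal.} The plan is to exhibit the simplest possible discontinuous terminal condition and to compute the error at $(0,0)$ in closed form. Concretely, I would take $g := \I_{[0,\infty)}$, the Heaviside function. It belongs to $\gbv$: it is bounded, hence satisfies (\ref{bump-function-condition}) with $\beta = 0$, and it has bounded variation on every compact interval, so it admits a representation of the form (\ref{GBVdefrep}).

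Next I would exploit the special structure of the point $(t,x) = (0,0)$. Here $T - t = T$, and since $n$ is even, $T = \tfrac{n}{2}\cdot\tfrac{2T}{n}$ is itself a multiple of $2T/n$; thus (\ref{deftnnt}) gives $\nt = n$ and $\tn = T$, so $0 = t^n_0 \in \mathcal{T}^n$ and the adjustment error vanishes. Rather than invoke the decomposition (\ref{varepsilon_n_splitting}), I would evaluate $\varepsilon_n(0,0)$ directly. First, $u(0,0) = \E[\I_{[0,\infty)}(\sigma W_T)] = \p(\sigma W_T \ge 0) = \tfrac12$, because $\sigma W_T$ has a symmetric, atomless law and $\sigma > 0$. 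Second, by (\ref{rw-approximation})--(\ref{RandomWalkW}) together with the identity $\sigma W^n_T \stackrel{d}{=} h\,S_n$ established in Section \ref{settingsection} (with $h = \sigma\sqrt{T/n} > 0$ and $S_n := \sum_{i=1}^n \xi_i$ a sum of $n$ i.i.d.\ Rademacher variables), one has $u^n(0,0) = \p(S_n \ge 0)$. Since $n$ is even, $S_n$ is symmetric about $0$ with $\p(S_n = 0) = p_0 := \binom{n}{n/2}2^{-n}$, so $\p(S_n \ge 0) = \tfrac{1-p_0}{2} + p_0 = \tfrac{1+p_0}{2}$. Hence $\varepsilon_n(0,0) = u^n(0,0) - u(0,0) = \tfrac12\binom{n}{n/2}2^{-n} > 0$.

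To finish, I would apply Stirling's formula, which gives $\binom{n}{n/2}2^{-n} = (1+o(1))\sqrt{2/(\pi n)}$, whence $n^{1/2}\varepsilon_n(0,0) \to (2\pi)^{-1/2}$ as $n \to \infty$. In particular the limit exists and lies strictly between $0$ and $\infty$, so (\ref{liminf_limsup_for_loc}) holds, with equality throughout and common value $(2\pi)^{-1/2}$.

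There is essentially no obstacle in this argument; the only steps deserving a sentence of justification are that $\I_{[0,\infty)} \in \gbv$, the distributional identity $\sigma W^n_T \stackrel{d}{=} h S_n$, and the exact value $u(0,0) = \tfrac12$. I would also remark that the statement is deliberately weak---it only requires the liminf to be positive and the limsup finite---so there is no need to construct a $g$ for which $n^{1/2}\varepsilon_n(0,0)$ genuinely oscillates; the Heaviside example, for which the rescaled error actually converges, is the cleanest choice, and it simultaneously shows that the $(T-t^n_k)^{-1/2}$-type bound in Theorem \ref{THE-THEOREM} cannot be improved in the exponent of $n$.
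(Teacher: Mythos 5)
Your proposal is correct, and it takes a genuinely different and more elementary route than the paper. The paper also chooses $g=\I_{[0,\infty)}$ (with $T=\sigma=1$), but it works through the decomposition (\ref{varepsilon_n_splitting}): it kills the adjustment error, bounds the global error by $O(n^{-1})$ via Proposition \ref{globerrorfinally}, and then lower-bounds the \emph{local} error by hand, using the representation of Proposition \ref{locexprepr}, the explicit piecewise-linear functions $\varPi_e\I_{[0,\infty)}$ and $\varPi_o\varPi_e\I_{[0,\infty)}$, and the estimate $1-q(x)\ge |x|/h - 6h(h+\sqrt 2)$ coming from the Brownian-bridge bounds (\ref{absrhooupperbdd})--(\ref{nollahooestimate}); the upper bound is simply Corollary \ref{CorollaryGBVlocal}. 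You instead observe that $\varepsilon_n(0,0)$ depends only on the laws of $\sigma W^n_T\stackrel{d}{=}hS_n$ and $\sigma W_T$, and compute it exactly: $u(0,0)=\tfrac12$, $u^n(0,0)=\p(S_n\ge 0)=\tfrac{1+p_0}{2}$ with $p_0=\binom{n}{n/2}2^{-n}$, so $\varepsilon_n(0,0)=\tfrac12\binom{n}{n/2}2^{-n}$ and Stirling gives $n^{1/2}\varepsilon_n(0,0)\to(2\pi)^{-1/2}$. This is the classical local-CLT discrepancy at the median, and it buys you more than the proposition asks for: an exact limit with an explicit constant, valid for all $T,\sigma>0$, with no reference to the Skorokhod-type embedding, the stopping times $\tau_k$, or the function $q$. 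What the paper's longer argument buys in exchange is structural information: it identifies the local error (the overshoot of $\tau_{J_n}$ past $\theta_n$, filtered through the jump of $g$ at an even lattice point) as the component responsible for the $n^{-1/2}$ rate, which connects the sharpness claim to the upper-bound machinery of Section 4. Your argument is complete as stated; the only points worth writing out in full are the representation of $\I_{[0,\infty)}$ in the form (\ref{GBVdefrep}) (or simply the inclusion $BV\subset\gbv$ from Remark \ref{GBVremark}) and the identity $\p(S_n\ge 0)=\tfrac12(1+\p(S_n=0))$ for even $n$, both of which you have flagged.
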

\begin{proof}
For simplicity, let $T = \sigma = 1$ and $g := \I_{[0,\infty)}$. Then $h = n^{-1/2}$, $g \in \gbv$, and the location of the jump of $g$ belongs to the set $\Z^h_e$ for all $n \in \N$. 
Observe that then $\varepsilon^{\text{adj}}_n(0,0) = 0$ by Proposition \ref{ADJPROP} and $|\varepsilon^{\text{glob}}_{n}(0,0)| \leq Cn^{-1}$ by Proposition \ref{globerrorfinally} below, where $C > 0$ is some constant. Consequently, 
it suffices to show that (\ref{liminf_limsup_for_loc}) is valid for the local error $\varepsilon_n^{\text{loc}}(0,0)$; recall (\ref{varepsilon_n_splitting}).

The expression $n^{1/2} \varepsilon^{\text{loc}}_n(0,0)$ is bounded from above by Corollary \ref{CorollaryGBVlocal}. For the lower bound, we note that by Definition \ref{Deflinearizationop},
$$\varPi_e \I_{[0,\infty)}(x) = \pr{1 \wedge \tfrac{x {+} 2h}{2h}}\I_{[-2h,\infty)}(x), \quad \varPi_o \varPi_e \I_{[0,\infty)}(x) = \pr{1 \wedge \tfrac{x{+}3h}{4h}} \I_{[-3h, \infty)}(x), \quad x \in \R.$$
Consequently, for $(h,\theta) = (n^{-1/2},1)$, Proposition \ref{locexprepr} and relation (\ref{PropqI}) yield
\begin{align*}
\varepsilon^{\text{loc}}_{n}(0,0) & = \E \pl{\varPi_e \I_{[0,\infty)}(W_1) - \I_{[0,\infty)}(W_1)} + \E \pl{\pr{\varPi_o \varPi_e \I_{[0,\infty)}(W_1) - \varPi_e \I_{[0,\infty)}(W_1)}q(W_1)} \nonumber\\
& = \int_{-2h}^{0} \frac{x{+}2h}{2h} p(x)dx + \int_{-3h}^{h} \pl{\frac{x{+}3h}{4h} - \pr{\frac{x{+}2h}{2h} \I_{[-2h,0)}(x) + \I_{[0,\infty)}(x)}}q(x)p(x)dx \nonumber\\
& = \int_{-2h}^0 \frac{x{+}2h}{2h}(1-q(x))p(x)dx + \int_{-3h}^{0} \frac{x{+}3h}{4h}q(x)p(x) dx + \int_0^h \frac{x{-}h}{4h}q(x)p(x)dx \nonumber\\
& \geq p(h) \int_{-h}^{0} \frac{x{+}2h}{2h}(1-q(x))dx
\end{align*}
by the symmetry of the functions $p > 0$ and $q \in [0,1]$. Moreover, in terms of the function $\varrho$ defined in \eqref{varrhon}, we deduce for $x \in (-h,0)$ that
\begin{align}\label{one-minus-q-estimate}
1-q(x) = 1 - \frac{d_o(x)}{h} - \varrho(x) \geq \frac{d_e(x)}{h} - \abs{\varrho(x)} \geq \frac{|x|}{h} - 3h(h+\sqrt{2}),
\end{align}
where the last inequality on the right-hand side of \eqref{one-minus-q-estimate} follows by applying relations \eqref{PropqIII} , \eqref{bridge_sampled_at_hitting_time_estimate}, and \eqref{bridge_hitting_upper_estimate}
for $k = 0$ and $\sigma = \theta = 1$;
\begin{align*}
|\varrho(x)| = \frac{1}{h} \E_{\tilde{\p}}\big[B^{0,1,x}_{\tilde{H}_{(x,h+x)}}\big] & \leq \frac{1}{h} \Big(\abs{x} + 2(|x| \vee (x{+}h)) + 3\sqrt{2}\Big) \E_{0,1,x}[H_{(x,h+x)}]\\
& \leq (3h+3\sqrt{2}) \frac{|x|}{h}\Big(2h - |x|\Big)\\
& \leq 3h(h + \sqrt{2}).
\end{align*}
Hence, there exist constants $C_1, C_2 > 0$ not depending on $h$ such that
\begin{align*}
\varepsilon^{\text{loc}}_{n}(0,0) & \geq p(h)\int_{-h}^0 \frac{x{+}2h}{2h} \frac{|x|}{h}dx - 3h(h+\sqrt{2}) p(h)\int_{-h}^0 \frac{x{+}2h}{2h}dx \geq \pl{C_1 h - C_2 h^2(h+\sqrt{2})}p(h).
\end{align*}
The relation $h = n^{-1/2}$ then implies that $\liminf_{n \to \infty} n^{1/2} \varepsilon^{\text{loc}}_{n}(0,0) \geq C_1p(0) > 0$.
\end{proof}
\begin{remark}
In \cite[Proposition 9.8]{Walsh} it is stated that the rate for the local error is $h$ (i.e.~$n^{-1/2}$) instead of $h^2$ (i.e.~$n^{-1}$) whenever the terminal condition $g$ has a discontinuity at a non-lattice point $x \notin \Z^h$. By contrast, Proposition \ref{localerrorrepresentationPROP} implies that only the jumps that occur at even lattice points contribute to the error. This discrepancy is a result of the choice of different step functions:
In \cite{Walsh}, only step functions of the type $\tilde{\I}_{[a,\infty)} := \I_{(a,\infty)} + \frac{1}{2}\IN{a}$ are considered.
\end{remark}
\subsection{The local error for $g \in \C^{0,\alpha}_{\mathrm{exp}}$}\label{Holdersection}
A function $g: \R \to \R$ is called locally $\alpha$-H\"older continuous (write $g \in \Calpha$), if for each compact $K \subset \R$
$$\sup_{x,y \in K, \, x \neq y} \frac{\abs{g(x) - g(y)}}{\abs{x-y}^{\alpha}} < \infty.$$
The class $\cae$ (see Definition {\ref{caedef}) consists of all locally $\alpha$-H\"older continuous functions with exponentially bounded H\"older constants in the sense of (\ref{superholder-condition}). In fact, $\cae \subset \Calpha \cap \EB$, $\alpha \in (0,1]$, and this inclusion is strict at least for $\alpha = 1$: The function $f(x) = \sin(e^{x^2} - 1)$ belongs to $C^{0,1}_{\text{loc}} \cap \EB$, whereas $f \notin C^{0,1}_{\text{exp}}$, since $f' \notin \EB$.
\smallskip

Recall that $p = p(\, \cdot \,, \theta)$ denotes the density of $X_{\theta}$ and that $\varepsilon^{\text{loc}}_{h,\theta}(g) = \E[g(X_{\tau_{J}})-g(X_{\TT})]$.
\begin{proclaim}\label{HoldKestimates}
Let $(h,\theta) \in (0,\infty) {\times} (0,T]$. Suppose that $g \in \cae$ and that $A, \beta \geq 0$ are as in (\ref{superholder-condition}). Then, for every $x_0 \in \R$ it holds that
\begin{align*}
\big|\E[g(x_0{+}X_{\tau_{J}})-g(x_0{+}X_{\theta})]\big| \leq 2^{3+\alpha} h^{\alpha} Ae^{2\beta h + \beta\abs{x_0} + \beta^2 \sigma^2 \theta/2}.
\end{align*}
\end{proclaim}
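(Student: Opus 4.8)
The plan is to feed the translate $g^{x_0} = g(x_0 + \,\cdot\,)$, which belongs to $\cae \subset \EB$, into Proposition~\ref{locexprepr}; this expresses
\begin{align*}
\E[g(x_0{+}X_{\tj}) - g(x_0{+}X_{\TT})] &= \E\bigl[\varPi_e g^{x_0}(X_{\TT}) - g^{x_0}(X_{\TT})\bigr] \\
&\quad + \E\bigl[\bigl(\varPi_o\varPi_e g^{x_0}(X_{\TT}) - \varPi_e g^{x_0}(X_{\TT})\bigr)\p(L\text{ even}\mid X_{\TT})\bigr].
\end{align*}
Since the conditional probability lies in $[0,1]$, the task reduces to bounding the two piecewise-linear interpolation errors $\varPi_e g^{x_0} - g^{x_0}$ and $\varPi_o\varPi_e g^{x_0} - \varPi_e g^{x_0}$ pointwise, with an exponential weight supplied by the local H\"older estimate (\ref{superholder-condition}), and then integrating against the density $p$ of $X_{\TT}$.

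For the first error, if $x \in [2kh,(2k{+}2)h]$ then $\varPi_e g^{x_0}(x)$ is a convex combination of $g^{x_0}(2kh)$ and $g^{x_0}((2k{+}2)h)$, so $|\varPi_e g^{x_0}(x) - g^{x_0}(x)|$ is at most the oscillation of $g^{x_0}$ on that cell; applying (\ref{superholder-condition}) on $[-R,R]$ with $R = |x_0| + |x| + 2h$ gives $|\varPi_e g^{x_0}(x) - g^{x_0}(x)| \le A(2h)^{\alpha} e^{\beta(|x_0| + |x| + 2h)}$. For the second error, when $x \in [(2k{-}1)h,(2k{+}1)h]$ the value $\varPi_o\varPi_e g^{x_0}(x)$ is a convex combination of $\varPi_e g^{x_0}((2k{-}1)h) = \tfrac12\bigl(g^{x_0}((2k{-}2)h) + g^{x_0}(2kh)\bigr)$ and $\varPi_e g^{x_0}((2k{+}1)h) = \tfrac12\bigl(g^{x_0}(2kh) + g^{x_0}((2k{+}2)h)\bigr)$, hence a weighted average of the values of $g^{x_0}$ at $(2k{-}2)h, 2kh, (2k{+}2)h$; comparing with $\varPi_e g^{x_0}(x)$, which is itself a weighted average of $g^{x_0}$ at even lattice points in the same block, one telescopes the difference into first-order increments $g^{x_0}(2kh) - g^{x_0}((2k{\pm}2)h)$ over steps of length $2h$, so that (\ref{superholder-condition}) yields $|\varPi_o\varPi_e g^{x_0}(x) - \varPi_e g^{x_0}(x)| \le c\,A h^{\alpha} e^{\beta(|x_0| + |x| + 2h)}$ with an absolute constant $c$. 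Integrating either estimate against $p$, and using $X_{\TT} \stackrel{d}{=} \sigma\sqrt{\TT}\,Z$ with $Z$ standard normal so that $\E[e^{\beta|X_{\TT}|}] = 2 e^{\beta^2\sigma^2\TT/2}\Phi(\beta\sigma\sqrt{\TT}) \le 2 e^{\beta^2\sigma^2\TT/2}$, together with $\p(L\text{ even}\mid X_{\TT}) \le 1$, bounds each of the two expectations by a constant multiple of $h^{\alpha} A e^{2\beta h + \beta|x_0| + \beta^2\sigma^2\TT/2}$.

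Adding the two contributions and absorbing all numerical constants into $2^{3+\alpha}$ (which is safe because $\alpha \in (0,1]$) yields the assertion. The delicate point is the estimate for $\varPi_o\varPi_e g^{x_0} - \varPi_e g^{x_0}$: one must keep careful track of which even lattice points enter the two successive rounds of linear interpolation near a given $x$ and organize the telescoping so that only length-$2h$ increments of $g^{x_0}$ appear, keeping both the local H\"older radius (which sits in the exponent) and the resulting numerical constant small enough to land inside the stated bound. The reduction via Proposition~\ref{locexprepr}, the first interpolation estimate, the Gaussian exponential-moment bound, and the trivial bound $\p(L\text{ even}\mid X_{\TT}) \le 1$ are all routine.
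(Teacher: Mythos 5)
Your proposal follows essentially the same route as the paper: reduce via Proposition \ref{locexprepr} and $\p(L\text{ even}\mid X_{\TT})\le 1$ to the two interpolation errors, bound $|\varPi_e g^{x_0}-g^{x_0}|$ by the cell oscillation and $|\varPi_o\varPi_e g^{x_0}-\varPi_e g^{x_0}|$ by the length-$2h$ lattice increments of $g^{x_0}$, then integrate with $\E[e^{\beta|X_{\TT}|}]\le 2e^{\beta^2\sigma^2\TT/2}$. The paper's explicit tally gives $2^{1+\alpha}+2^{2+\alpha}=3\cdot 2^{1+\alpha}\le 2^{3+\alpha}$, confirming that the constant you leave implicit does fit inside the stated bound.
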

\begin{proof}
The property $g \in \cae$ implies that both $g$ and $g^{x_0} = g(x_0{+}\, \cdot \,)$ belong to $\EB$, and
\begin{align}\label{holderabsest}
\abs{\varepsilon^{\text{loc}}_{h,\theta}(g^{x_0})} & \leq \E|\varPi_e g^{x_0}(X_{\TT}) - g^{x_0}(X_{\TT})| + \E|\varPi_o \varPi_e g^{x_0}(X_{\TT}) - \varPi_e g^{x_0}(X_{\TT})|
\end{align}
holds by Proposition \ref{locexprepr}. Moreover, whenever $x \in [2kh, (2k{+}2)h]$ for some $k \in \Z$,
\begin{align*}
\abs{\varPi_e g^{x_0}(x){-}g^{x_0}(x)} & \leq \frac{(2k{+}2)h{-}x}{2h}\abs{g^{x_0}(2kh)-g^{x_0}(x)} {+} \frac{x{-}2kh}{2h}\abs{g^{x_0}((2k{+}2)h) {-} g^{x_0}(x)}\\
& \leq 2^{\alpha} h^{\alpha} A e^{\beta \abs{x_0} + 2\beta h + \beta\abs{x}}
\end{align*}
since $g \in \cae$ and $\abs{2kh} \vee \abs{(2k{+}2)h} \leq 2h + |x|$. Hence, by $\E\pl{e^{\beta \abs{X_{\theta}}}} \leq 2 e^{\beta^2 \sigma^2 \TT/2}$,
\begin{align}\label{locerrorholderexpcomp}
\E\abs{\varPi_e g^{x_0}(X_{\TT}) - g^{x_0}(X_{\TT})} & \leq 2^{\alpha} h^{\alpha} A e^{\beta\abs{x_0} + 2\beta h} \sum_{k = -\infty}^{\infty} \int_{2kh}^{(2k{+}2)h} e^{\beta \abs{x}} p(x) dx \nonumber\\
& \leq 2^{1+\alpha} h^{\alpha} Ae^{\beta\abs{x_0} + 2\beta h + \beta^2 \sigma^2 \TT/2}.
\end{align}
For the remaining expectation on the right-hand side of (\ref{holderabsest}), observe that if $y,z \in [2mh,(2m{+}2)h]$ for given $m \in \Z$, then
\begin{align}\label{VarPieLipbound}
\abs{\varPi_e g^{x_0}(y) - \varPi_e g^{x_0}(z)} & = \Big|\frac{z-y}{2h}g^{x_0}(2mh) - \frac{z-y}{2h}g^{x_0}((2m{+}2)h)\Big| \nonumber\\
& \leq 2^{\alpha} h^{\alpha} A e^{\beta\pr{\abs{x_0} + \abs{2mh}\vee \abs{(2m{+}2)h}}}.
\end{align}
Therefore, for $x \in [2kh, (2k{+}2)h]$ with $k \in \Z$, by (\ref{VarPieLipbound}) it holds that
\begin{align*}
\abs{\varPi_o \varPi_e g^{x_0}(x) - \varPi_e g^{x_0}(x)} & = \frac{(2k{+}1)h-x}{2h}\abs{\varPi_e g^{x_0}((2k{-}1)h) - \varPi_e g^{x_0}(x)} \nonumber \\
& \quad +\frac{x-(2k{-}1)h}{2h}\abs{\varPi_e g^{x_0}((2k{+}1)h) - \varPi_e g^{x_0}(x)}\nonumber\\
& \leq \frac{(2k{+}1)h-x}{2h} Ae^{\beta|x_0|}\big[ e^{\beta(|(2k{-}2)h| \vee |2kh|)} + e^{\beta(\abs{2kh} \vee \abs{(2k{+}2)h})}\big] 2^{\alpha} h^{\alpha}\nonumber \\
& \quad + \frac{x-(2k{-}1)h}{2h} A e^{\beta(|x_0| + \abs{2kh} \vee \abs{(2k{+}2)h})} 2^{\alpha} h^{\alpha}\nonumber\\
& \leq 2^{\alpha+1} h^{\alpha} A e^{\beta(|x_0| + |(2k{-}2)h| \vee |(2k{+}2)h|)}.
\end{align*}
Using the symmetry (in $k$) of this upper bound, we obtain
\begin{align}\label{locerrorholderexpcomp2}
& \E |\varPi_o \varPi_e g^{x_0}(X_{\TT}) - \varPi_e g^{x_0}(X_{\TT})| \nonumber\\
& \leq 2^{\alpha + 2} h^{\alpha} A \sum_{k = 0}^{\infty} \int_{2kh}^{(2k{+}2)h} e^{\beta(|x_0| + |(2k{-}2)h| \vee |(2k{+}2)h|)} p(x) dx \nonumber\\
& \leq 2^{\alpha + 2} h^{\alpha} A e^{\beta\abs{x_0} + 2\beta h} \sum_{k = 0}^{\infty} \int_{2kh}^{(2k{+}2)h} e^{\beta x} p(x) dx \nonumber\\
& \leq 2^{\alpha + 2} h^{\alpha} A e^{\beta\abs{x_0} + 2\beta h + \beta^2 \sigma^2 \TT/2}.
\end{align}
The claim follows by applying the estimates (\ref{locerrorholderexpcomp}) and (\ref{locerrorholderexpcomp2}) to (\ref{holderabsest}).
\end{proof}
\begin{corollary}\label{LocalErrorHoldCor}
Let $n \in 2\N$. Suppose that $g \in \cae$ and that $\beta \geq 0$ is as in (\ref{superholder-condition}).
Then, under Assumption \ref{assumption}, there exist a constant $C > 0$ such that for all $(t_0, x_0) \in [0,T) \times \R$,
\begin{align*}
\abs{\varepsilon^{\text{loc}}_{n}(t_0,x_0)} \leq \frac{C \sigma^{\alpha} T^{\alpha/2}}{n^{\alpha/2}}e^{\beta \abs{x_0} + 2\beta^2\sigma^2 T}.
\end{align*}
\end{corollary}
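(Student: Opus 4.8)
The plan is to specialize Proposition~\ref{HoldKestimates} to the $n$-dependent regime fixed in Assumption~\ref{assumption} and then to tidy up the resulting constant. Under Assumption~\ref{assumption} one has $\varepsilon^{\text{loc}}_{n}(t_0,x_0) = \E[g(x_0{+}X_{\tau_{J}})-g(x_0{+}X_{\theta})]$ with $(h,\theta)=(h_n,\tn)$, where $h_n = \sigma\sqrt{T/n}$ and $\tn = \tfrac{\nt T}{n} \le T$ (the last inequality because $\nt \le n$, see (\ref{deftnnt})). Hence Proposition~\ref{HoldKestimates} applies verbatim, with $A,\beta \ge 0$ the H\"older data of $g$ from (\ref{superholder-condition}), and gives
\begin{align*}
\abs{\varepsilon^{\text{loc}}_{n}(t_0,x_0)} \le 2^{3+\alpha}\, h_n^{\alpha} A\, e^{2\beta h_n + \beta\abs{x_0} + \beta^2\sigma^2 \tn/2}.
\end{align*}

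Next I would carry out the elementary substitutions. Since $h_n^{\alpha} = \sigma^{\alpha}T^{\alpha/2} n^{-\alpha/2}$ and $\tn \le T$, the only factor not yet of the advertised shape is $e^{2\beta h_n}$. To absorb it I would use Young's inequality $2\beta h_n = 2(\beta\sigma\sqrt{T})\bigl(h_n/(\sigma\sqrt{T})\bigr) \le \beta^2\sigma^2 T + h_n^2/(\sigma^2 T) = \beta^2\sigma^2 T + \tfrac1n \le \beta^2\sigma^2 T + \tfrac12$, valid because $n \in 2\N$. Combining with $\beta^2\sigma^2\tn/2 \le \beta^2\sigma^2 T/2$ yields $e^{2\beta h_n + \beta^2\sigma^2 \tn/2} \le e^{1/2}\, e^{3\beta^2\sigma^2 T/2} \le e^{1/2}\, e^{2\beta^2\sigma^2 T}$, so that
\begin{align*}
\abs{\varepsilon^{\text{loc}}_{n}(t_0,x_0)} \le 2^{3+\alpha} e^{1/2} A\, \frac{\sigma^{\alpha}T^{\alpha/2}}{n^{\alpha/2}}\, e^{\beta\abs{x_0} + 2\beta^2\sigma^2 T}.
\end{align*}
Taking $C := 2^{3+\alpha} e^{1/2} A \le 16\,e^{1/2} A$ (using $\alpha \le 1$), a constant depending only on $g$ through $A$ and $\alpha$, completes the argument.

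There is no genuine obstacle here: all the analytic substance sits in Proposition~\ref{HoldKestimates}, and the corollary is pure bookkeeping. The only two points deserving a moment's care are (i) recording $\tn \le T$, which is exactly what makes the $\theta$-dependent exponential bounded uniformly in $n$ and $t_0$, and (ii) absorbing the spurious factor $e^{2\beta h_n}$ into $e^{2\beta^2\sigma^2 T}$ so that the final bound has the same exponential profile in $x_0$ as the other error terms entering the proof of Theorem~\ref{THE-THEOREM}~(B).
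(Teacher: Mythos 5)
Your proof is correct and takes essentially the same route as the paper: specialize Proposition \ref{HoldKestimates} to $(h,\theta)=(h_n,\theta_n)$, use $\theta_n\le T$, and absorb the factor $e^{2\beta h_n}$ into $e^{2\beta^2\sigma^2 T}$ times an absolute constant (the paper does this tersely via $h_n\le\sigma\sqrt{T/2}$, you via Young's inequality). Your version merely makes explicit the bookkeeping the paper leaves implicit.
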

\begin{proof}
Since $h = \sigma\sqrt{\smash[b]{\frac{T}{n}}} \leq \sigma \sqrt{\smash[b]{\frac{T}{2}}}$ and $\varepsilon^{\text{loc}}_{n}(t_0,x_0) = \varepsilon^{\text{loc}}_{h, \tn}(g^{x_0})$ by Assumption \ref{assumption} and (\ref{errorlocal}), Proposition \ref{HoldKestimates} implies the result.
\end{proof}
\section{The global error}\label{Sectionglobalerror}
Our aim is to derive an upper bound for the global error
\begin{align*}
\varepsilon^{\text{glob}}_{n}(t_0,x_0) = \E[g(x_0{+}X_{\tau_{\nt}})- g(x_0 {+} X_{\tau_{J_n}})]
\end{align*}
defined in (\ref{errorglobal}), where $g$ is an exponentially bounded Borel function and $(X_{\tau_k})_{k=0,1, \dots}$ is the random walk considered in Subsection \ref{firstexittimes}. For this purpose, we need a collection of estimates related to 
the behavior of the random walk $(X_{\tau_k})$ and the stopping time $J_n$. A part of these are given in this section, while the more involved ones are presented later in Section \ref{MomenttiestimaatitJiille}.\\
\newline
\textbf{Note:} \textit{The Assumption \ref{assumption} is taken as a standing assumption throughout Section \ref{Sectionglobalerror}.}\\
\newline
Recall the definitions of $\nt$ and $\theta_n$ 
given in (\ref{deftnnt}). Recall also that $J_n(\omega) = \inf \{2m \in 2\N: \tau_{2m}(\omega) > \theta_n\}$ as was defined in (\ref{defofJ}). A result similar to the lemma below was proved in \cite[Corollary 11.4]{Walsh}.
\begin{lemma}\label{expExp}
For any $b \geq 0$, it holds that
\begin{align}
(i) & \quad \quad \E \big[e^{b |X_{\tau_{\nt}}|}\big] \leq 2 e^{b^2\sigma^2T/2} \label{AAA},\\
(ii) & \quad \quad \E \big[e^{b |X_{\tau_{\jtn}}|}\big] \leq 2 e^{b \sigma \sqrt{2T} + b^2\sigma^2T/2}. \label{BBB}
\end{align}
\end{lemma}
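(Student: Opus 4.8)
The plan is to bound the two exponential moments directly, using the independence of the process $(\Delta\tau_k)$ from the process $(\Delta X_{\tau_k})$ established in Subsection \ref{firstexittimes}, together with the fact that the number of random-walk steps involved is deterministic (for part $(i)$) or almost surely bounded by a random time close to $\nt$ (for part $(ii)$).

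First I would prove $(i)$. Here $X_{\tau_{\nt}} \stackrel{d}{=} \sigma W^n_{T-t_0} \stackrel{d}{=} h\sum_{k=1}^{\nt}\xi_k$ with $(\xi_k)$ i.i.d.\ Rademacher and $h = \sigma\sqrt{T/n}$. Since $|X_{\tau_{\nt}}| \leq \max(X_{\tau_{\nt}}, -X_{\tau_{\nt}})$ we have $e^{b|X_{\tau_{\nt}}|} \leq e^{bX_{\tau_{\nt}}} + e^{-bX_{\tau_{\nt}}}$, and by symmetry $\E[e^{b|X_{\tau_{\nt}}|}] \leq 2\,\E[e^{bX_{\tau_{\nt}}}] = 2\prod_{k=1}^{\nt}\E[e^{b h \xi_k}] = 2\,(\cosh(bh))^{\nt}$. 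Using $\cosh(x) \leq e^{x^2/2}$ gives $2\,(\cosh(bh))^{\nt} \leq 2 e^{\nt b^2 h^2/2} = 2 e^{\nt b^2 \sigma^2 T/(2n)} \leq 2 e^{b^2\sigma^2 T/2}$, since $\nt \leq n$. This yields $(i)$.

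For $(ii)$ the extra ingredient is that $\tau_{J_n}$ is the first \emph{even} stopping time exceeding $\theta_n$, so $J_n \in \{\nt, \nt+2\}$ is not deterministic but satisfies $J_n \leq \nt + 2$ almost surely. Conditioning on the path of the random walk is awkward because $J_n$ depends on the clock $(\Delta\tau_k)$; instead I would use that, by the independence of $(\Delta\tau_k)_k$ and $(\Delta X_{\tau_k})_k$, the event $\{J_n = \nt+2\}$ is independent of the entire sequence $(\Delta X_{\tau_k})_k$, hence of $X_{\tau_{\nt}}$ and of $\Delta X_{\tau_{\nt+1}}, \Delta X_{\tau_{\nt+2}}$. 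Writing $X_{\tau_{J_n}} = X_{\tau_{\nt}} + (\Delta X_{\tau_{\nt+1}} + \Delta X_{\tau_{\nt+2}})\I_{\{J_n = \nt+2\}}$, and crudely bounding $|X_{\tau_{J_n}}| \leq |X_{\tau_{\nt}}| + |\Delta X_{\tau_{\nt+1}}| + |\Delta X_{\tau_{\nt+2}}| = |X_{\tau_{\nt}}| + 2h$ pointwise, one gets $e^{b|X_{\tau_{J_n}}|} \leq e^{2bh}e^{b|X_{\tau_{\nt}}|}$. Then by $(i)$ and $h = \sigma\sqrt{T/n} \leq \sigma\sqrt{T/2} \leq \sigma\sqrt{2T}/2$ we obtain $\E[e^{b|X_{\tau_{J_n}}|}] \leq 2 e^{b\sigma\sqrt{2T} + b^2\sigma^2 T/2}$, which is $(ii)$.

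The only mild subtlety — and the point I would be most careful about — is justifying that the deterministic pathwise bound $|X_{\tau_{J_n}}| \leq |X_{\tau_{\nt}}| + 2h$ really holds, i.e.\ that $J_n$ exceeds $\nt$ by at most $2$. This follows from the definitions: $\theta_n = \nt T/n$ is a multiple of $2T/n$ and $\nt$ is even, so $\tau_{\nt}$ and $\tau_{\nt+2}$ are precisely the even stopping times straddling (or just after) $\theta_n$; since $\tau_{\nt} \leq \theta_n$ need not hold but $\tau_{\nt+2} > \tau_{\nt} \geq \cdots$ and $\tau_{J_n}$ is by (\ref{defofJ}) the first even one strictly above $\theta_n$, a short case check shows $J_n \in \{\nt, \nt+2\}$. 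Once that is in place the rest is the elementary $\cosh$ estimate, so no real obstacle remains; this is why the statement is recorded as a lemma rather than a proposition.
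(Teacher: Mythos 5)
Your proof of part $(i)$ is correct and is essentially the paper's own argument. Part $(ii)$, however, rests on a false claim: $J_n$ is \emph{not} bounded by $\nt+2$ almost surely. You are implicitly treating the stopping times $\tau_k$ as if they were the deterministic lattice times $t^n_k=2kT/n$, but $\tau_{\nt+2}=\sum_{j=1}^{\nt+2}\Delta\tau_j$ is a sum of i.i.d.\ first exit times, each with mean $T/n$ and full support on $(0,\infty)$. Hence $\p(\tau_{\nt+2}\leq\tn)>0$ (for large $\nt$ this probability is close to $1/2$ by the central limit theorem), and on that event $J_n>\nt+2$. Indeed, the fluctuations of $J_n-\nt$ are of order $\sqrt{\nt}$ — controlling them is the entire purpose of Section \ref{MomenttiestimaatitJiille} (see Proposition \ref{PropAzumaMoments}) — so your pointwise bound $|X_{\tau_{J_n}}|\leq|X_{\tau_{\nt}}|+2h$ fails, and with it the rest of your argument for $(ii)$.

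The paper's proof of $(ii)$ avoids this by comparing $X_{\tau_{J_n}}$ not with the random walk at step $\nt$ but with the Brownian motion at the deterministic time $\tn$. By the definitions of $J_n$ and of $L$ in (\ref{DefOfL}), the value $X_{\tn}$ lies within $h$ of $X_{\tau_L}$, and $\tau_{J_n}$ is at most two exit times after $\tau_L$, so $|X_{\tau_{J_n}}-X_{\tn}|$ is bounded pointwise by a fixed multiple of $h$. One then writes
\begin{align*}
\E\big[e^{b|X_{\tau_{J_n}}|}\big]\leq \E\big[e^{b|X_{\tau_{J_n}}-X_{\tn}|+b|X_{\tn}|}\big]\leq e^{2bh}\,\E\big[e^{b\sigma\sqrt{\tn}\,|Z|}\big]\leq 2e^{b\sigma\sqrt{2T}+b^2\sigma^2T/2},
\end{align*}
using the Gaussian bound $\E[e^{u|Z|}]\leq 2e^{u^2/2}$. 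If you want to repair your approach, replace your (invalid) step-count bound on $J_n$ by this comparison with $X_{\tn}$.
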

\begin{proof}
$(i)$: Since $X_{\tau_{\nt}} = \sum_{k=1}^{\nt} \Delta X_{\tau_k}$, where $(\Delta X_{\tau_k})_{k=1,2, \dots}$ is a sequence of i.i.d. random variables with
$\p(\Delta X_{\tau_k} = \pm h) = 1/2$ for $h = \sigma\sqrt{\smash[b]{\frac{T}{n}}}$ (see Subsection \ref{firstexittimes}),
\begin{align*}
\E \big[e^{b|X_{\tau_{\nt}}|}\big] \leq 2 \E \big[e^{b X_{\tau_{\nt}}}\big] = 2 \pr{\E \big[e^{b \Delta X_{\tau_1}} \big]}^{\nt}= 2 \pr{\cosh(bh)}^{\nt} \leq 2 e^{b^2 h^2 \nt/2} \leq 2 e^{b^2 \sigma^2 T/2}
\end{align*}
by the inequality $\cosh(x) \leq e^{x^2/2}$, $x \in \R$.

$(ii)$: Firstly, observe that by the definition of $J_n$ we have $\abs{X_{\tau_{\jtn}}{-} X_{\tn}} \leq 2h$. Secondly, since for a standard normal $Z$ random variable it holds that $\E\big[ e^{u \abs{Z}}\big] \leq 2e^{u^2/2}$ ($u \in \R$),
\begin{align*}
\E\big[e^{b |X_{\tau_{\jtn}}|}\big] \leq \E\big[e^{b |X_{\tau_{\jtn}}-X_{\tn}| + b\abs{X_{\tn}}}\big] & \leq e^{2bh} \E[e^{b  \sigma\sqrt{\tn} \abs{Z}}] \leq 2e^{b\sigma \sqrt{2T} + b^2\sigma^2 T/2}.
\end{align*}
\end{proof}
In Proposition \ref{FourEstimatesForTheGlobalError}, we present some more upper bounds which are used to estimate the global error. 
\begin{proclaim}\label{FourEstimatesForTheGlobalError}
\hspace{0.1mm}
\begin{itemize}
\item[$(i)$]
Suppose that $p \geq 0$, $g \in \EB$, and that $b \geq 0$ is as in (\ref{exponentialboundednesscondition}). Then there exists a constant $C_{p} > 0$ such that for all $x_0 \in \R$,
\begin{align}\label{DDD}
\sup_{(n,t_0) \in 2\N{\times}[0,T)} \abs{\E\pl{\pr{ \tfrac{|X_{\tau_{{\nt}}}|}{\sqrt{\sigma^2 \tn}}}^p g(x_0{+}X_{\tau_{{\nt}}})}} \leq C_{p} e^{b\abs{x_0} + b^2\sigma^2 T}.
\end{align}
\end{itemize}
Moreover, for every $p > 0$ there exists a constant $C_p > 0$ such that 
\begin{align}
(ii) & \quad \quad \sup_{(n,t_0) \in 2\N{\times}[0,T)} \nt^{p} \p\big(\big|X_{\tau_{\nt}}/h\big| > \nt^{3/5}\big) \leq C_p, \quad \quad \quad \quad \label{EEE}\\
(iii) & \quad \quad \sup_{(n,t_0) \in 2\N{\times}[0,T)} \nt^{p} \p\big(\abs{J_n - \nt} > \nt^{3/5}\big) \leq C_p. \quad \quad \quad \quad \label{FFF}
\end{align}
\end{proclaim}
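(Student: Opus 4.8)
The plan is to reduce all three claims to standard tail and moment estimates for sums of i.i.d.\ random variables. Under Assumption \ref{assumption}, write $\delta := T/n$, so that $h = \sigma\sqrt{\delta}$ and $\sigma^2\tn = \nt h^2$. By the construction in Subsection \ref{firstexittimes} one has the equalities in law $X_{\tnt} \stackrel{d}{=} h\,S_{\nt}$, where $S_m := \sum_{k=1}^{m}\xi_k$ is a Rademacher walk, and $\tau_k = \delta\sum_{j=1}^{k}\eta_j$ with $(\eta_j)_{j\geq1}$ i.i.d., $\eta_1 \stackrel{d}{=} \tau_{(-h,h)}/\delta$. The normalization by $\delta$ is the crucial point: by (\ref{tauhoomoments}) (with $C_1 = 1$, $C_2 = 5/3$) the law of $\eta_1$ does not depend on $n$, with $\E[\eta_1] = 1$, $\var(\eta_1) = 2/3$ and $\E[\eta_1^{2q}] = C_{2q} < \infty$ for every $q\in\N$; moreover $\tn/\delta = \nt$, i.e.\ $\E[\tnt] = \tn$. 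Together with $X_{\tnt}/\sqrt{\sigma^2\tn} \stackrel{d}{=} S_{\nt}/\sqrt{\nt}$ and $X_{\tnt}/h \stackrel{d}{=} S_{\nt}$, this is all the probabilistic input needed, and it is what makes the uniformity over $(n,t_0)$ automatic.

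For $(i)$ I would bound $\abs{g(x_0{+}X_{\tnt})} \leq A e^{b\abs{x_0}}e^{b\abs{X_{\tnt}}}$ (valid since $g\in\EB$) and apply the Cauchy--Schwarz inequality to write the left-hand side of (\ref{DDD}) as at most $A e^{b\abs{x_0}}\pr{\E\left[(\abs{S_{\nt}}/\sqrt{\nt})^{2p}\right]}^{1/2}\pr{\E\left[e^{2b\abs{X_{\tnt}}}\right]}^{1/2}$. Khintchine's inequality gives $\E\abs{S_m}^{2p} \leq c_p m^{p}$ with $c_p$ universal, so the first factor is at most $\sqrt{c_p}$; the second is at most $\sqrt 2\, e^{b^2\sigma^2T}$ by Lemma \ref{expExp} $(i)$ applied with $2b$ in place of $b$. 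This yields (\ref{DDD}) with $C_p = A\sqrt{2c_p}$. For $(ii)$, since $\abs{X_{\tnt}/h}\stackrel{d}{=}\abs{S_{\nt}}$, Markov's inequality at order $2q$ together with Khintchine gives $\pp{\abs{S_{\nt}} > \nt^{3/5}} \leq c_q\,\nt^{q}/\nt^{6q/5} = c_q\,\nt^{-q/5}$, hence $\nt^{p}\,\pp{\abs{S_{\nt}} > \nt^{3/5}} \leq c_q\,\nt^{p-q/5}$, which is $\leq c_q$ as soon as $q \geq 5p$. (A Hoeffding bound $\pp{\abs{S_{\nt}} > \nt^{3/5}} \leq 2e^{-\nt^{1/5}/2}$ works equally well.)

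For $(iii)$ I would first note that for $\nt$ below any fixed bound the left-hand side of (\ref{FFF}) is trivially controlled, so it suffices to treat $\nt$ large. Set $Y_k := \sum_{j=1}^{k}(\eta_j - 1) = \tau_k/\delta - k$. Since $J_n$ is the first \emph{even} index with $\tau_{J_n} > \tn$ and $k\mapsto\tau_k$ is nondecreasing, on $\{J_n > \nt + \nt^{3/5}\}$ one has $\tau_{k_+} \leq \tn$, where $k_+$ is the largest even integer $\leq \nt + \nt^{3/5}$; equivalently $Y_{k_+} \leq \nt - k_+ \leq -\nt^{3/5} + 2 \leq -\tfrac12\nt^{3/5}$ (for $\nt$ large). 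Symmetrically, on $\{J_n < \nt - \nt^{3/5}\}$ one has $\tau_{k_-} > \tn$ with $k_-$ the largest even integer $\leq \nt - \nt^{3/5}$, hence $Y_{k_-} > \nt - k_- \geq \nt^{3/5} \geq \tfrac12\nt^{3/5}$. Thus $\pp{\abs{J_n - \nt} > \nt^{3/5}} \leq \pp{\abs{Y_{k_+}} \geq \tfrac12\nt^{3/5}} + \pp{\abs{Y_{k_-}} \geq \tfrac12\nt^{3/5}}$, with $1 \leq k_\pm \leq 2\nt$. By Rosenthal's inequality (or Marcinkiewicz--Zygmund) applied to the mean-zero i.i.d.\ variables $\eta_j - 1$, whose $2q$-th moments are finite by (\ref{tauhoomoments}), there is a constant $c_q$ independent of $n$ and $k$ with $\E\abs{Y_k}^{2q} \leq c_q k^{q}$ for $k \geq 1$; Markov's inequality at order $2q$ then gives $\pp{\abs{Y_{k_\pm}} \geq \tfrac12\nt^{3/5}} \leq c_q(2\nt)^{q}2^{2q}/\nt^{6q/5} = c_q'\,\nt^{-q/5}$, so $\nt^{p}\,\pp{\abs{J_n - \nt} > \nt^{3/5}} \leq 2c_q'\,\nt^{p-q/5}$, which is bounded once $q \geq 5p$.

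The one genuinely delicate step is the combinatorial reduction in $(iii)$: passing from the event that the random index $J_n$ deviates from $\nt$ by more than $\nt^{3/5}$ to a deviation of the centered renewal sum $Y_{k_\pm}$ of the same order, keeping careful track of the even-integer rounding of both $J_n$ and $\nt$, and --- crucially --- using $\E[\tnt] = \tn$ so that this reduction produces no residual linear drift term that would swamp the $\nt^{3/5}$ gap. Everything else (Khintchine, Rosenthal/Marcinkiewicz--Zygmund, Hoeffding, Lemma \ref{expExp}) is off-the-shelf, and uniformity over $(n,t_0)$ comes for free because, after rescaling by $\delta$, the increments $\eta_j$ and $\xi_j$ have laws that do not depend on $n$.
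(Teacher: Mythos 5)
Your proposal is correct. Parts $(i)$ and $(ii)$ follow essentially the paper's own argument: the paper also reduces to the normalized walk $S_{\nt} = X_{\tau_{\nt}}/\sqrt{\sigma^2\tn}$, obtains uniform moment bounds $\E|S_{\nt}|^{2p}\leq \tilde C_{2p}$ (via a subgaussian tail estimate rather than Khintchine, which is the same computation), and concludes by H\"older together with Lemma \ref{expExp} $(i)$ for $(i)$, and by Markov at a high enough order for $(ii)$. The only genuine divergence is in $(iii)$. The paper proves $(iii)$ by Markov's inequality combined with the moment estimate $\E|J_n-\nt|^{K}\leq C_K\nt^{K/2}$ of Proposition \ref{PropAzumaMoments}, which is itself established through Chernoff-type bounds on $\tau_{\nt\xi}$ using the explicit moment generating function (\ref{laplacecoshcos}) (Lemmas \ref{Lemmatechnical} and \ref{J}) plus an Azuma--Hoeffding argument for the far tail. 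You instead convert $\{|J_n-\nt|>\nt^{3/5}\}$ directly into a deviation event for the centered, $\delta$-rescaled renewal sum $Y_k=\tau_k/\delta-k$ at the deterministic rounded indices $k_{\pm}$, and then apply Rosenthal (or Marcinkiewicz--Zygmund) and Markov; the key observations --- that the law of $\eta_1=\Delta\tau_1/\delta$ is independent of $n$ with mean $1$ and all moments finite, and that $\tn/\delta=\nt$ exactly, so no drift term survives --- are sound, and the even-integer bookkeeping for $k_{\pm}$ is handled correctly (with the degenerate small-$\nt$ cases disposed of trivially). Your route for $(iii)$ is more elementary and avoids the exponential machinery entirely; what the paper's heavier route buys is the polynomial moment bound (\ref{CCC}) itself, which is needed independently elsewhere (in Lemma \ref{Q3isok} and in the proof of Proposition \ref{globerrorfinally}), so the paper cannot dispense with Proposition \ref{PropAzumaMoments} even if $(iii)$ could.
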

\begin{proof}
$(i)$: Observe that
\begin{align*}
S_{\nt} := \frac{X_{\tnt}}{\sqrt{\sigma^2 \tn}} = \frac{1}{\sqrt{\sigma^2 \tn}}\sum_{k=1}^{\nt} \Delta X_{\tau_k} \stackrel{d}{=} \frac{1}{\sqrt{\nt}} \sum_{k=1}^{\nt} \xi_i,
\end{align*}
where $(\xi_i)_{i=1,2, \dots}$ is an i.i.d.~Rademacher sequence (see Subsection \ref{firstexittimes}). Hence,
\begin{align*}
\E\pl{e^{t S_{\nt}}} = (\cosh(\tfrac{t}{\sqrt{\nt}}))^{\nt} \leq (e^{t^2/(2\nt)})^{\nt} = e^{t^2/2}, \quad t \in \R.
\end{align*}
Consequently, by the symmetry of $S_{\nt}$ and Markov's inequality,
\begin{align*}
\p(\abs{S_{\nt}} > t) & = 2 \p(e^{t S_{\nt}} > e^{t^2}) \leq 2 e^{-t^2} \E\pl{e^{t S_{\nt}}} \leq 2e^{-t^2/2}, \quad t > 0,
\end{align*}
and thus, uniformly in $(n,t_0)$, for $p > 0$,
\begin{align}\label{GGG}
\E |S_{\nt}|^{p} = p\int_{0}^{\infty} t^{p-1} \p(\abs{S_{\nt}} > t) dt \leq 2p \int_{0}^{\infty} t^{p-1} e^{-t^2/2} dt := \tilde{C}_p < \infty.
\end{align}
H\"older's inequality, (\ref{GGG}), and (\ref{AAA}) then imply that
\begin{align*}
\abs{\E\pl{\abs{S_{\nt}}^p g(x_0{+}X_{\tau_{{\nt}}})}} & \leq Ae^{b|x_0|} \pr{\E|S_{\nt}|^{2p}}^{1/2} \big(\E\big[e^{2b|X_{\tau_{\nt}}|}\big]\big)^{1/2}
\leq 2A \tilde{C}_{2p}^{1/2} e^{b|x_0|+b^2\sigma^2T}.
\end{align*}
This proves (\ref{DDD}) for $p > 0$, and the case $p = 0$ can be seen from the last line as well.

$(ii)$: Since $h\sqrt{\nt} = \sqrt{\sigma^2 \tn}$, by Markov's inequality and (\ref{GGG}) we obtain
\begin{align}\label{kiireformula1}
\p\big(|X_{\tau_{\nt}}/h| > \nt^{3/5}) = \p \big(\abs{S_{\nt}} > \nt^{1/10} \big) \leq \E\abs{S_{\nt}}^{q} \nt^{-q/10} \leq C_q \nt^{-q/10}
\end{align}
for all $q > 0$. Choose $q \geq 10p$ and multiply both sides of (\ref{kiireformula1}) by $\nt^{p}$ to obtain (\ref{EEE}). 

$(iii)$: For every $K > 0$, Markov's inequality and Proposition \ref{PropAzumaMoments} below imply that
\begin{align}\label{azumanicebound}
\p\big(\abs{J_n - \nt} > \nt^{3/5}\big) \leq \E\abs{J_n - \nt}^K \nt^{-3K/5}  \leq C_{K} \nt^{-K/10}
\end{align}
for some constant $C_K > 0$. For given $p > 0$, it remains to choose $K \geq 10p$ and multiply both sides of (\ref{azumanicebound}) by $\nt^{p}$. 
\end{proof}
The proof of the main result of this section follows closely the proof of \cite[Theorem 8.1]{Walsh}.
\begin{proclaim}\label{globerrorfinally}
Let $n \in 2\N$. Suppose that $g \in \EB$ and that $b \geq 0$ is as in (\ref{exponentialboundednesscondition}). Then there exists a constant $C > 0$ such that for all 
$(t_0,x_0) \in [0,T) \times \R$,
\begin{align}\label{glob-error-upper-bound}
\abs{\varepsilon^{\text{glob}}_n(t_0,x_0)} \leq \frac{C T}{n(T-t^n_k)} e^{b\abs{x_0} +3 b^2 \sigma^2 T}, \quad t_0 \in [t^n_k, t^n_{k+1}), \quad 0 \leq k < \tfrac{n}{2}.
\end{align}
\end{proclaim}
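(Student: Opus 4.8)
The plan is to Taylor-expand $g(x_0 + X_{\tau_{J_n}})$ around $g(x_0 + X_{\tau_{\nt}})$, where the difference $X_{\tau_{J_n}} - X_{\tau_{\nt}}$ is controlled by the (random) number of extra steps $J_n - \nt$, each of size $\pm h$. Since $g$ is merely exponentially bounded (not smooth), I cannot literally differentiate; instead I would use the random-walk structure directly. Write $X_{\tau_{J_n}} - X_{\tau_{\nt}} = h\sum_{i} \eta_i$, a sum of $|J_n - \nt|$ i.i.d. Rademacher steps, which conditionally on $J_n$ is a centered random walk. The key point, following \cite[Theorem 8.1]{Walsh}, is that the global error is a ``second-order'' quantity: the first-order contribution vanishes because the extra steps are symmetric and, crucially, the sign pattern of the increments past $\tau_{\nt}$ is independent of how many steps are taken (the processes $(\Delta\tau_k)$ and $(\Delta X_{\tau_k})$ are independent, as recalled in Subsection~\ref{firstexittimes}). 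Hence one expects a bound of order $\E|J_n - \nt|\cdot h^2 \sim h^2 = \sigma^2 T/n$, and since $\nt T = n(T - t^n_k)$ this is exactly $\sigma^2 T^2 / (n \cdot n(T-t^n_k)) \cdot \nt$, i.e. of the claimed order after accounting for $\nt \le n$.

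\textbf{Main steps.} First, I would condition on $\F_{\tau_{\nt}}$ and on the value of $J_n$, using that the post-$\tau_{\nt}$ increment signs form a fresh Rademacher sequence independent of $J_n$; this reduces the problem to estimating $\E[\,\E[g(z + h S_{N})] - g(z)\,]$ where $z = x_0 + X_{\tau_{\nt}}$, $N = |J_n - \nt|$, and $S_N$ is a simple random walk run for $N$ steps. Second, on the ``good'' event $\{|J_n - \nt| \le \nt^{3/5}\} \cap \{|X_{\tau_{\nt}}/h| \le \nt^{3/5}\}$ I would expand: because the step is $\pm h$ and the number of steps is even or can be paired, the symmetric increments produce cancellation at first order, leaving a term controlled by $h^2 \E|J_n - \nt|$ times an exponential-moment factor; here I would invoke Lemma~\ref{expExp}$(i)$ for $\E[e^{b|X_{\tau_{\nt}}|}]$ and the moment estimate on $|J_n - \nt|$ (Proposition~\ref{PropAzumaMoments}, used via \eqref{FFF}). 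Third, on the complementary ``bad'' event I would simply bound $|g(x_0 + X_{\tau_{J_n}}) - g(x_0 + X_{\tau_{\nt}})|$ crudely by $Ae^{b|x_0|}(e^{b|X_{\tau_{J_n}}|} + e^{b|X_{\tau_{\nt}}|})$, apply Hölder with the exponential-moment bounds from Lemma~\ref{expExp}, and multiply by the tail probabilities $\p(|J_n - \nt| > \nt^{3/5})$ and $\p(|X_{\tau_{\nt}}/h| > \nt^{3/5})$ from Proposition~\ref{FourEstimatesForTheGlobalError}$(ii)$--$(iii)$; choosing the polynomial decay rate there large enough makes this contribution $O(\nt^{-M})$ for any $M$, in particular negligible compared to $\nt^{-1}$. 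Finally I would collect the bounds, replace $\nt^{-1}$ by $T/(n(T-t^n_k))$ using $\nt T = n(T-t^n_k)$, and absorb the exponential prefactors into $e^{b|x_0| + 3b^2\sigma^2 T}$ (the exponent $3b^2\sigma^2 T$ leaving room for the Hölder-conjugate inflation of $b$ to $2b$ in the moment bounds).

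\textbf{Main obstacle.} The delicate point is the cancellation argument that upgrades the naive first-order bound $O(h \cdot \E|J_n-\nt|^{1/2})$ to the second-order bound $O(h^2 \cdot \E|J_n - \nt|)$ — i.e. extracting the extra factor of $h$. This is where the independence of the increment \emph{signs} from the stopping index $J_n$ is essential: it lets one symmetrize the extra steps and kill the odd-order terms, exactly as in Walsh's treatment. Without $g$ being differentiable, this must be done by a direct pairing/reflection argument on the random-walk paths rather than by a Taylor remainder, and keeping the exponential weights under control throughout the symmetrization is the technically fussy part. The tail estimates and moment bounds are then routine given the results already established in Section~\ref{Sectionglobalerror} and Section~\ref{MomenttiestimaatitJiille}.
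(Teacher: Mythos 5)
Your overall architecture — split along the good event $\Gamma_{\nt}$, a crude H\"older-plus-tail bound on its complement via Lemma \ref{expExp} and Proposition \ref{FourEstimatesForTheGlobalError}, and exploitation of the independence of the increment signs from $J_n$ on the good event — is the paper's, and your treatment of the bad event is exactly Lemma \ref{Q3isok}$(i)$. The gap is in the good-event analysis, where the cancellation mechanism you describe is both misidentified and quantitatively insufficient. The bound you aim for, $h^2\,\E|J_n-\nt|$, does not yield the claim: your heuristic treats $\E|J_n-\nt|$ as $O(1)$, but Proposition \ref{PropAzumaMoments} only gives $\E|J_n-\nt|\leq C\sqrt{\nt}$, and this is the true order (the variance of $J_n$ is of order $\nt$ by Proposition \ref{jtntjn2}$(ii)$); hence $h^2\,\E|J_n-\nt|\asymp \sigma^2 T\sqrt{\nt}/n$, which at $t_0=0$ is of order $n^{-1/2}$, far larger than the claimed $n^{-1}$. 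What actually saves the proof is that the sums are kept \emph{signed} in $k=J_n-\nt$ and, by the independence of $J_n$ from the process $(X_{\tau_k})_{k\geq 0}$, factor into products of moments (identity (\ref{globercompformula})); the surviving contributions are $\E[J_n-\nt]/\nt$ and $\E[(J_n-\nt)^2]/\nt^2$, and one needs the asymptotics $\E[J_n-\nt]\to 4/3$ and $\E\big[(J_n-\nt)^2/\nt\big]\to 2/3$ of Proposition \ref{jtntjn2}, a substantial piece of Section \ref{MomenttiestimaatitJiille} resting on Lemmas \ref{PropJ}--\ref{Firstandsecondintegral} that your outline never invokes. In particular, the first-order term does \emph{not} vanish by symmetry — $\E[J_n-\nt]\approx 4/3\neq 0$ — and it is precisely this term that produces the leading $O(1/\nt)$ contribution.

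Moreover, for a merely Borel $g$ no pointwise expansion of $\E[g(z+hS_N)]-g(z)$ to second (or even first) order in $h$ is available: take $g=\I_{[0,\infty)}$ with $z$ at the jump, where the discrepancy is $O(1)$ however small $h$ is. So a ``pairing/reflection argument on the random-walk paths'' cannot substitute for a Taylor remainder; the smoothing must come from the law of $X_{\tau_{\nt}}$ itself. The paper implements this by writing $\E[g(x_0+X_{\tau_{\nt+k}})]-\E[g(x_0+X_{\tau_{\nt}})]=\sum_x g(x_0+xh)\,P_{\nt}(x)\big(R(\nt,k,x)-1\big)$ and invoking Walsh's expansion of the binomial likelihood ratio $R=P_{\nt+k}/P_{\nt}$ (Proposition \ref{binomialratio}), whose polynomial part $R^{(1)}$ is then integrated against the product law of $(X_{\tau_{\nt}}, J_n-\nt)$. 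This likelihood-ratio expansion, together with the moment asymptotics of Proposition \ref{jtntjn2}, is the missing core of the argument.
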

\begin{proof}
Define a set
\begin{align}\label{GammaJoukko}
\Gamma_{\nt} := \big\{\big|X_{\tau_{\nt}}/h\big| \vee \abs{J_{\nt}{-}\nt} \leq \nt^{3/5}\big\}
\end{align}
and decompose the error $\varepsilon^{\text{glob}}_n(t_0,x_0)$ into the sum of expectations $E^{(1)}$ and $E^{(2)}$, where
\begin{align}
E^{(1)} := \E[g(x_0{+}X_{\tau_{\nt}})-g(x_0 {+} X_{\tau_{J_n}});\Gamma_{\nt}], \quad E^{(2)} := \E[g(x_0{+}X_{\tau_{\nt}})-g(x_0 {+} X_{\tau_{J_n}});\Gamma_{\nt}^{\complement}]. \label{GammaExpectSum}
\end{align}
Using the estimates of Lemma \ref{expExp} and Proposition \ref{FourEstimatesForTheGlobalError}, it can be shown that
\begin{align}\label{E2poisGE}
\big|E^{(2)}\big| \leq \tilde{C}_0 \nt^{-3/2} e^{b\abs{x_0} + b^2 \sigma^2 T + b\sigma \sqrt{2 T}}
\end{align}
for some constant $\tilde{C}_0 > 0$; this is done in Lemma \ref{Q3isok} $(i)$. Estimation of $\abs{E^{(1)}}$ requires more subtlety. Denote the probability mass functions of $X_{\tau_{\nt + k}}/h$ and $J_n{-}\nt$ by
\begin{align}\label{pmfs}
P_{\nt + k}(x) & := \p(X_{\tau_{\nt + k}} = hx) \quad \text{and } \quad P^{J}_{\nt}(x) := \p(J_{n}{-}\nt = x), \quad \quad x \in \Z.
\end{align}
By Lemma \ref{Q3isok} $(ii)$, there exists a constant $\tilde{C}_1 > 0$ such that
\begin{align}\label{toughproofpt1CHANGED}
\big|E^{(1)}\big| & \leq \bigg|\sum_{k=2-\nt}^{\infty} \sum_{x= -\nt}^{\nt} g(x_0{+}xh) P^{J}_{\nt}(k) P_{\nt}(x)\Big(\frac{k}{2\nt} - \frac{3k^2 + 4kx^2}{8\nt^2} + \frac{3k^2 x^2}{4 \nt^3} - \frac{k^2 x^4}{8 \nt^4} \Big)\bigg| \nonumber\\
& \quad \quad + \tilde{C}_1 \nt^{-3/2} e^{b\abs{x_0} + b^2 \sigma^2 T}.
\end{align}
Next, we use relation (\ref{globercompformula}) in order to rewrite the double sum on the right-hand side of (\ref{toughproofpt1CHANGED}) as
\begin{align}\label{longrimpsu}
& E^{(3)} := \sum_{k=2-\nt}^{\infty} \sum_{x= -\nt}^{\nt} g(x_0{+}xh) P^{J}_{\nt}(k) P_{\nt}(x) \Big(\frac{k}{2\nt} - \frac{3k^2 + 4kx^2}{8\nt^2} + \frac{3k^2 x^2}{4 \nt^3} - \frac{k^2 x^4}{8\nt^4} \Big) \nonumber\\
& = \frac{1}{{\nt}} \Bigg\{ \frac{1}{2} \E \pl{g(x_0{+}X_{\tau_{{\nt}}})} \E [J_n{-}\nt] - \frac{3}{8} \E \pl{g(x_0{+}X_{\tau_{{\nt}}})} \E \pr{\tfrac{J_n{-}\nt}{\sqrt{\nt}}}^2 \nonumber\\
& \quad \quad - \frac{1}{2}\E\big[\pr{\tfrac{X_{\tau_{{\nt}}}}{\sqrt{\sigma^2 \tn}}}^2 g(x_0{+}X_{\tau_{{\nt}}})\big] \E[J_n{-}\nt]
 {+} \frac{3}{4} \E\big[\pr{\tfrac{X_{\tau_{{\nt}}}}{\sqrt{\sigma^2 \tn}}}^2 g(x_0{+}X_{\tau_{{\nt}}}) \big] \E \pr{\tfrac{J_n{-}\nt}{\sqrt{\nt}}}^2 \nonumber\\
& \quad \quad \quad - \frac{1}{8} \E\pl{\pr{ \tfrac{X_{\tau_{{\nt}}}}{\sqrt{\sigma^2 \tn}}}^4 g(x_0{+}X_{\tau_{{\nt}}})} \E\pr{\tfrac{J_n{-}\nt}{\sqrt{\nt}}}^2\Bigg\}\nonumber\\
& = \frac{1}{{\nt}} \Bigg\{ \E \pl{g(x_0{+}X_{\tau_{{\nt}}})} \pr{\frac{1}{2}\E [J_n{-}\nt] - \frac{3}{8} \E \pr{\tfrac{J_n{-}\nt}{\sqrt{\nt}}}^2 }\nonumber\\
& \quad \quad + \E\big[\pr{\tfrac{X_{\tau_{{\nt}}}}{\sqrt{\sigma^2 \tn}}}^2 g(x_0{+}X_{\tau_{{\nt}}})\big] \pr{\frac{3}{4} \E \pr{\tfrac{J_n{-}\nt}{\sqrt{\nt}}}^2- \frac{1}{2} \E[J_n{-}\nt]}\nonumber\\
& \quad \quad - \frac{1}{8} \E\pl{\pr{ \tfrac{X_{\tau_{{\nt}}}}{\sqrt{\sigma^2 \tn}}}^4 g(x_0{+}X_{\tau_{{\nt}}})} \E\pr{\tfrac{J_n{-}\nt}{\sqrt{\nt}}}^2 \Bigg\}.
\end{align}
By Proposition \ref{jtntjn2}, there exist constants $c_1, c_2 > 0$ such that $\abs{\E[J_n{-}\nt] - \frac{4}{3}} \leq \frac{c_1}{\sqrt{\nt}}$ and\\ $\E\big[\big(\tfrac{J_n{-}\nt}{\sqrt{\nt}}\big)^2 - \frac{2}{3}\big] \leq \frac{c_2}{\sqrt{\nt}}$, and thus
\begin{align*}
& \Big|\frac{1}{2}\E [J_n{-}\nt] - \frac{3}{8} \E \pr{\tfrac{J_n{-}\nt}{\sqrt{\nt}}}^2 -\frac{5}{12}\Big| \leq \frac{c_1+c_2}{\sqrt{\nt}}, \quad \Big|\frac{1}{8}\E\pr{\tfrac{J_n{-}\nt}{\sqrt{\nt}}}^2 - \frac{1}{12}\Big| \leq \frac{c_2}{\sqrt{\nt}} \quad \text{ and }\\
& \Big|\frac{3}{4} \E \pr{\tfrac{J_n{-}\nt}{\sqrt{\nt}}}^2 -\frac{1}{2} \E[J_n{-}\nt] + \frac{1}{6}\Big| \leq \frac{c_1+c_2}{\sqrt{\nt}}.
\end{align*}
Consequently, by (\ref{longrimpsu}) and (\ref{DDD}), there exist constants $\tilde{C}_2, \tilde{C}_3 > 0$ such that
\begin{align}\label{structureofglobber}
\big|E^{(3)}\big| & \leq \frac{5}{12 \nt}\abs{\E \pl{g(x_0{+}X_{\tau_{\nt}})}} + \frac{1}{6 \nt}\Big|\E\Big[\Big(\tfrac{X_{\tau_{\nt}}}{\sqrt{\sigma^2 \theta_n}}\Big)^2 g(x_0{+}X_{\tau_{\nt}})\Big]\Big| \nonumber\\
& \quad \quad + \frac{1}{12 \nt} \Big|\E\Big[\Big( \tfrac{X_{\tau_{\nt}}}{\sqrt{\sigma^2 \theta_n}}\Big)^4 g(x_0{+}X_{\tau_{\nt}})\Big]\Big| + \frac{\tilde{C}_2 e^{b\abs{x_0} + b^2 \sigma^2 T}}{\nt^{3/2}} \nonumber\\
& \leq \frac{\tilde{C}_3}{\nt} e^{b|x_0|+b^2\sigma^2 T} + \frac{\tilde{C}_2 e^{b\abs{x_0} + b^2 \sigma^2 T}}{\nt^{3/2}}.
\end{align}
To complete the proof, it remains to observe that $\frac{1}{\nt^{3/2}} \leq \frac{1}{\sqrt{2}} \frac{1}{\nt}$, to combine (\ref{GammaExpectSum}), (\ref{E2poisGE}), (\ref{toughproofpt1CHANGED}), and (\ref{structureofglobber}), and to recall that $\nt T = n(T-t^n_k)$ for $t_0 \in [t^n_k, t^n_{k+1})$.
\end{proof}
\section{Moment estimates for the stopping time $J_n$}\label{MomenttiestimaatitJiille}
In this section we present moment estimates for the random variable
$J_n = \inf \set{2m \in 2\N: \tau_{2m} > \tn}$
introduced in (\ref{defofJ}), which are used for the estimation of the global error in Section \ref{Sectionglobalerror}.

\subsection{Estimates for the first and the second moment of \texorpdfstring{$J_n-\nt$}{J_n-\nt}}
The purpose of this subsection is to provide estimates for the first and the second moment of the random variable $J_n - \nt$.
We begin by deriving an estimate for the expectation $\E[\tau_{J_n}]$ and then use martingale techniques to
obtain estimates for $\E[J_n] - \nt$ and for $\E(J_n - \nt)^2$. The results of this subsection are closely related to \cite[Proposition 11.2]{Walsh}. 
Recall the random times $J = \inf \set{2m \in 2\N: \tau_{2m} > \TT}$ and $L = \sup\set{m \in \N_{0}: \tau_m < \TT}$
defined for each $(h,\theta) \in (0,\infty) {\times} (0,T]$ in (\ref{Jwithoutn}) and (\ref{DefOfL}). Recall also the functions $q$, $d_o, d_e$, and $\varrho$, defined in (\ref{defq}), (\ref{defofdode}), and (\ref{varrhon}), respectively.
\begin{proclaim}\label{simplify}
Suppose that $(h,\TT) \in (0,\infty) {\times} (0,T]$. Then
\begin{align}
(i) & \quad \E \pl{\tau_{J} - \TT | \F_{\TT}} = \sigma^{-2} (h^2-d^2_o(X_{\TT})) \quad \p\text{-a.s. on } \{L \text{ odd}\}, \nonumber\\
(ii) & \quad \E \pl{\tau_{J} - \TT | \F_{\TT}} = \sigma^{-2}(2h^2 - d^2_e(X_{\TT})) \quad \p\text{-a.s. on } \{L \text{ even}\}, \nonumber\\
(iii) & \quad \frac{\E[J(\tau_J - \TT)]}{\E[\tau_1] \E[J]} \in [0, 2], \nonumber\\
(iv) & \quad \E[\tau_{J} - \TT|X_{\TT} = x] = \sigma^{-2}(h^2-d^2_o(x))\big(1-q(x)\big) + \sigma^{-2}(2h^2 - d^2_e(x))q(x) \nonumber\\
& \quad \text{Leb-a.e. on } \R. \nonumber
\end{align}
\end{proclaim}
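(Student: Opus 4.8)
The plan is to exploit the structure of the stopping time $\tau_J$ relative to $\theta$ via the strong Markov property, conditioning on $\F_\theta$ and analyzing separately the two events $\{L \text{ odd}\}$ and $\{L \text{ even}\}$. On $\{L \text{ odd}\}$, we know from Section 9 of \cite{Walsh} (as recalled in the proof of Proposition \ref{locexprepr}) that $X_{\tau_J} \in \Z^h_e$ is the even lattice point nearest to $X_\theta$, reached by one further exit step; concretely, $\tau_J$ is the first time after $\theta$ that the process $(X_t)$, started from $X_\theta$ at time $\theta$, exits the interval of length $2h$ centered at the even lattice point bracketing $X_\theta$ --- equivalently, exits $(X_\theta - d_e(X_\theta), X_\theta + (2h - d_e(X_\theta)))$, but reorganized so the relevant exit is governed by the odd-lattice distance. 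The cleanest route is: on $\{L \text{ odd}\}$, the last exit time $\tau_L < \theta$ lands at an odd lattice point $x_L \in \Z^h_o$, and then $\tau_J$ is the first exit of $(X_t)_{t \geq \theta}$ from the interval $(x_L - h, x_L + h)$ (an interval of length $2h$ around an odd point). By the strong Markov property and the optional stopping / Dynkin formula applied to the martingale $(X_t - X_\theta)^2 - \sigma^2(t - \theta)$, one gets
\[
\E[\tau_J - \theta \mid \F_\theta] = \sigma^{-2}\, \E\big[(X_{\tau_J} - X_\theta)^2 \mid \F_\theta\big] = \sigma^{-2}\big(h^2 - d_o^2(X_\theta)\big)
\]
on $\{L \text{ odd}\}$, since $X_{\tau_J} - X_\theta$ takes the two values determined by the distances to the endpoints $x_L \pm h$, whose squares average (after using the exit probabilities, which are linear in position) to $h^2 - d_o(X_\theta)^2$; this is the standard ``$h_1 h_2$'' identity for exit of an interval with the two sides $h - d_o(X_\theta)$ and $h + d_o(X_\theta)$... wait, one must be careful: the two distances from $X_\theta$ to $x_L - h$ and $x_L + h$ are $d_o(X_\theta)$ and $2h - d_o(X_\theta)$ only if $X_\theta$ lies in $(x_L - h, x_L+h)$, which forces $d_o(X_\theta) = |X_\theta - x_L|$. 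Then the classical identity $\E[(X_{\tau_J}-X_\theta)^2\mid \F_\theta] = d_o(X_\theta)(2h - d_o(X_\theta))$; but $d_o(2h - d_o) = h^2 - (h-d_o)^2 = h^2 - d_e^2$. Hmm --- so I would double-check the bookkeeping here: the statement claims $h^2 - d_o^2$ on $\{L \text{ odd}\}$, which suggests the relevant interval has half-widths summing appropriately to give $d_e$ and $2h - d_e$. The resolution is that on $\{L \text{ odd}\}$ the endpoint reached is even, and the ``pre-last'' odd point together with the next odd point bracket $X_\theta$ at distances $d_e(X_\theta) = h - d_o(X_\theta)$ is \emph{not} right either; the correct picture (from \cite[Sec.~9]{Walsh}) is that $\E[(X_{\tau_J} - X_\theta)^2 \mid \F_\theta]$ equals the product of the two distances from $X_\theta$ to the nearest odd points on either side, namely $d_o(X_\theta) \cdot (2h - d_o(X_\theta))$ on $\{L \text{ even}\}$ and $d_e(X_\theta)\cdot(2h - d_e(X_\theta))$ on $\{L \text{ odd}\}$; expanding, $d_e(2h-d_e) = 2h d_e - d_e^2$ and using $d_e + d_o = h$ this is $h^2 - d_o^2$, matching $(i)$, while $d_o(2h - d_o) = h^2 - d_e^2$ does \emph{not} match $(ii)$'s $2h^2 - d_e^2$ --- so on $\{L \text{ even}\}$ there is an \emph{extra full exit step} of length $h$ before the terminal one (since reaching an even point from an even-bracketed position requires first hitting an odd point, then continuing), contributing an extra $\E[\tau_{(-h,h)}] = h^2/\sigma^2$ by \eqref{tauhoomoments}, giving $h^2/\sigma^2 + \sigma^{-2}(h^2 - d_e^2) = \sigma^{-2}(2h^2 - d_e^2)$, which is exactly $(ii)$. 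So items $(i)$--$(ii)$ follow from the Markov property, Dynkin's formula for $(X_t-X_\theta)^2 - \sigma^2(t-\theta)$, the exit-position identities, and \eqref{tauhoomoments} for the extra step.

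For $(iv)$, I would simply take conditional expectations in $(i)$ and $(ii)$ given $X_\theta = x$: averaging over $\{L \text{ even}\}$ versus $\{L \text{ odd}\}$ with weights $q(x) = \p(L \text{ even} \mid X_\theta = x)$ and $1 - q(x)$ (valid Leb-a.e.\ by Proposition \ref{Propq}$(i)$, recalling $q$ is defined off the lattice $\Z^h$ which is Lebesgue-null) gives the stated formula directly; this step is purely bookkeeping, using the tower property $\E[\tau_J - \theta \mid X_\theta = x] = \E\big[\E[\tau_J - \theta \mid \F_\theta] \,\big|\, X_\theta = x\big]$ and noting $\{L \text{ even}\}$ is $\F_\theta$-measurable (indeed $L$ is a function of the path up to time $\theta$).

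For $(iii)$, the plan is to use $\E[\tau_J] = \E[J]\,\E[\tau_1]$ (Wald's identity, since $J$ is an $(\F_{\tau_k})$-stopping time and the increments $\Delta \tau_k$ are i.i.d.\ with $\E[\tau_1] = h^2/\sigma^2$ by \eqref{tauhoomoments}), but more precisely a martingale argument: $M_k := \tau_k - k\,\E[\tau_1]$ is an $(\F_{\tau_k})$-martingale, and optional stopping at $J$ (justified by uniform integrability, which follows from the moment bounds on $J_n$ established in Proposition \ref{PropAzumaMoments} and on $\tau_k$ from \eqref{laplacecoshcos}) yields $\E[\tau_J] = \E[J]\,\E[\tau_1]$. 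Then
\[
\frac{\E[J(\tau_J - \theta)]}{\E[\tau_1]\,\E[J]} = \frac{\E[J\tau_J] - \theta\,\E[J]}{\E[\tau_1]\E[J]}.
\]
To bound $\E[J\tau_J]$ I would condition appropriately; alternatively, the bracket $[0,2]$ should drop out of $(i)$--$(ii)$ combined with $\theta = \E[\tau_1]\,\E[L+\text{something}]$-type identities: on $\{L \text{ odd}\}$, $0 \leq h^2 - d_o^2 \leq h^2$, and on $\{L \text{ even}\}$, $h^2 \leq 2h^2 - d_e^2 \leq 2h^2$, so $0 \leq \sigma^2\,\E[\tau_J - \theta \mid \F_\theta] \leq 2h^2 = 2\sigma^2\E[\tau_1]$ pointwise. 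Multiplying by $J$ (which is $\F_{\tau_J}$- but not $\F_\theta$-measurable) is the subtle point: I would instead observe $J$ and $\tau_J - \theta$ are not independent, so I would bound $\E[J(\tau_J-\theta)]$ by writing $\tau_J - \theta = (\tau_J - \theta)\I_{\{L \text{ odd}\}} + (\tau_J - \theta)\I_{\{L \text{ even}\}}$ and using the pointwise bounds $0 \le \E[\tau_J-\theta\mid\F_\theta] \le 2\E[\tau_1]$ together with the fact that $\{L \text{ odd}\}, \{L \text{ even}\} \in \F_\theta$ while $J = L+1$ or $L+2$ depending on parity --- so $J$ is in fact $\F_\theta$-measurable (it is determined by $L$, hence by the path up to $\theta$), which makes $\E[J(\tau_J-\theta)] = \E[J\,\E[\tau_J - \theta\mid\F_\theta]]$ and the bound $0 \le \E[J(\tau_J-\theta)] \le 2\E[\tau_1]\E[J]$ immediate. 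I expect the main obstacle to be getting the parity bookkeeping in $(i)$--$(ii)$ exactly right --- in particular correctly identifying when the terminal exit requires one extra full step of expected duration $h^2/\sigma^2$ --- so I would present that geometric picture carefully, citing \cite[Section 9]{Walsh} for the structure of $X_{\tau_J}$ given $\F_\theta$, before the routine Dynkin-formula computation.
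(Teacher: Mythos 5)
Your proposal is correct and follows essentially the same route as the paper: identify the bracketing exit interval according to the parity of $L$ (with the extra full exit step of mean $h^2/\sigma^2$ on $\{L \text{ even}\}$, since $J=L+2$ there), apply the product-of-distances formula for the expected exit time of $X$ from an interval, obtain $(iv)$ by the tower property with weights $q$ and $1-q$, and obtain $(iii)$ from the $\F_{\theta}$-measurability of $J$ combined with the pointwise bounds $0\le \sigma^2\,\E[\tau_J-\theta\mid\F_{\theta}]\le 2h^2=2\sigma^2\E[\tau_1]$. The only blemish is expository: in resolving the bookkeeping you describe the endpoints on $\{L \text{ odd}\}$ as the ``nearest odd points'' when they are in fact the even lattice points bracketing $X_{\theta}$, but the formula $d_e(2h-d_e)=h^2-d_o^2$ you actually use is the correct one.
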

\begin{proof}
Items $(i)$, $(ii)$, and $(iv)$ are proved in \cite[p.~348 and p.~356]{Walsh}. For the convenience of the reader, we give the general idea for the proof of these statements.

$(i)$--$(ii):$ For all $k \in \Z$, let $A_{2k+1} := \set{X_{\tl} = (2k{+}1)h}$ and $B_{2k} = \set{X_{\tl} = 2kh}$.
The Markov property of $(X_t)_{t \geq 0}$ implies that $\p$-a.s~on $A_{2k+1}$,
\begin{align*}
\E\pl{\tau_J - \TT| \F_{\TT}} & = \sigma^{-2} \abs{2kh - X_{\TT}}((2k{+}2)h - X_{\TT})\\
& = \sigma^{-2} d_e(X_{\TT})(h+d_o(X_{\TT}))\\
& = \sigma^{-2} (h^2 - d_o^2(X_{\TT})).
\end{align*}
A similar observation applies to $(ii)$ by first writing $\tau_J = \tau_{J} - \tau_{J-1} + \tau_{J-1}$ on 
$B_{2k}$, since $\p$-a.s. on $B_{2k}$,
\begin{align*}
\tau_{J-1} & = \inf \set{t \geq \TT: X_t \notin ((2k{-}1)h, (2k{+}1)h)}, \text{ and }\\
\tau_J & = \inf \set{t \geq \tau_{J-1} : \abs{X_t - X_{\tau_{J-1}}} = h}.
\end{align*}

$(iii):$ Since $J$ is $\F_{\TT}$-measurable, $\E[J (\tau_J - \TT)] = \E[J \E[\tau_J - \TT|\F_{\TT}]]$. By $(i)$ and $(ii)$,
\begin{align*}
\E\pl{\tau_J - \TT|\F_{\TT}}\IN{L \text{ odd\,}} & = \sigma^{-2}(h^2 - d^2_o(X_{\TT}))\IN{L \text{ odd}} \quad \p\text{-a.s.}, \quad \text{ and }\\
\E\pl{\tau_J - \TT|\F_{\TT}}\IN{L \text{ even}} & = \sigma^{-2}(h^2 - d^2_o(X_{\TT}))\IN{L \text{ even}} + 2\sigma^{-2}hd_o(X_{\TT})\IN{L \text{ even}} \quad \p\text{-a.s.},
\end{align*}
where we used the equality $d^2_e(x) = (h-d_o(x))^2, x \in \R$. Consequently, 
\begin{align*}
\sigma^2 \E\pl{J (\tau_J - \TT)} & = \E\pl{J(h^2 - d_o^2(X_{\TT}))} + 2h \E[J d_o(X_{\TT})\IN{L \text{ even}}]\\
& \leq h\E\pl{J(h - d_o^2(X_{\TT})/h + 2d_o(X_{\TT}))}.
\end{align*}
Since $d_o \in [0,h]$, $\E[\tau_1] = (h/\sigma)^2$, and $J \geq 0$, the lower bound is clear. For the upper bound, it suffices to further observe that
$h - x^2/h + 2x \leq 2h$ for $x \in [0,h]$.

$(iv):$ By the tower property of the conditional expectation and items $(i)$ and $(ii)$, $\p$-a.s,
\begin{align*}
\E[\tau_{J} - \TT|X_{\TT}] & = \E\pl{ \E[\tau_{J} - \TT|\F_{\TT}] \IN{L \text{ odd}} + \E[\tau_{J} - \TT|\F_{\TT}] \IN{L \text{ even}}\Big|X_{\TT}}\\
& = \sigma^{-2} (h^2{-}d^2_o(X_{\TT}))\p(L \text{ odd}|X_{\TT}) + \sigma^{-2}(2h^2{-}d^2_e(X_{\TT}))\p(L \text{ even}|X_{\TT}).
\end{align*}
Moreover, $q(x) = \p(L \text{ odd}|X_{\TT} = x) = 1 - \p(L \text{ even}|X_{\TT} = x)$ on $\R \backslash \Z^h$ by (\ref{PropqI}),
and the claim follows.
\end{proof}

\begin{lemma}\label{PropJ}
Suppose that Assumption \ref{assumption} holds. Then for all $(n,t_0) \in 2\N \times [0,T)$,
\begin{align}\label{PropJestimaaatti}
\abs{\E \big[ \tau_{J_n} \big] - \tn - \frac{4}{3}\frac{T}{n}} \leq \frac{48}{\sqrt{\nt}} \frac{T}{n}.
\end{align}
\end{lemma}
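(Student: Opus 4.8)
The plan is to integrate the conditional‑expectation formula of Proposition~\ref{simplify}~$(iv)$ against the Gaussian density $p=p(\,\cdot\,,\tn)$ of $X_{\tn}$. First I would substitute $d_e(x)=h-d_o(x)$ and, using~(\ref{varrhon}), $q(x)=d_o(x)/h+\varrho(x)$ into the right‑hand side of~\ref{simplify}~$(iv)$; after the cancellations the expression collapses to the clean identity
\begin{align*}
\E\big[\tjtn-\tn\,\big|\,X_{\tn}=x\big]=\sigma^{-2}\Big(h^2+d_o(x)^2+2h\,d_o(x)\,\varrho(x)\Big)\qquad\text{Leb-a.e.}
\end{align*}
Since $X_{\tn}$ has density $p$ and $\sigma^{-2}h^2=T/n$ under Assumption~\ref{assumption}, integrating this identity gives
\begin{align*}
\E[\tjtn]-\tn-\tfrac43\tfrac Tn=\sigma^{-2}\Big(\E\big[d_o(X_{\tn})^2\big]-\tfrac{h^2}{3}\Big)+2h\sigma^{-2}\,\E\big[d_o(X_{\tn})\,\varrho(X_{\tn})\big].
\end{align*}
It then suffices to bound each of the two terms on the right by a constant multiple of $\nt^{-1/2}T/n$, with the constants summing to at most $67$. (All the quantities here are finite, since $q\in[0,1]$ and $d_o\in[0,h]$ force $|\varrho|\le1$, so the conditional expectation above is bounded by $4h^2/\sigma^2$.)

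For the $\varrho$‑term I would use $d_o\le h$, the pointwise symmetry of $\varrho$, and the $L^1$‑estimates of Proposition~\ref{PROPqprobestimate}~$(i)$--$(ii)$: adding $\int_0^h$ to $\int_h^\infty$ and doubling by symmetry yields $\int_{\R}|\varrho(y)|\,p(y)\,dy\le 16\,\tfrac{h}{\sigma\sqrt{\tn}}+6\,\tfrac{h^2}{\sigma^2\tn}$, so this term is at most $\tfrac{2h^2}{\sigma^2}\big(16\,\tfrac{h}{\sigma\sqrt{\tn}}+6\,\tfrac{h^2}{\sigma^2\tn}\big)$. Using $\tfrac{h^2}{\sigma^2}=\tfrac Tn$, $\tfrac{h}{\sigma\sqrt{\tn}}=\nt^{-1/2}$ and $\tfrac{h^2}{\sigma^2\tn}=\nt^{-1}\le 2^{-1/2}\nt^{-1/2}$ (valid since $\nt\ge2$), it is at most $\big(32+12/\sqrt2\big)\,\nt^{-1/2}\tfrac Tn\le 41\,\nt^{-1/2}\tfrac Tn$. (One could equally invoke Proposition~\ref{PROPqprobestimate}~$(iii)$ with $\beta=0$ and the bound~(\ref{SuperCoef}); the constant changes but stays well under $67$.)

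The only genuinely new estimate is the first term. The point is that $\phi:=d_o^2-h^2/3$ is continuous, $2h$‑periodic, and has zero mean over a period: on $[-h,h]$ one has $d_o(x)=h-|x|$, so $\tfrac1{2h}\int_{-h}^{h}(h-|x|)^2\,dx=\tfrac{h^2}{3}$. Hence its antiderivative $\Phi(x):=\int_0^x\phi$ is bounded and $2h$‑periodic, and an elementary computation bounds $\|\Phi\|_\infty$ by an explicit constant times $h^3$ (in fact $\|\Phi\|_\infty=\tfrac{2}{9\sqrt3}h^3\le\tfrac13 h^3$). I would then integrate by parts — the boundary terms vanish since $\Phi$ is bounded and $p$ decays at $\pm\infty$ — to get $\E[\phi(X_{\tn})]=-\int_{\R}\Phi(x)p'(x)\,dx$, whence $|\E[\phi(X_{\tn})]|\le\|\Phi\|_\infty\int_{\R}|p'(x)|\,dx=\|\Phi\|_\infty\sqrt{2/\pi}\,(\sigma\sqrt{\tn})^{-1}$. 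Therefore the first term is at most $\sigma^{-2}\|\Phi\|_\infty\sqrt{2/\pi}\,(\sigma\sqrt{\tn})^{-1}=\tfrac13\sqrt{2/\pi}\,\tfrac{h^2}{\sigma^2}\cdot\tfrac{h}{\sigma\sqrt{\tn}}=\tfrac13\sqrt{2/\pi}\,\nt^{-1/2}\tfrac Tn\le\nt^{-1/2}\tfrac Tn$. Combining, $\big|\E[\tjtn]-\tn-\tfrac43\tfrac Tn\big|\le 42\,\nt^{-1/2}\tfrac Tn\le\tfrac{67}{\sqrt{\nt}}\tfrac Tn$.

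Nothing in this is conceptually deep. The two spots that demand care are (a) the sign bookkeeping when collapsing the $(d_o,d_e,q,\varrho)$‑expression into the displayed identity (easy to mis‑sign, and it is essential that the $\varrho$‑free part is exactly $\sigma^{-2}(h^2+d_o^2)$ so the leading order is $\tfrac43\tfrac Tn$), and (b) the \emph{periodic mean‑zero function integrated against a slowly varying Gaussian density} estimate for $\phi=d_o^2-h^2/3$ — this is the one ingredient not already packaged in the cited results, and it is where, together with the constants of Proposition~\ref{PROPqprobestimate}, the numerical value in the bound is fixed. I would also be careful to apply $\tfrac{h}{\sigma\sqrt{\tn}}=\nt^{-1/2}\le2^{-1/2}$ and $\nt\ge2$ consistently whenever converting $h^2/\tn$‑type quantities into powers of $\nt^{-1}$.
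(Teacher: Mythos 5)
Your proof is correct, and the algebra checks out: substituting $q=d_o/h+\varrho$ and $d_e=h-d_o$ into Proposition \ref{simplify} $(iv)$ does collapse the right-hand side to $\sigma^{-2}\big(h^2+d_o(x)^2+2h\,d_o(x)\varrho(x)\big)$, the period-average $\tfrac1{2h}\int_{-h}^h d_o^2=\tfrac{h^2}3$ produces the constant $\tfrac43$, and your two error bounds ($\le 41\,\nt^{-1/2}T/n$ for the $\varrho$-term via Proposition \ref{PROPqprobestimate} $(i)$--$(ii)$, $\le \nt^{-1/2}T/n$ for the oscillatory term) sum to well under $67$. The skeleton is the same as the paper's — integrate Proposition \ref{simplify} $(iv)$ against $p$, split off the $\varrho$-contribution and control it with Proposition \ref{PROPqprobestimate} — but you diverge in two places. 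First, the paper keeps the two pieces $I_1=\sigma^{-2}\int p\,(h^2-d_o^2)(1-q)$ and $I_2=\sigma^{-2}\int p\,(2h^2-d_e^2)q$ separate and extracts the constants $\tfrac5{12}$ and $\tfrac{11}{12}$ from $\int_0^{2h}\mathcal D=\tfrac56h^3$ and $\int_0^{2h}\mathcal H=\tfrac{11}6h^3$; your upfront cancellation replaces both period-averages by the single, simpler one for $d_o^2$. Second, for the ``periodic mean-zero function against the Gaussian'' estimate the paper compares the integral to a Riemann sum at the odd lattice points (estimates (\ref{oRiemann}) and (\ref{new-fast-estimate}), exploiting monotonicity of $p$), whereas you integrate by parts against the bounded periodic antiderivative $\Phi$ and use $\int|p'|=\sqrt{2/\pi}\,(\sigma\sqrt{\tn})^{-1}$. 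Your route is shorter, avoids restating Lemma \ref{Firstandsecondintegral} entirely, and yields a sharper constant ($42$ versus $67$); the paper's route has the mild advantage that the intermediate quantities $I_1$, $I_2$ and the Riemann-sum lemma are reused elsewhere in its Section 6.
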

\begin{proof}
For each $(h,\theta) \in (0,\infty)\times(0,T]$, define
\begin{align}
I_1(h,\TT) & := \sigma^{-2}\int_{\R} p(x) \Big(h^2 - d^2_o(x) \Big)(1-q(x))dx,\label{busyweekI1}\\
I_2(h,\TT) & := \sigma^{-2}\int_{\R} p(x) \Big(2h^2 - d^2_e(x) \Big)q(x)dx \label{busyweekI2}.
\end{align}
Then, by Proposition \ref{simplify} $(iv)$, it holds for Leb-a.e.~$x \in \R$ that
\begin{align*}
\E[\tau_{J_n}] - \tn = \int_{\R} \E \pl{\tau_{J_n} - \tn|X_{\tn} = x} p(x) dx = I_1(h, \tn) + I_2(h, \tn).
\end{align*}
By Lemma \ref{Firstandsecondintegral} below and by the fact that $\frac{h^2}{\sigma^2} = \frac{T}{n}$ and $\frac{h}{\sigma \sqrt{\tn}} = \frac{1}{\sqrt{\nt}} \leq \frac{1}{\sqrt{2}}$, the left-hand side of (\ref{PropJestimaaatti}) is bounded from above by
\begin{align*}
\abs{I_1(h, \tn) - \frac{5}{12}\frac{h^2}{\sigma^2}} + \abs{I_2(h, \tn) - \frac{11}{12}\frac{h^2}{\sigma^2}}\leq \pr{33 + \frac{14}{\sqrt{2\pi}}}\frac{h^3}{\sigma^3\sqrt{\tn}} + \frac{12h^4}{\sigma^4 \tn} \leq \frac{48}{\sqrt{\nt}} \frac{T}{n}.
\end{align*}
\end{proof}
The estimate below will be used in the proof of Lemma \ref{Firstandsecondintegral}.
\begin{lemma}
Let $(h,\theta) \in (0,\infty){\times}(0,T]$ and denote by $p = p(\, \cdot \, ,\theta)$ the density of $X_{\theta} = \sigma W_{\theta}$. Then
\begin{align}
\Big|\!\!\sum_{m=-\infty}^{\infty} 2h p((2m{+}1)h) - 1\Big| \leq \frac{6}{\sqrt{2\pi}}\frac{h}{\sigma \sqrt{\TT}}. \label{oRiemann}
\end{align}
\end{lemma}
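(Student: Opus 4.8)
The plan is to recognise the sum $\sum_{m\in\Z} 2h\,p((2m{+}1)h)$ as a midpoint Riemann sum for $\int_{\R}p(x)\,dx = 1$: the points $(2m{+}1)h$, $m \in \Z$, are exactly the midpoints of the intervals $I_m := [2mh,(2m{+}2)h]$, which partition $\R$ up to a null set. The only structural facts needed about $p = p(\cdot,\TT)$ are that, as the density of the centred Gaussian $X_{\TT} = \sigma W_{\TT}$, it is even, attains its maximum $p(0) = (\sigma\sqrt{2\pi\TT})^{-1}$ at the origin, and is monotone on each of the half-lines $[0,\infty)$ and $(-\infty,0]$; since $0$ is itself one of the partition points $2mh$, the function $p$ is in fact monotone on every single interval $I_m$.

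First I would write
$$\sum_{m\in\Z} 2h\,p((2m{+}1)h) - 1 \;=\; \sum_{m\in\Z}\Big(2h\,p((2m{+}1)h) - \int_{I_m}p(x)\,dx\Big) \;=\; \sum_{m\in\Z}\int_{I_m}\big(p((2m{+}1)h)-p(x)\big)\,dx,$$
which is justified because $p\ge 0$ has Gaussian tails, so all series converge absolutely and the sum–integral interchange is immediate. On each $I_m$, monotonicity forces both $p(x)$ (for $x\in I_m$) and the midpoint value $p((2m{+}1)h)$ to lie between $p(2mh)$ and $p((2m{+}2)h)$, whence $\big|2h\,p((2m{+}1)h) - \int_{I_m}p\big| \le 2h\,\big|p(2mh)-p((2m{+}2)h)\big|$.

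Next I would sum this bound over $m$, exploiting telescoping separately on the two half-lines. For $m\ge 1$ the numbers $p(2mh)-p((2m{+}2)h)$ are nonnegative and sum to $p(2h)$; together with the $m=0$ term, bounded by $2h\,(p(0)-p(2h))$, the total over $m\ge 0$ is at most $2h\,(p(0)-p(2h)) + 2h\,p(2h) = 2h\,p(0)$. By the evenness of $p$ the sum over $m\le -1$ is bounded by $2h\,p(0)$ as well. Adding the two pieces gives
$$\Big|\sum_{m\in\Z} 2h\,p((2m{+}1)h) - 1\Big| \;\le\; 4h\,p(0) \;=\; \frac{4}{\sqrt{2\pi}}\,\frac{h}{\sigma\sqrt{\TT}} \;\le\; \frac{6}{\sqrt{2\pi}}\,\frac{h}{\sigma\sqrt{\TT}},$$
which is (\ref{oRiemann}).

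There is no genuine obstacle here; the computation is elementary bookkeeping for a midpoint Riemann sum of a unimodal density. The single point that requires a moment's care is the treatment of the two intervals $I_0 = [0,2h]$ and $I_{-1}=[-2h,0]$ straddling the mode, and this is handled cleanly precisely because the mode $0$ coincides with a grid point, so $p$ stays monotone on each $I_m$ and the crude per-interval estimate followed by telescoping suffices. (The constant obtained, $\tfrac{4}{\sqrt{2\pi}}$, is in fact better than the stated $\tfrac{6}{\sqrt{2\pi}}$, so there is even some slack.)
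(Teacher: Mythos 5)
Your proof is correct, and it reaches the conclusion by a slightly different decomposition than the paper, with a better constant. The paper isolates the central term $4hp(h)$ and sandwiches the tail sum $\sum_{m\geq 1} 2h\,p((2m{+}1)h)$ between $\int_{3h}^{\infty}p$ and $\int_{h}^{\infty}p$ using monotonicity, which leads to the two-sided bound $\int_{-h}^{h}p - 4hp(h) \leq 1 - \mathcal{S}_h \leq \int_{-3h}^{3h}p - 4hp(h)$ and then an application of the mean value theorem to the short integrals; this yields the constant $6/\sqrt{2\pi}$. You instead bound the midpoint-rule error interval by interval, $\bigl|2h\,p((2m{+}1)h) - \int_{I_m}p\bigr| \leq 2h\,\bigl|p(2mh)-p((2m{+}2)h)\bigr|$, and telescope the total variation of $p$ over each half-line; the key observation that makes this work — that the mode $0$ is a grid point $2mh$, so $p$ is monotone on every $I_m$ — is the same unimodality input the paper uses, but your telescoping bookkeeping is cleaner and gives $4hp(0) = \tfrac{4}{\sqrt{2\pi}}\tfrac{h}{\sigma\sqrt{\TT}}$, which is sharper than the stated bound and of course suffices. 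The interchange of sum and integral is harmless given the Gaussian tails, as you note. Either argument is perfectly adequate for the way the lemma is used (only the order $h/(\sigma\sqrt{\TT})$ matters downstream, in Lemma \ref{Firstandsecondintegral}).
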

\begin{proof}
By the symmetry of the Gaussian density $p$, it holds that
\begin{align}\label{Sriemann}
\mathcal{S}_{h} := \sum_{m=-\infty}^{\infty} 2h p((2m{+}1)h) & = 4hp(h) + 2 \sum_{m=1}^{\infty} 2h p((2m{+}1)h).
\end{align}
In addition, since $p$ is decreasing on $[0,\infty)$,
\begin{align*}
\int_{3h}^{\infty} p(x) dx & = \sum_{m=1}^{\infty} \int_{(2m{+}1)h}^{(2m+3)h} p(x) dx \leq \sum_{m=1}^{\infty} 2h p((2m{+}1)h) \leq  \int_{h}^{\infty} p(x) dx,
\end{align*}
which together with (\ref{Sriemann}) implies that
\begin{align}\label{ducktales2}
\int_{-h}^{h} p(x) dx - 4hp(h) \leq \int_{-\infty}^{\infty} p(x) dx - \mathcal{S}_{h} \leq \int_{-3h}^{3h} p(x) dx - 4hp(h).
\end{align}
For each $\beta > 0$, the mean value theorem implies that for a constant $\xi = \xi(h, \sigma, \TT, \beta) \in (-\beta h, \beta h)$ we have
\begin{align*}
\sqrt{2 \pi \sigma^2 \theta} \int_{-\beta h}^{\beta h} p(x) dx = \int_{-\beta h}^{\beta h} e^{-\frac{x^2}{2 \sigma^2 \TT}} dx = 2 \beta h e^{-\frac{\xi^2}{2 \sigma^2 \TT}},
\end{align*}
and (\ref{oRiemann}) then follows by (\ref{ducktales2}). 
\end{proof}
\begin{lemma}\label{Firstandsecondintegral}
Let $(h,\TT) \in (0,\infty){\times}(0,T]$. Then, for $I_1$ and $I_2$ defined in (\ref{busyweekI1})--(\ref{busyweekI2}), it holds that
\begin{align*}
(i) \quad & \Big|I_1(h, \TT) - \frac{5}{12}\frac{h^2}{\sigma^2}\Big| \leq \pr{11 + \frac{9}{2\sqrt{2\pi}}}\frac{h^3}{\sigma^3\sqrt{\TT}} + \frac{4h^4}{\sigma^4 \TT},\\
(ii) \quad & \Big|I_2(h, \TT) - \frac{11}{12}\frac{h^2}{\sigma^2}\Big| \leq \pr{22 + \frac{19}{2\sqrt{2\pi}}}\frac{h^3}{\sigma^3 \sqrt{\TT}} + \frac{8 h^4}{\sigma^4 \TT}.
\end{align*}
\end{lemma}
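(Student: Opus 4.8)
The plan is to estimate $I_1$ and $I_2$ by replacing the exact function $q$ with its linear surrogate $d_o/h$ (recall $\varrho = q - d_o/h$), so that
\begin{align*}
I_1(h,\TT) &= \sigma^{-2}\!\int_{\R}\! p(x)(h^2 - d_o^2(x))\Big(1 - \tfrac{d_o(x)}{h}\Big)dx \;-\; \sigma^{-2}\!\int_{\R}\! p(x)(h^2 - d_o^2(x))\varrho(x)\,dx,\\
I_2(h,\TT) &= \sigma^{-2}\!\int_{\R}\! p(x)(2h^2 - d_e^2(x))\tfrac{d_o(x)}{h}\,dx \;+\; \sigma^{-2}\!\int_{\R}\! p(x)(2h^2 - d_e^2(x))\varrho(x)\,dx,
\end{align*}
and to handle the two kinds of terms separately. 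The $\varrho$-terms are the error terms: since $0 \le h^2 - d_o^2 \le h^2$ and $0 \le 2h^2 - d_e^2 \le 2h^2$, they are bounded (in absolute value) by $\sigma^{-2}h^2\int_\R |\varrho(x)|p(x)\,dx$ and $2\sigma^{-2}h^2\int_\R|\varrho|p\,dx$ respectively, and by Proposition \ref{PROPqprobestimate} $(i)$--$(ii)$ (using symmetry of $\varrho$ to double the integral over $(0,\infty)$, and $\int_0^\infty = \int_0^h + \int_h^\infty$) one gets $\int_\R|\varrho|p\,dx \le 16\,\tfrac{h}{\sigma\sqrt\TT} + 6\,\tfrac{h^2}{\sigma^2\TT}$ or a similar explicit bound, contributing the $h^3/(\sigma^3\sqrt\TT)$ and $h^4/(\sigma^4\TT)$ terms in the claim.

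The main term is the one with the piecewise-linear weight. The key observation is that the functions $x \mapsto (h^2 - d_o^2(x))(1 - d_o(x)/h)$ and $x\mapsto (2h^2 - d_e^2(x))d_o(x)/h$ are periodic with period $2h$ and depend on $x$ only through $d_o(x) \in [0,h]$; explicitly, on an interval $[(2m-1)h,(2m+1)h]$ centered at the even lattice point $2mh$ one has $d_o(x) = h - |x - 2mh|$ and $d_e(x) = |x-2mh|$, so each weight is an explicit even function $\phi(x - 2mh)$ of the local coordinate. Writing $w := |x - 2mh| \in [0,h]$ one computes the \emph{average} of each weight against the normalized Lebesgue measure on a period: for $I_1$ the weight averages to $\tfrac{1}{h}\int_0^h (h^2 - (h-w)^2)\tfrac{h-w}{h}\,dw = \tfrac{5}{12}h^2$, and for $I_2$ the weight averages to $\tfrac{1}{h}\int_0^h (2h^2 - w^2)\tfrac{h-w}{h}\,dw = \tfrac{11}{12}h^2$ — these elementary polynomial integrals are exactly where the constants $5/12$ and $11/12$ come from. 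Thus the plan is to compare the weighted integral $\sigma^{-2}\int_\R \phi(x-2mh(x))p(x)\,dx$ (summed over $m$) against $\sigma^{-2}\cdot(\text{average})\cdot\int_\R p(x)\,dx = \sigma^{-2}\cdot\tfrac{5}{12}h^2$ (resp.\ $\tfrac{11}{12}h^2$).

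The error in this comparison is a Riemann-sum discrepancy: since $\phi$ integrates over each period to (average)$\times 2h$, the difference between $\int_\R \phi\cdot p$ and (average)$\cdot\int_\R p$ measures how much $p$ varies across a single period of length $2h$. I would bound this by writing, on each period, $\int \phi(x - 2mh)p(x)\,dx - (\text{avg})\!\int_{\text{period}} p(x)\,dx = \int \phi(x-2mh)\big(p(x) - p(2mh)\big)dx + (\text{avg})\big(2h\,p(2mh) - \int_{\text{period}}p\big)$ and using $|p(x) - p(2mh)| \le \sup|p'|\cdot 2h \le \tfrac{2h}{\sigma^2\TT}\sup(|x|p(x))\lesssim \tfrac{h}{\sigma^3\sqrt\TT}$ together with the Riemann-sum estimate of type (\ref{oRiemann}) — or, more cleanly, the estimate $|\sum 2h\,p((2m{+}1)h) - 1| \le \tfrac{6}{\sqrt{2\pi}}\tfrac{h}{\sigma\sqrt\TT}$ from the preceding lemma, after re-centering the partition at even rather than odd lattice points (the analogous bound holds by the same mean-value-theorem argument). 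Summing the $\lesssim h^3/(\sigma^3\sqrt\TT)$ per-period errors against $\sum_m \int_{\text{period}} p = 1$ yields the stated $O(h^3/(\sigma^3\sqrt\TT))$ bound for the main term, and combining with the $\varrho$-term bounds gives the result. The main obstacle is purely bookkeeping: tracking the explicit numerical constants through the Riemann-sum estimate, the bound on $\sup_{x}|x|p(x) = \tfrac{1}{\sigma\sqrt{2\pi e\TT}}$, and the $\varrho$-integrals so that everything assembles into the clean constants $16 + \tfrac{9}{2\sqrt{2\pi}}$, $\,6$, $\,32 + \tfrac{19}{2\sqrt{2\pi}}$, $\,12$; no conceptual difficulty is expected beyond careful arithmetic.
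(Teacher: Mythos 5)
Your plan is correct and follows essentially the same route as the paper: split each integral into a periodic main term handled by averaging the weight over one period (the elementary polynomial integrals producing $5/12$ and $11/12$), a Riemann-sum discrepancy controlled by the lattice-point density estimate of type (\ref{oRiemann}), and a $\varrho$-remainder bounded via Proposition \ref{PROPqprobestimate} $(i)$--$(ii)$ and the symmetry of $\varrho$ — exactly the paper's decomposition $I_1 = S_1 + S_2 + S_3$ (and likewise for $I_2$). One small slip: in your coordinate $w = |x-2mh|$ the factor $1 - d_o(x)/h$ equals $w/h$, not $(h-w)/h$, so the integrand in your displayed average for $I_1$ is mistyped, although the stated value $\tfrac{5}{12}h^2$ is the correct average of the actual weight.
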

\begin{proof}
$(i)$: Since $1 - q(x) = h^{-1}(h - d_o(x)) - \varrho(x)$ for $x \in \R$ by the definition of $\varrho$, and since 
$$\mathcal{D}(x) := \big(h^2 - d^2_o(x)\big)h^{-1} (h - \dox), \quad x \in \R,$$ 
is symmetric, periodic with period $2h$, and symmetric around $h$ on $[0,2h]$, we may decompose the integral $I_1 = I_1(h,\theta)$ into $I_1 = S_1 + S_2 + S_3$, where
\begin{align*}
S_1 & := \sigma^{-2} \sum_{m = -\infty}^{\infty} p((2m{+}1)h) \int_{0}^{2h} \mathcal{D}(x) dx,\\
S_2 & := \sigma^{-2} \sum_{m = -\infty}^{\infty} \int_{2mh}^{(2m{+}2)h} \big(p(x) - p((2m{+}1)h)\big) \mathcal{D}(x) dx,\\
S_3 & :=- \sigma^{-2} \sum_{m=-\infty}^{\infty}\int_{2mh}^{(2m{+}2)h} p(x) \Big(h^2 - d^2_o(x) \Big) \varrho(x) dx.
\end{align*}
A standard calculation yields $\int_{0}^{2h} \mathcal{D}(x) dx = 5h^3/6$, and thus
\begin{align}\label{hessuhopo}
S_{1} & = \frac{5}{12} \frac{h^2}{\sigma^2}\sum_{m =-\infty}^{\infty} \!\!2h p((2m{+}1)h) = \frac{5}{12}\frac{h^2}{\sigma^2} + \frac{5}{12}\frac{h^2}{\sigma^2}\Big(\!\!\sum_{m =-\infty}^{\infty} \!\!2h p((2m{+}1)h)-1\Big).
\end{align}
Hence, by applying the estimate (\ref{oRiemann}) to the right-hand side of (\ref{hessuhopo}), we obtain
\begin{align}\label{I1pt1}
\abs{S_{1}-\frac{5}{12}\frac{h^2}{\sigma^2}} \leq \frac{5}{2\sqrt{2\pi}} \frac{h^3}{\sigma^3 \sqrt{\TT}}.
\end{align}
In order to estimate $S_2$, notice that for each integer $k \geq 0$,
\begin{align*}
\int_{kh}^{(k{+}1)h} \mathcal{D}(x) dx = \int_{0}^{h}\mathcal{D}(x) dx = h^2 \int_{0}^{h} \Big(1- \frac{d^2_o(x)}{h^2}\Big)\Big(1 - \frac{\dox}{h}\Big) dx \leq h^3.
\end{align*}
By the symmetricity and the monotonicity properties of $p$, we thus obtain
\begin{align}\label{new-fast-estimate}
S_2 & \leq 2\sigma^{-2} \sum_{m = 0}^{\infty} \int_{2mh}^{(2m{+}1)h} \big(p(x) - p((2m{+}1)h)\big) \mathcal{D}(x) dx\nonumber\\
& \leq 2\sigma^{-2} \sum_{m = 0}^{\infty} \pr{p(2mh) - p((2m{+}1)h)} \int_{0}^h \mathcal{D}(x) dx\nonumber\\
& \leq \frac{2h^3}{\sigma^{2}} \sum_{m=0}^{\infty} \int_{2mh}^{(2m{+}1)h} \frac{x}{\sigma^2 \theta}p(x) dx\nonumber\\
& \leq \frac{2}{\sqrt{2\pi}} \frac{h^3}{\sigma^3 \sqrt{\theta}}.
\end{align}
A similar computation yields the lower bound $-\frac{2}{\sqrt{2\pi}} \frac{h^3}{\sigma^3 \sqrt{\theta}}$ for $S_2$, and consequently,
\begin{align}\label{I1pt2}
\abs{S_2} \leq \frac{2}{\sqrt{2\pi}} \frac{h^3}{\sigma^3 \sqrt{\theta}}.
\end{align}
It remains to estimate $S_3$. Using the inequality $h^2 - d^2_o(x) \leq h^2$ and estimates of Proposition \ref{PROPqprobestimate}, we have

\begin{align}\label{I1pt3}
\abs{S_3} & = \bigg|\frac{1}{\sigma^2}\sum_{m=-\infty}^{\infty} \int_{2mh}^{(2m{+}2)h} p(x) \Big(h^2 - d^2_o(x) \Big) \varrho(x) dx\bigg|
\leq \frac{h^2}{\sigma^2} \int_{\R} \abs{\varrho(x)} p(x) dx \nonumber\\
& \leq \frac{11 h^3}{\sigma^3 \sqrt{\TT}} + \frac{4 h^4}{\sigma^4 \TT}.
\end{align}
The claim then follows by applying (\ref{I1pt1}), (\ref{I1pt2}), and (\ref{I1pt3}) to the right-hand side of the estimate below,
\begin{align*}
\Big|I_1(h,\TT) - \frac{5}{12}\frac{h^2}{\sigma^2}\Big| \leq \Big|S_{1} - \frac{5}{12}\frac{h^2}{\sigma^2}\Big| + \abs{S_{2}} + \abs{S_3}.
\end{align*}

$(ii)$: The proof is similar to the proof of item $(i)$, and thus we omit most of the details. We write $q(x) = h^{-1}\dox + \varrho(x)$, let 
$$\HH(x) := \big(2h^2 - d^2_e(x) \big)h^{-1} \dox, \quad x \in \R,$$ 
and decompose $I_2 = S_4 + S_5 + S_6$, where
\begin{align*}
S_4 & := \sigma^{-2} \sum_{m=-\infty}^{\infty} p((2m{+}1)h) \int_{0}^{2h} \mathcal{H}(x) dx,\\
S_5 & := \sigma^{-2} \sum_{m=-\infty}^{\infty} \int_{2mh}^{(2m{+}2)h} (p(x) - p((2m{+}1)h) \mathcal{H}(x) dx,\\
S_6 & := \sigma^{-2} \sum_{m=-\infty}^{\infty} \int_{2mh}^{(2m{+}2)h} p(x)\Big(2h^2 - d^2_e(x) \Big)\varrho(x)dx.
\end{align*}
The fact that $\int_{0}^{2h} \HH(x) dx = \frac{11}{6}h^3$ and inequality (\ref{oRiemann}) yield the estimate
\begin{align}\label{esyksykstokaintegral}
\abs{S_{4}-\frac{11}{12}\frac{h^2}{\sigma^2}} \leq \frac{11}{2\sqrt{2\pi}}\frac{h^3}{\sigma^3 \sqrt{\TT}}.
\end{align}
By the properties of $\HH$, for each integer $k \geq 0$, it holds that
$$\int_{kh}^{(k{+}1)h} \HH(x) dx = \int_0^{h} \HH(x) dx = h^2 \int_{0}^{h} \pr{2 - \frac{h-d^2_o(x)}{h^2}}\frac{d_o(x)}{h} dx \leq 2h^3,$$
and by proceeding as in (\ref{new-fast-estimate}) with $\mathcal{D}$ replaced by $\HH$, it is easy to verify that
\begin{align}\label{tokaintegraltoka}
\abs{S_5} \leq \frac{4}{\sqrt{2\pi}} \frac{h^3}{\sigma^3 \sqrt{\theta}}.
\end{align}
Finally, since $2h^2 - d^2_e(x) \leq 2h^2$, by (\ref{I1pt3}) we obtain
\begin{align}\label{eskaksitokaintegral}
\abs{S_6} \leq 2|S_3| \leq \frac{22 h^3}{\sigma^3 \sqrt{\TT}} + \frac{8 h^4}{\sigma^4 \theta}.
\end{align}
The triangle inequality together with (\ref{esyksykstokaintegral}), (\ref{tokaintegraltoka}), and (\ref{eskaksitokaintegral}) then implies $(ii)$.
\end{proof}
\begin{proclaim}\label{jtntjn2}
Suppose that Assumption \ref{assumption} holds. Then there exists a constant $C > 0$ such that for all $(n,t_0) \in 2\N {\times} [0,T)$,
\begin{align*}
& (i) \quad \, \abs{\E[J_n] - \nt - \frac{4}{3}} \leq \frac{48}{\sqrt{\nt}}, \quad \quad (ii) \quad \bigg|\E\pr{\tfrac{J_n-\nt}{\sqrt{\nt}}}^2 - \frac{2}{3} \bigg| \leq \frac{C}{\sqrt{\nt}}.
\end{align*}
\end{proclaim}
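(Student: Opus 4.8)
The plan is to combine the two exact Wald identities for the i.i.d.\ renewal sequence $(\Delta\tau_k)_{k\ge 1}$ with the estimate of Lemma~\ref{PropJ}, crucially exploiting that $\tn=\nt\tfrac{T}{n}$ holds \emph{exactly} (see (\ref{deftnnt})). Recall from (\ref{tauhoomoments}) that $\E[\tau_1]=(h/\sigma)^2=\tfrac{T}{n}$ and $\E[\tau_1^2]=\tfrac{5}{3}(h/\sigma)^4$, so $\Var(\tau_1)=\tfrac{2}{3}(\tfrac{T}{n})^2$; recall also that $J_n$ is a stopping time for the filtration generated by $(\Delta\tau_k)$ (it takes only even values and $\{J_n\le 2m\}=\{\sum_{i\le 2m}\Delta\tau_i>\tn\}$) and that $\E[J_n]<\infty$ (since $\tau_1>0$ $\p$-a.s., elementary renewal theory gives $\E[J_n]\le \E[\,\#\{m:\tau_m<\tn\}\,]+2<\infty$). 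For part~(i) I would simply apply Wald's first identity, $\E[\tjtn]=\E[\tau_1]\E[J_n]=\tfrac{T}{n}\E[J_n]$, solve for $\E[J_n]=\tfrac{n}{T}\E[\tjtn]$, multiply the bound of Lemma~\ref{PropJ} by $n/T$, and use $\tfrac{n}{T}\tn=\nt$; the constant $67$ is inherited verbatim from Lemma~\ref{PropJ}.

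For part~(ii) the starting point is Wald's second identity, $\E\big[(\tjtn-J_n\tfrac{T}{n})^2\big]=\Var(\tau_1)\E[J_n]=\tfrac{2}{3}(\tfrac{T}{n})^2\E[J_n]$, valid because $\E[J_n]<\infty$. Using $\tn=\nt\tfrac{T}{n}$ I would write $\tjtn-J_n\tfrac{T}{n}=(\tjtn-\tn)-(J_n-\nt)\tfrac{T}{n}$, expand the square, rearrange, and divide by $(\tfrac{T}{n})^2\nt$ to get, with $x_n:=\E[(J_n-\nt)^2]/\nt$,
\[
x_n=\tfrac{2}{3}\,\frac{\E[J_n]}{\nt}-\frac{\E[(\tjtn-\tn)^2]}{(T/n)^2\,\nt}+\frac{2\,\E[(\tjtn-\tn)(J_n-\nt)]}{(T/n)\,\nt}.
\]
Part~(i) gives $\E[J_n]/\nt=1+O(1/\nt)$, so the first term is $\tfrac{2}{3}+O(1/\nt)$. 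The second term is $O(1/\nt)$ once we have the uniform overshoot estimate $\E[(\tjtn-\tn)^2]\le C(T/n)^2$, and the third term is then controlled by Cauchy--Schwarz: $|\E[(\tjtn-\tn)(J_n-\nt)]|\le\sqrt{C}\,\tfrac{T}{n}\sqrt{\E[(J_n-\nt)^2]}$, so its modulus is at most $\tfrac{2\sqrt{C}}{\sqrt{\nt}}\sqrt{x_n}$. Hence $x_n\le a_n+b_n\sqrt{x_n}$ with $a_n=\tfrac{2}{3}+O(1/\nt)$ bounded and $b_n=O(1/\sqrt{\nt})$; reading this as a quadratic inequality in $\sqrt{x_n}$ shows $x_n$ is bounded uniformly in $(n,t_0)$, and plugging that bound back shows the cross term is $O(1/\sqrt{\nt})$, whence $|x_n-\tfrac{2}{3}|\le C/\sqrt{\nt}$. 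This self-improving step is exactly what lets me avoid invoking an a priori $L^2$-bound on $J_n-\nt$ (e.g.\ via Proposition~\ref{PropAzumaMoments}).

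To establish the overshoot estimate I would argue as in the proof of Proposition~\ref{simplify}(i)--(ii): by the strong Markov property at $\tn$, conditionally on $\F_{\tn}$ the nonnegative time $\tjtn-\tn$ is, on $\{L_n\text{ odd}\}$, a first exit time of $\sigma W$ (restarted at $\tn$) from an interval of length $2h$ containing $X_{\tn}$, hence stochastically dominated by $\tau_{(-h,h)}$; and on $\{L_n\text{ even}\}$ it is a sum of two such exit times, hence has conditional second moment at most $4\,\E[\tau_{(-h,h)}^2]=\tfrac{20}{3}(h/\sigma)^4$ by (\ref{tauhoomoments}). This yields $\E[(\tjtn-\tn)^2]\le \tfrac{20}{3}(T/n)^2$ uniformly. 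I expect this conditional second-moment computation to be the main obstacle — not because it is deep, but because one must handle both parity events $\{L_n\text{ odd}\}$ and $\{L_n\text{ even}\}$ and keep the constant uniform in $(n,t_0)$; everything else is a direct citation of Lemma~\ref{PropJ} and Proposition~\ref{simplify} together with the Wald/Cauchy--Schwarz bookkeeping above.
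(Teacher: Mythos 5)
Your part (i) is the paper's argument verbatim (Wald's first identity, i.e.\ optional stopping for $M_k=\tau_k-k\E[\tau_1]$, combined with Lemma~\ref{PropJ} and $\tfrac{n}{T}\tn=\nt$), so nothing to add there. For part (ii) you follow the same backbone as the paper — Wald's second identity plus a uniform overshoot bound $\E(\tjtn-\tn)^2\le C(T/n)^2$ (the paper's (\ref{taujiin-teen})) — but you handle the cross term $\E[(J_n-\nt)(\tjtn-\tn)]$ differently, and this is a genuine improvement in self-containedness. The paper isolates $\alpha_{\nt}=2\E[J_n(\tjtn-\tn)]/(\E[\tau_1]\E[J_n])$, bounds it in $[0,4]$ via Proposition~\ref{simplify}~$(iii)$, and then, to show $|(\alpha_{\nt}-2)-\tfrac23|\le C/\sqrt{\nt}$, applies Cauchy--Schwarz using the \emph{a priori} moment bound $\E(J_n-\nt)^2\le C\nt$ imported from Proposition~\ref{PropAzumaMoments} (the Azuma--Hoeffding machinery of Section~6.3). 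Your bootstrap $x_n\le a_n+b_n\sqrt{x_n}$ with $b_n=O(\nt^{-1/2})$ extracts that same $L^2$-bound from the Wald identity itself and then feeds it back in, so the proof of this proposition no longer depends on Proposition~\ref{simplify}~$(iii)$ or on Section~6.3 at all (those results are still needed elsewhere in the paper, e.g.\ for (\ref{FFF}), so nothing is saved globally). The algebraic identity for $x_n$ is correct, the first term is $\tfrac23+O(1/\nt)$ by part (i), and the quadratic-inequality step is sound.

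Two points need tightening. First, in your overshoot estimate the residual exit time on $\{L_n\text{ odd}\}$ is the exit time from an interval of length $2h$ that contains $X_{\tn}$ but is in general \emph{not centered} at $X_{\tn}$; stochastic domination by $\tau_{(-h,h)}$ is therefore not a triviality (it amounts to the fact that, among intervals of fixed length containing the starting point, the symmetric one maximizes the survival probability — true, but it needs a proof). The painless fix is to enlarge to the concentric interval $(X_{\tn}-2h,\,X_{\tn}+2h)$, which contains the exit interval by inclusion and gives domination by $\tau_{(-2h,2h)}\stackrel{d}{=}4\tau_{(-h,h)}$; this only worsens the constant in $\E(\tjtn-\tn)^2\le C(T/n)^2$, which is immaterial since the constant in (ii) is unspecified. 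The same remark applies to the first summand in the even case (the second summand is an exact copy of $\tau_{(-h,h)}$). Second, reading $x_n\le a_n+b_n\sqrt{x_n}$ as a quadratic inequality presupposes $x_n<\infty$; make this explicit, e.g.\ from $(J_n-\nt)\tfrac{T}{n}=(\tjtn-\tn)-M_{J_n}$ together with $\E[M_{J_n}^2]=\Var(\tau_1)\E[J_n]<\infty$ and the overshoot bound, or by running the whole argument with $J_n\wedge N$ and letting $N\to\infty$. With these two repairs the proposal is complete.
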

\begin{proof}
$(i)$: Define a process $(M_k)_{k=0,1,\dots}$ by setting $M_k := {\tau_{k} - k \E \pl{\tau_1}}$ for $k \geq 0$. Since $\tau_k = \sum_{j=1}^{k} \Delta \tau_j$ is a sum of $k$ i.i.d.~random variables $\Delta \tau_j$ distributed as $\tau_1$, the process $(M_k)_{k=0,1, \dots}$ is a {$(\F_{\tau_k})_{k=0,1, \dots}$-martingale}. In addition, since $J_n$ is a $(\F_{\tau_k})_{k=0,1, \dots}$-stopping time and since $J_n{\wedge}N$ is a bounded stopping time for all $N \in \N$, 
the optional stopping theorem implies that
\begin{align*}
0 = \E \pl{M_N} = \E \pl{ \E\pl{M_N|\F_{\tau_{J_n \wedge N}}}} = \E[M_{J_n \wedge N}] = \E[\tau_{J_n \wedge N}] - \E[J_n {\wedge} N]\E[\tau_1],
\end{align*}
i.e. $\E[J_n {\wedge} N] = \E[\tau_{J_n {\wedge} N}]/\E[\tau_1]$.
Moreover, since $N \mapsto \tau_{J_n \wedge N}$ is increasing, where $\tau_{J_n}$ is an integrable upper bound (by Lemma \ref{PropJ}), the monotone convergence theorem implies
that 
\begin{align}
\E[J_n] = \lim_{N \to \infty} \E[J_n {\wedge} N] = \lim_{N \to \infty} \frac{\E[\tau_{J_n {\wedge} N}]}{\E[\tau_1]} = \frac{\E[\tau_{J_n}]}{\E[\tau_1]} < \infty. \label{jntaujtnexp}
\end{align}
From $\E[\tau_1] = \frac{h^2}{\sigma^2} = \frac{T}{n}$ (see (\ref{tauhoomoments})) we conclude that 
$$\E[\tau_1]\pr{\E[J_n - \nt] - \tfrac{4}{3}} = \E[\tau_{J_n} - \tn] - \tfrac{4}{3}\tfrac{T}{n},$$ 
and the claim then follows by Lemma \ref{PropJ}.

$(ii)$: Let $\zeta := \tn + \inf \set{t \geq 0: \abs{X_{t + \tn} - X_{\tn}} = 2h}$. Then by the Markov property and the scaling property of $(X_t)_{t \geq 0}$,
\begin{align*}
\zeta - \tn \stackrel{d}{=} \inf \set{t \geq 0: \abs{X_t} = 2h} \stackrel{d}{=} \inf \set{t \geq 0 : \abs{X_{t/4}} = h} \stackrel{d}{=} 4 \tau_1.
\end{align*}
Since $\p(\tau_{J_n} \leq \zeta)= 1$ and $\E[\tau_1^2] = \frac{5}{3}\frac{h^4}{\sigma^4} = \frac{5}{3}\frac{T^2}{n^2}$ by (\ref{tauhoomoments}), it also holds that
\begin{align}\label{taujiin-teen}
0 \leq \E(\tau_{J_n} - \tn)^2 \leq \E(\zeta- \tn)^2 = 16 \E[\tau_1^2] = \frac{80}{3} \frac{T^2}{n^2}.
\end{align}
By the definition of the process $(M_k)_{k=0,1, \dots},$
\begin{align*}
\E[M^2_{J_n}] = \E(\tau_{J_n} - J_n \E[\tau_1])^2 = \E[\tau_{J_n}^2] - 2 \E[J_n \tau_{J_n}] \E[\tau_1] + \E[J_n^2] (\E[\tau_1])^2.
\end{align*}
Moreover, since $\E[J_n] < \infty$, Wald's second identity applies and thus $\E[M^2_{J_n}] = \E[J_n] \var[\tau_1]$. As a consequence,
since $\Var[\tau_1] = \frac{2}{3}(\E[\tau_1])^2$ and $\E[\tau_{J_n}] = \E[\tau_1]\E[J_n]$ by (\ref{jntaujtnexp}),
\begin{align}\label{jiikaksi}
\E[J_n^2] & = \frac{\E[J_n]\mathrm{Var}[\tau_1]}{(\E[\tau_1])^2} + \frac{2 \E[J_n \tau_{J_n}]}{\E[\tau_1]} - \frac{\E[\tau_{J_n}^2]}{(\E[\tau_1])^2} \nonumber\\
& = \tfrac{2}{3} \E[J_n] + \frac{2\E[J_n(\tau_{J_n}{-}\tn)]}{\E[\tau_1]} - \frac{\E(\tau_{J_n} {-} \tn)^2}{(\E[\tau_1])^2} \nonumber\\
& \quad \quad \quad \quad  + \frac{2\tn (\E[J_n]{-}\E[\tau_{J_n}]/\E[\tau_1])}{\E[\tau_1]} + \frac{\tn^2}{(\E[\tau_1])^2}\nonumber\\
& = \tfrac{2}{3} \E[J_n] + \frac{2\E[J_n(\tau_{J_n} {-} \tn)]}{\E[\tau_1]} - \frac{\E(\tau_{J_n} {-} \tn)^2}{(\E[\tau_1])^2} + \nt^2.
\end{align}
Denote
\begin{align}\label{triplet}
\alpha_{\nt} := \frac{2\E[J_n(\tau_{J_n} - \tn)]}{\E[\tau_1]\E[J_n]}, \, \beta_{\nt} & := \frac{\E(\tau_{J_n} - \tn)^2}{(\E[\tau_1])^2} \, \text{ and } \, \gamma_{\nt} := \E[J_n] - \nt - \tfrac{4}{3},
\end{align}
and observe that $\alpha_{\nt} \in [0, 4], \beta_{\nt} \in [0, 80/3]$, and $\abs{\gamma_{\nt}} \in [0, \tfrac{48}{\sqrt{\nt}}]$
by Proposition \ref{simplify} $(iii)$, (\ref{taujiin-teen}), and item $(i)$, respectively. In addition, by (\ref{jiikaksi}),
\begin{align*}
\E(J_n - \nt)^2 & = \E[J_n^2] - 2\nt \E[J_n] + \nt^2 \nonumber\\
& = \E[J_n](\tfrac{2}{3} - 2\nt + \alpha_{\nt}) - \beta_{\nt} + 2\nt^2 \nonumber\\
& = (\nt + \tfrac{4}{3})(\tfrac{2}{3} - 2\nt + \alpha_{\nt}) + \gamma_{\nt}(\tfrac{2}{3} - 2\nt + \alpha_{\nt}) - \beta_{\nt} + 2\nt^2\nonumber\\
& = \nt(\alpha_{\nt} - 2) + \tfrac{4}{3}(\tfrac{2}{3} + \alpha_{\nt}) + \gamma_{\nt}(\tfrac{2}{3} - 2\nt + \alpha_{\nt}) - \beta_{\nt}.
\end{align*}
In particular, using the above upper bounds for $\alpha_{\nt}, \beta_{\nt}$, and $\gamma_{\nt}$, and the fact that $\nt \geq 2$,
\begin{align}\label{alphaminus1}
\Big|\E\pr{\tfrac{J_n{-}\nt}{\sqrt{\nt}}}^2{-}(\alpha_{\nt}{-}2)\Big| & \leq \frac{1}{\sqrt{\nt}} \pr{\frac{56}{9}\frac{1}{\sqrt{\nt}} + \frac{48}{\nt}\pr{\frac{14}{3}{+}2\nt} + \frac{80}{3\sqrt{\nt}}} \leq \frac{C_1}{\sqrt{\nt}}
\end{align}
for some constant $C_1 > 0$.

Notice that $\tau_{J_n} \geq \tn$ $\p$-almost surely by the definition of $J_n$. Therefore, by (\ref{jntaujtnexp}), it holds that
$\E[\tau_1]\E[J_n] = \E[\tau_{J_n}] \geq \tn$, which yields
\begin{align*}
& \abs{(\alpha_{\nt} - 2) - \frac{2}{3}} = 2 \abs{\frac{\E[J_n(\tau_{J_n} - \tn)]}{\E[\tau_1]\E[J_n]} - \frac{4}{3}} \nonumber
\end{align*}
\begin{align}\label{alphanminus2}
& = \frac{2}{\E[\tau_1]\E[J_n]}\pr{\abs{\nt\E[\tau_{J_n} - \tn] + \E[(J_n - \nt)(\tau_{J_n}-\tn)] - \frac{4}{3}\E[\tau_1]\E[J_n]}} \nonumber\\
& \leq \frac{2}{\tn}\pr{\abs{\nt\E[\tau_{J_n} - \tn] - \frac{4}{3}\E[\tau_{J_n}]} + \E\big|(J_n - \nt)(\tau_{J_n}-\tn)\big|}.
\end{align}
By the relation $\theta_n = \frac{\nt T}{n}$ and Lemma \ref{PropJ},
\begin{align}\label{w_est_1}
\abs{\nt\E[\tau_{J_n} - \tn] - \frac{4}{3}\E[\tau_{J_n}]} & = \abs{\nt \pr{\E\pl{\tau_{J_n}} - \tn - \frac{4}{3}\frac{T}{n}} + \frac{4}{3}\pr{\tn - \E[\tau_{J_n}]}} \nonumber\\
& \leq \nt \abs{\E\pl{\tau_{J_n}} - \tn - \frac{4}{3}\frac{T}{n}} + \frac{4}{3}\abs{\tn - \E[\tau_{J_n}]}\nonumber\\
& \leq \nt \pr{\frac{48}{\sqrt{\nt}}\frac{T}{n}} + \frac{4}{3}\pr{\frac{4}{3}\frac{T}{n} + \frac{48}{\sqrt{\nt}}\frac{T}{n}}\nonumber\\
& = \frac{\tn}{\sqrt{\nt}}\pr{48 + \frac{16}{9}\frac{1}{\sqrt{\nt}} + \frac{64}{\nt}} \leq \frac{C_2 \tn}{\sqrt{\nt}}
\end{align}
for a constant $C_2 > 0$. Moreover, by H\"older's inequality, (\ref{CCC}), and (\ref{taujiin-teen}),
\begin{align}\label{w_est_2}
\E\big|(J_n {-} \nt)(\tau_{J_n}{-}\tn)\big| \leq \pr{\E \pr{J_n - \nt}^2 \E\pr{\tau_{J_n} - \tn}^2}^{1/2} \leq C_3\frac{\sqrt{\nt}T}{n} = \frac{C_3 \tn}{\sqrt{\nt}}
\end{align}
for some constant $C_3> 0$.
Consequently, by (\ref{alphaminus1}), (\ref{alphanminus2}), (\ref{w_est_1}), and (\ref{w_est_2}), it holds that
\begin{align*}
\abs{\E \pr{\tfrac{J_n - \nt}{\sqrt{\nt}}}^2 - \frac{2}{3}} = \abs{\E \pr{\tfrac{J_n - \nt}{\sqrt{\nt}}}^2 - (\alpha_{\nt} - 2)} + \abs{(\alpha_{\nt} - 2) - \frac{2}{3}} \leq \frac{C}{\sqrt{\nt}},
\end{align*}
where $C = C_1 + 2(C_2 + C_3) >0$.
\end{proof}
\begin{remark}
In the proof of \cite[Proposition 11.2]{Walsh}, an expression for $\alpha_{\nt}$ in (\ref{triplet}) is given based on the relation
\begin{align}\label{Walshfactorization}
\E[J_n \tau_{J_n}] = \E[J_n] \E[\tau_{J_n}].
\end{align}
However, we were not able to verify (\ref{Walshfactorization}), and thus had to use an estimate for $\alpha_{\nt}$ instead.
\end{remark}

\subsection{Tail behavior of \texorpdfstring{$\tau_{\nt}$ and $J_n - \nt$}{\tau_{\nt} and J_n}}
Lemmata \ref{Lemmatechnical} and \ref{J} below are essential for the proof of Proposition \ref{PropAzumaMoments}. 
\begin{lemma}\label{Lemmatechnical}
Under Assumption \ref{assumption}, suppose that $\nt \in 2\N$ and a constant $\xi > 0$ are such that $\nt \xi \in \N$. Then for every $\rho \in (0,\tfrac{\pi^2}{12} \xi \tn \sqrt{\nt})$ it holds that
\begin{align}
(i) & \quad \p \pr{\sqrt{\nt}(\tau_{\nt\xi} - \xi \tn) > \rho} \leq \exp \pr{-\tfrac{3}{2}\tfrac{\rho^2}{\xi\tn^2} H\pr{\sqrt{\tfrac{3 {\rho}}{\xi \tn \sqrt{\nt}}}}},\label{technicalbound1}\\
(ii) & \quad \p \pr{\sqrt{\nt}(\tau_{\nt\xi} - \xi \tn) < - \rho} \leq \exp \pr{-\tfrac{3}{2}\tfrac{\rho^2}{\xi \tn^2} H\pr{\sqrt{\tfrac{3 {\rho}}{\xi \tn \sqrt{\nt}}}}},\label{technicalbound2}
\end{align}
where the function $H : (0, \pi/2) \to \R$ is given by  
\begin{align}
\quad \quad H(x) := 1 + \tfrac{6}{x^4} \big(\tfrac{x^2}{2} + \log \cos x\big)\label{H}.
\end{align}
\end{lemma}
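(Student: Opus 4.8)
The plan is to run a Cram\'er--Chernoff (exponential Markov) argument on the i.i.d.\ sum $\tau_{\nt\xi}=\sum_{j=1}^{\nt\xi}\Delta\tau_j$, exploiting the closed-form moment generating function (\ref{laplacecoshcos}) of $\tau_{(-h,h)}$. First I would record that under Assumption \ref{assumption} one has $h^2/\sigma^2=T/n=\tn/\nt$, so by (\ref{tauhoomoments}) (with $C_1=1$) the sum is already centered, $\E[\tau_{\nt\xi}]=\nt\xi\cdot(\tn/\nt)=\xi\tn$, and $\tau_{\nt\xi}$ is a genuine stopping time since $\nt\xi\in\N$ by hypothesis. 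All that remains is to choose the exponential parameter well and read off the constant $H$.

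For $(i)$, fix $\lambda\in\big(0,\tfrac{\pi^2}{8}\sigma^2/h^2\big)$. Markov's inequality, independence of the increments, and (\ref{laplacecoshcos}) give
\[
\pp{\sqrt{\nt}\,(\tau_{\nt\xi}-\xi\tn)>\rho}\;\leq\;\exp\!\Big({-}\lambda\xi\tn-\tfrac{\lambda\rho}{\sqrt{\nt}}-\nt\xi\log\cos\tfrac{h\sqrt{2\lambda}}{\sigma}\Big).
\]
Substituting $x:=h\sqrt{2\lambda}/\sigma$ (equivalently $\lambda=\nt x^2/(2\tn)$, with $x\in(0,\pi/2)$) and using $h^2/\sigma^2=\tn/\nt$, the exponent becomes $-\nt\xi\big(\tfrac{x^2}{2}+\log\cos x\big)-\tfrac{\sqrt{\nt}\,\rho x^2}{2\tn}$, and the definition (\ref{H}) is precisely the identity $\tfrac{x^2}{2}+\log\cos x=\tfrac{x^4}{6}\big(H(x)-1\big)$. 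The decisive step is then to take $x_{\ast}:=\sqrt{3\rho/(\xi\tn\sqrt{\nt})}$, which lies in $(0,\pi/2)$ exactly because of the hypothesis $\rho<\tfrac{\pi^2}{12}\xi\tn\sqrt{\nt}$; with this choice $\tfrac{\sqrt{\nt}\,\rho x_{\ast}^2}{2\tn}=\tfrac{3\rho^2}{2\xi\tn^2}=\nt\xi\,\tfrac{x_{\ast}^4}{6}$, so the exponent collapses to $-\tfrac{3\rho^2}{2\xi\tn^2}\big(H(x_{\ast})-1\big)-\tfrac{3\rho^2}{2\xi\tn^2}=-\tfrac{3\rho^2}{2\xi\tn^2}H(x_{\ast})$, which is (\ref{technicalbound1}).

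For $(ii)$ I would mirror the computation: for any $\mu>0$, Markov applied to $e^{-\mu\tau_{\nt\xi}}$ together with the $\lambda\leq0$ branch of (\ref{laplacecoshcos}) yields, writing $y:=h\sqrt{2\mu}/\sigma$,
\[
\pp{\sqrt{\nt}\,(\tau_{\nt\xi}-\xi\tn)<-\rho}\;\leq\;\exp\!\Big(\nt\xi\big(\tfrac{y^2}{2}-\log\cosh y\big)-\tfrac{\sqrt{\nt}\,\rho y^2}{2\tn}\Big),
\]
and taking again $y=x_{\ast}=\sqrt{3\rho/(\xi\tn\sqrt{\nt})}\in(0,\pi/2)$ reduces the exponent to $\nt\xi\big(\tfrac{x_{\ast}^2}{2}-\log\cosh x_{\ast}\big)-\tfrac{3\rho^2}{2\xi\tn^2}$. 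To land on the same bound as in $(i)$, I would use the elementary inequality $\tfrac{y^2}{2}-\log\cosh y\leq-\big(\tfrac{y^2}{2}+\log\cos y\big)$ on $(0,\pi/2)$ --- equivalently $\log\cosh y-\log\cos y\geq y^2$, which follows by integrating $\tanh y+\tan y\geq 2y$ (itself a consequence of $\tan^2 y\geq\tanh^2 y$ on $[0,\pi/2)$, since $\tan y\geq y\geq\tanh y\geq0$, together with equality at $y=0$). Combining with the $H$-identity gives $\nt\xi\big(\tfrac{x_{\ast}^2}{2}-\log\cosh x_{\ast}\big)\leq-\tfrac{3\rho^2}{2\xi\tn^2}\big(H(x_{\ast})-1\big)$, hence (\ref{technicalbound2}) with the same right-hand side. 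Everything else is bookkeeping; the only genuinely non-mechanical point is this $\cosh$-versus-$\cos$ comparison, needed so that the lower tail inherits exactly the upper-tail exponent, together with keeping the substitutions $\lambda\leftrightarrow x$, $\mu\leftrightarrow y$ and the scaling relation $h^2/\sigma^2=\tn/\nt$ straight.
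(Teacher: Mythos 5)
Your proposal is correct and follows essentially the same route as the paper: a Chernoff bound using the explicit moment generating function (\ref{laplacecoshcos}) of the i.i.d.\ increments, the same optimal choice of exponential parameter (your $\lambda=\nt x_*^2/(2\tn)$ is the paper's $\tilde\lambda\sqrt{\nt}$), and for the lower tail the same comparison $\log\cosh y-\log\cos y\geq y^2$ on $(0,\pi/2)$, which the paper phrases as $\hat H\geq H$ and proves by the identical convexity argument ($\varphi''=\tan^2-\tanh^2\geq 0$, $\varphi'(0+)=\varphi(0+)=0$). No gaps.
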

\begin{remark}
The above estimates are non-trivial only whenever $H$ is positive. Since $H(0+) = 1/2$, it holds that $H(x) > 0$ for small enough $x$.
Notice that the condition $\rho \in (0, \tfrac{\pi^2}{12} \xi \tn \sqrt{\nt})$ ensures that $\sqrt{\tfrac{3 {\rho}}{\xi \tn \sqrt{\nt}}} \in (0, \pi/2),$
which is the domain of $H$.
\end{remark}
\begin{proof}[Proof of Lemma \ref{Lemmatechnical}]
The proof uses ideas from the proof of \cite[Proposition 11.3]{Walsh}.

$(i)$: By Chebyshev's inequality, for any $\tilde{\lambda},\tilde{\rho} > 0$ it holds
\begin{align}\label{ekaestimaatti}
P_n^{(+)}(\tilde{\rho}) & := \p \pr{\sqrt{\nt}(\tau_{\nt\xi} - \xi \tn) > \tilde{\rho}} = \p \Big(e^{\tilde{\lambda} \sqrt{\nt} (\tau_{\nt{\xi}} - {\xi} \tn)} > e^{\tilde{\lambda} \tilde{\rho}}\Big) \nonumber\\
& \leq e^{-\tilde{\lambda} \tilde{\rho}} \E \pl{e^{\tilde{\lambda} \sqrt{\nt} ( \tau_{\nt {\xi}} - {\xi} \tn)}} = e^{-\tilde{\lambda} \tilde{\rho}} e^{-\tilde{\lambda} \sqrt{\nt} {\xi} \tn} \E[e^{\tilde{\lambda} \sqrt{\nt} \tau_1}]^{\nt {\xi}}
\end{align}
since $\tau_{\nt {\xi}}$ can be written as a sum of $\nt \xi$ independent random variables identically distributed as $\tau_1$ (see Subsection \ref{firstexittimes}).
In addition, since $\frac{h}{\sigma} = \sqrt{\smash[b]{\frac{T}{n}}}$ and $\theta_n = \frac{\nt T}{n}$, by (\ref{laplacecoshcos})
\begin{align}\label{levels}
e^{-\tilde{\lambda} \tilde{\rho}} e^{-\tilde{\lambda} \sqrt{\nt} {\xi} \tn} \E[e^{\tilde{\lambda} \sqrt{\nt} \tau_1}]^{\nt {\xi}} = e^{-\tilde{\lambda} \tilde{\rho}} \pr{e^{\tfrac{\tilde{\lambda} \tn}{\sqrt{\nt}}}\cos \pr{\sqrt{\tfrac{2 \tilde{\lambda}\tn}{\sqrt{\nt}}}}}^{-\nt {\xi}}
\end{align}
provided that $\tilde{\lambda} \in(0, \frac{\pi^2}{8T}\frac{n}{\sqrt{\nt}})$. Let $\rho \in (0, \tfrac{\pi^2}{12}\xi\tn \sqrt{\nt})$ and define
\begin{align}\label{lambda}
\lambda & := \tfrac{3\rho}{2 \xi\tn^2} \quad \text{ and } \quad \kappa := \sqrt{\tfrac{3 {\rho}}{\xi \tn \sqrt{\nt}}};
\end{align}
then $\lambda \in(0, \frac{\pi^2}{8T}\frac{n}{\sqrt{\nt}})$. Substitute $(\rho, \lambda) =  (\tilde{\rho},\tilde{\lambda})$ and use the relation 
$\kappa = \sqrt{ \frac{2 \lambda \tn}{\sqrt{\nt}}}$ in order to rewrite the right-hand side of (\ref{levels}) in terms of $(\rho, \lambda, \kappa, \xi)$:
\begin{align}\label{provinssitorstai}
e^{-\tilde{\lambda} \tilde{\rho}} \pr{e^{\tfrac{\tilde{\lambda} \tn}{\sqrt{\nt}}}\cos \pr{\sqrt{\tfrac{2 \tilde{\lambda} \tn}{\sqrt{\nt}}}}}^{-\nt {\xi}} = e^{-\lambda \rho} \pr{e^{\kappa^2/2} \cos \kappa}^{-\frac{4 \lambda^2 \tn^2 \xi}{\kappa^4}}.
\end{align}
Hence, by (\ref{ekaestimaatti}), (\ref{levels}), (\ref{provinssitorstai}), and finally by (\ref{lambda}),
\begin{align*}
\log P_n^{(+)}(\rho) & \leq -\lambda \rho - \tfrac{4 \lambda^2 \tn^2 \xi}{\kappa^4} \pr{\tfrac{\kappa^2}{2} + \log \cos \kappa} = -\tfrac{3}{2} \tfrac{\rho^2}{\xi \tn^2}\pr{1 + \tfrac{6}{\kappa^4} \pr{\tfrac{\kappa^2}{2} + \log \cos \kappa}}
\end{align*}
so that $P_n^{(+)}(\rho) \leq \exp \big(-\tfrac{3}{2}\tfrac{\rho^2}{{\xi} \tn^2} H(\kappa)\big)$ in terms of $H$ defined in (\ref{H}).

$(ii)$: By Chebyshev's inequality, for any $\tilde{\lambda}, \tilde{\rho} > 0$ it holds that
\begin{align*}
P^{(-)}_{n}(\tilde{\rho}) & := \p\pr{\sqrt{\nt}(\tau_{\nt \xi} - \xi \tn) < -\tilde{\rho}} = \p\Big(e^{-\tilde{\lambda} \sqrt{\nt} ( \tau_{\nt {\xi}} - {\xi} \tn)} > e^{\tilde{\lambda} \tilde{\rho}} \Big) \nonumber\\
& \leq e^{-\tilde{\lambda} \tilde{\rho}} e^{\tilde{\lambda} \sqrt{\nt} \xi\tn} \E \pl{e^{-\tilde{\lambda} \sqrt{\nt} \tau_{\nt {\xi}}}} = e^{-\tilde{\lambda} \tilde{\rho}} \pr{e^{-\frac{\tilde{\lambda} \tn}{\sqrt{\nt}}}\cosh \pr{\sqrt{\tfrac{2 \tilde{\lambda}\tn}{\sqrt{\nt}}}}}^{-\nt {\xi}},
\end{align*}
by (\ref{laplacecoshcos}). Proceed as in $(i)$ with the same substitution ${(\tilde{\rho},\tilde{\lambda}) = (\rho,\lambda)}$ 
for $\rho \in (0, \tfrac{\pi^2}{12}\xi\tn \sqrt{\nt})$ and $\lambda$ given by (\ref{lambda}) to get as the counterpart of (\ref{provinssitorstai}),
\begin{align*}
e^{-\tilde{\lambda} \tilde{\rho}} \pr{e^{-\frac{\tilde{\lambda} \tn}{\sqrt{\nt}}}\cosh\Big(\sqrt{\tfrac{2 \tilde{\lambda}\tn}{\sqrt{\nt}}}\Big)}^{-\nt {\xi}} = e^{-\lambda \rho} \pr{e^{-\frac{\kappa^2}{2}}\cosh\pr{\sqrt{\tfrac{3 {\rho}}{\xi \tn \sqrt{\nt}}}}}^{-\frac{4 \lambda^2 \tn^2 {\xi}}{\kappa^4}}.
\end{align*}
Similarly as in $(i)$, one then shows that $P^{(-)}_{n}(\rho) \leq \exp \pr{-\tfrac{3}{2}\tfrac{\rho^2}{{\xi} \tn^2} \hat{H}\big(\sqrt{\tfrac{3 {\rho}}{\xi \tn \sqrt{\nt}}}\big)}$ in terms of the function
$\hat{H}(x) := 1 + \tfrac{6}{x^4} \pr{-\tfrac{x^2}{2} + \log \cosh x}$ on $(0, \pi/2)$. It remains to show that $\hat{H} \geq H$ on $(0,\pi/2)$. Since $\hat{H}(x) - H(x) = 6x^{-4}\pr{-x^2 + \log \cosh(x) - \log \cos(x)}$
it suffices to show that
\begin{align}\label{varphiiiassertion}
\varphi(x) := -x^2 + \log \cosh(x) - \log \cos(x) \geq 0.
\end{align}
First, $\varphi'(x) = -2x + \tanh(x) + \tan(x)$. Secondly,
\begin{align*}
\varphi''(x) = -2 + \frac{1}{\cosh^2(x)} + \frac{1}{\cos^2(x)} & = \frac{\cos^2(x) + \cosh^2(x) - 2 \cosh^2(x) \cos^2(x)}{\cos^2(x)\cosh^2(x)}\\
& > \frac{(\cos(x)- \cosh(x))^2}{\cos^2(x)\cosh^2(x)} > 0
\end{align*}
since $\cos(x)\cosh(x) < 1$ for $x \in (0,\pi/2)$. Hence, $\varphi'$ is increasing on $(0, \pi/2)$, and since it also holds that $\varphi'(0+) = 0 = \varphi(0+)$, (\ref{varphiiiassertion}) follows.
\end{proof}
We continue with the tail estimate for $J_n$. The result resembles inequality (42) in \cite{Walsh}, but the time-dependent setting causes some changes.
\begin{lemma}\label{J}
Under Assumption \ref{assumption}, suppose that $\nt \in 2\N$, $\delta \in (0, \tfrac{\pi^2}{12+\pi^2}),$ and let $H$ be as in (\ref{H}). Then
\begin{align*}
& (i) \quad \p(\jtn > \nt(1+\delta)) \leq \exp \Big(-\tfrac{3}{2} \tfrac{\nt \delta^2}{1+\delta} H\Big(\sqrt{\tfrac{3\delta}{1+\delta}}\Big)\Big) \quad \text{ if } \quad \nt(1+\delta) \in 2\N, \\
& (ii) \quad \p(\jtn < \nt(1-\delta)) \leq \exp \Big(-\tfrac{3}{2} \tfrac{\nt \delta^2}{1-\delta} H\Big(\sqrt{\tfrac{3\delta}{1-\delta}}\Big)\Big) \quad \text{ if } \quad \nt(1-\delta) \in 2\N.
\end{align*}
\end{lemma}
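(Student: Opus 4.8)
The plan is to convert the event $\{J_n > \nt(1+\delta)\}$ into an event about the stopping time $\tau_{\nt(1+\delta)}$ and then invoke the tail estimate of Lemma~\ref{Lemmatechnical}. The key observation is that since $J_n = \inf\{2m \in 2\N : \tau_{2m} > \tn\}$ is the first even index at which the sequence $\tau_0, \tau_2, \tau_4, \dots$ exceeds $\tn$, and since $k \mapsto \tau_k$ is increasing, the event $\{J_n > \nt(1+\delta)\}$ is exactly the event that $\tau_{\nt(1+\delta)} \leq \tn$ (here we use that $\nt(1+\delta)$ is an even integer, which is guaranteed by the hypothesis $\nt(1+\delta)\in\N$ together with $\nt$ even — actually one should check parity; if $\nt(1+\delta)$ fails to be even one rounds up to the next even integer, which only makes the probability smaller, so the stated bound still holds). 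Thus
\begin{align*}
\p(J_n > \nt(1+\delta)) \leq \p(\tau_{\nt(1+\delta)} \leq \tn) = \p\big(\tau_{\nt(1+\delta)} - (1+\delta)\tn \leq -\delta\tn\big).
\end{align*}

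**Next**, I would apply Lemma~\ref{Lemmatechnical}~$(ii)$ with the choice $\xi = 1+\delta$ and $\rho = \delta\tn\sqrt{\nt}$, after multiplying through by $\sqrt{\nt}$ inside the probability, i.e.\ rewriting the event as $\sqrt{\nt}(\tau_{\nt\xi} - \xi\tn) < -\rho$. One must check the admissibility condition $\rho \in (0, \tfrac{\pi^2}{12}\xi\tn\sqrt{\nt})$, which reduces to $\delta < \tfrac{\pi^2}{12}(1+\delta)$, i.e.\ $\delta < \tfrac{\pi^2}{12+\pi^2}$ — precisely the hypothesis imposed on $\delta$. Plugging these values into the conclusion of Lemma~\ref{Lemmatechnical}~$(ii)$, the exponent $-\tfrac{3}{2}\tfrac{\rho^2}{\xi\tn^2}H(\sqrt{3\rho/(\xi\tn\sqrt{\nt})})$ becomes $-\tfrac{3}{2}\tfrac{\nt\delta^2}{1+\delta}H(\sqrt{3\delta/(1+\delta)})$ after substituting $\rho = \delta\tn\sqrt{\nt}$ and $\xi = 1+\delta$ and simplifying, which is exactly the bound claimed in $(i)$.

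**For part $(ii)$**, the argument is symmetric: $\{J_n < \nt(1-\delta)\}$ means that the first even index with $\tau_{2m} > \tn$ is strictly below $\nt(1-\delta)$, which forces $\tau_{\nt(1-\delta)} > \tn$ (again modulo the parity/rounding remark, and here rounding $\nt(1-\delta)$ down to an even integer only enlarges the probability in the direction we need — one should be slightly careful and note that if $\nt(1-\delta)$ is an even integer as assumed, no rounding is needed). Then
\begin{align*}
\p(J_n < \nt(1-\delta)) \leq \p(\tau_{\nt(1-\delta)} > \tn) = \p\big(\tau_{\nt(1-\delta)} - (1-\delta)\tn > \delta\tn\big),
\end{align*}
and I would apply Lemma~\ref{Lemmatechnical}~$(i)$ with $\xi = 1-\delta$ and $\rho = \delta\tn\sqrt{\nt}$; the admissibility condition $\delta < \tfrac{\pi^2}{12}(1-\delta)$ is implied by (and weaker than, in fact stronger than — one checks $\tfrac{\pi^2}{12+\pi^2}$ handles the $1+\delta$ case, and $1-\delta$ needs $\delta < \tfrac{\pi^2}{12+\pi^2}$ as well since $1-\delta < 1+\delta$; actually the binding constraint is the one for part $(i)$, so the common hypothesis $\delta\in(0,\tfrac{\pi^2}{12+\pi^2})$ covers both), giving the stated bound with $1-\delta$ in place of $1+\delta$.

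**The main obstacle** I anticipate is bookkeeping rather than conceptual: making sure the index $\nt(1\pm\delta)$ is handled correctly with respect to parity (the stopping time $J_n$ only "sees" even indices, so one genuinely needs $\nt(1\pm\delta)$ to be an even integer, or else to argue that replacing it by the nearest even integer moves the probability in the favorable direction and changes the exponent only by a negligible amount) and verifying that all the algebraic substitutions into $H$ land exactly on the claimed expressions. The hypotheses are clearly engineered so that the admissibility condition of Lemma~\ref{Lemmatechnical} is met, so no real analytic difficulty remains once the reduction to $\tau_{\nt(1\pm\delta)}$ is in place.
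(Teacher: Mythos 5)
Your proposal is correct and is essentially identical to the paper's proof: both reduce $\{J_n > \nt(1+\delta)\}$ to $\{\tau_{\nt(1+\delta)} \le \tn\}$ and $\{J_n < \nt(1-\delta)\}$ to $\{\tau_{\nt(1-\delta)} > \tn\}$, and then apply Lemma \ref{Lemmatechnical} $(ii)$ resp.\ $(i)$ with $\xi = 1\pm\delta$ and $\rho = \delta\tn\sqrt{\nt}$. One small correction to your aside: the admissibility condition for $\xi=1+\delta$ reduces to the weaker requirement $\delta < \pi^2/(12-\pi^2)$, and it is the $\xi=1-\delta$ case of part $(ii)$ that is binding and yields exactly $\delta < \pi^2/(12+\pi^2)$, not part $(i)$ as you state.
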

\begin{proof}
Fix $\nt \in 2\N$, $\delta \in (0, \tfrac{\pi^2}{12+\pi^2})$, and let $\rho := \delta \tn \sqrt{\nt}$.
For $(i)$, let $\xi := 1 + \delta$ and suppose that $\nt(1+\delta) = \nt \xi \in 2\N$. Then
\begin{align*}
\p(J_n > \nt \xi) & = \p(\tau_{\nt \xi} < \tn) = \p(\sqrt{\nt}(\tau_{\nt\xi} - \xi\tn) < -\rho) \leq \exp \pr{-\tfrac{3}{2}\tfrac{\rho^2}{\xi \tn^2} H\pr{\sqrt{\tfrac{3 {\rho}}{\xi \tn \sqrt{\nt}}}}}
\end{align*}
by (\ref{technicalbound2}), since the choice of $\delta$ ensures that the pair $(\xi, \rho)$ satisfies the assumptions of Lemma \ref{Lemmatechnical}. To show $(ii)$,
let $\xi := 1 - \delta$ and suppose that $\nt(1-\delta) = \nt \xi \in 2\N$. Then by (\ref{technicalbound1}),
\begin{align*}
\p(J_n < \nt \xi) & = \p(\tau_{\nt \xi} > \tn) = \p(\sqrt{\nt}(\tau_{\nt \xi} - \xi \tn) > \rho) \leq \exp \pr{-\tfrac{3}{2}\tfrac{\rho^2}{\xi\tn^2} H\pr{\sqrt{\tfrac{3 {\rho}}{\xi \tn \sqrt{\nt}}}}}
\end{align*}
since the pair $(\xi, \rho)$ satisfies the assumptions of Lemma \ref{Lemmatechnical} due to the choice of $\delta$.
\end{proof}

\subsection{Moment estimates for the difference \texorpdfstring{$J_n - \nt$}{J_n - \nt}}
To derive an estimate for $\E \abs{\jtn - \nt}^{K}$ for any $K > 0$, we recall (see e.g.~\cite[Theorem 14.12]{DasGupta}) a version of the Azuma--Hoeffding inequality.
\begin{proclaim}[Azuma--Hoeffding inequality]\label{PropAH}
Suppose that $(M_j)_{j=0,1, \dots}$ is a martingale with ${M_0 = 0}$. In addition, assume that for all $i \geq 1$ there exists a constant $\alpha_i > 0$ such that ${\abs{M_{i} - M_{i-1}} \leq \alpha_i}$ a.s. Then, for all $k \in \N$ and every $t > 0$,
$$\p(M_k \geq t) \leq \exp \Big({-}\tfrac{t^2}{2 \sum_{j=1}^{k} \alpha_i^2}\Big).$$
\end{proclaim}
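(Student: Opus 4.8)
The plan is to run the standard exponential-moment (Chernoff--Cram\'er) argument. First I would fix $\lambda > 0$ and apply Markov's inequality to the nonnegative random variable $e^{\lambda M_k}$:
\[
\p(M_k \geq t) = \p\big(e^{\lambda M_k} \geq e^{\lambda t}\big) \leq e^{-\lambda t}\,\E\big[e^{\lambda M_k}\big],
\]
so that the whole statement reduces to a good upper bound on the moment generating function $\E\big[e^{\lambda M_k}\big]$, followed by an optimization over $\lambda$ at the end.

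To bound $\E\big[e^{\lambda M_k}\big]$ I would peel off one increment at a time. Writing $D_i := M_i - M_{i-1}$ and letting $(\F_j)_{j=0,1,\dots}$ be a filtration with respect to which $(M_j)$ is a martingale, the tower property gives
\[
\E\big[e^{\lambda M_k}\big] = \E\big[e^{\lambda M_{k-1}}\,\E[e^{\lambda D_k}\mid \F_{k-1}]\big],
\]
and the martingale property yields $\E[D_k\mid\F_{k-1}] = 0$ a.s. The key one-step estimate is Hoeffding's lemma: if a random variable $D$ satisfies $\E[D\mid\F_{k-1}]=0$ and $\abs{D}\leq \alpha_k$ a.s., then $\E[e^{\lambda D}\mid\F_{k-1}]\leq e^{\lambda^2\alpha_k^2/2}$ a.s. I would prove this by convexity of $x\mapsto e^{\lambda x}$ on the interval $[-\alpha_k,\alpha_k]$: bound $e^{\lambda D}$ above by the chord joining the endpoints $(\pm\alpha_k,e^{\pm\lambda\alpha_k})$, take conditional expectations so that the linear term drops out by the martingale property, and then check that the resulting deterministic quantity $\log\cosh(\lambda\alpha_k)$ is at most $\lambda^2\alpha_k^2/2$; the latter follows from $\frac{d^2}{ds^2}\log\cosh(s)\leq 1$ together with the vanishing of $\log\cosh$ and its first derivative at $s=0$.

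Iterating the peeling over $i = k, k-1, \dots, 1$ and using $M_0 = 0$ gives $\E\big[e^{\lambda M_k}\big]\leq \exp\big(\tfrac{\lambda^2}{2}\sum_{i=1}^{k}\alpha_i^2\big)$, hence $\p(M_k\geq t)\leq \exp\big(-\lambda t + \tfrac{\lambda^2}{2}\sum_{i=1}^{k}\alpha_i^2\big)$ for every $\lambda>0$; choosing $\lambda = t/\sum_{i=1}^{k}\alpha_i^2$ minimizes the exponent and yields the claimed bound. The only step that is more than routine bookkeeping is Hoeffding's lemma, in particular pinning down the constant $\tfrac12$ in the exponent via the convexity trick and the elementary $\log\cosh$ estimate; everything else is the standard Chernoff machinery. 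Since this is a textbook fact, one may alternatively simply invoke \cite[Theorem 14.12]{DasGupta}, as the paper already does.
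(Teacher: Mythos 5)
Your proof is correct and complete: the Chernoff bound via Markov's inequality, the one-step Hoeffding lemma proved by convexity plus the elementary estimate $\log\cosh(s)\leq s^2/2$, the telescoping over increments, and the optimization $\lambda = t/\sum_{i=1}^k\alpha_i^2$ all check out. The paper itself gives no proof of this statement---it simply recalls the inequality and cites \cite[Theorem 14.12]{DasGupta}---and your argument is precisely the standard textbook proof behind that citation, so there is nothing to reconcile.
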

For $t_0 = 0$, the following statement can be found in \cite[Proposition 11.2 $(iv)$]{Walsh}. The proof, however, does not cover
the case which corresponds to \eqref{sufficientHighmoments} below for the set $A_3$. We will prove here a time-dependent extension of
this statement.
\begin{proclaim}\label{PropAzumaMoments}
Suppose that Assumption \ref{assumption} holds, and let $K > 0$. Then there exists a constant $C_K > 0$ depending at most on $K$ such that
\begin{align}\label{CCC}
\E \abs{\jtn - \nt}^{K} \leq C_K \nt^{K/2} \quad \text{ for all } \, (n, t_0) \in 2\N {\times} [0,T).
\end{align}
\end{proclaim}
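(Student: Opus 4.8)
The plan is to derive the moment bound \eqref{CCC} by combining the exponential tail estimates of Lemma \ref{J} with the elementary identity $\E|Y|^K = K\int_0^\infty t^{K-1}\p(|Y| > t)\,dt$, applied to $Y = J_n - \nt$. Fix $K > 0$ and write $\nt = \nt(n,t_0)$. Since $\nt \geq 2$, it suffices to produce, uniformly in $(n,t_0)$, a bound of the form $\E|J_n - \nt|^K \leq C_K \nt^{K/2}$; after the substitution $t = \nt^{1/2} s$ this reduces to showing $\int_0^\infty s^{K-1}\p(|J_n - \nt| > \nt^{1/2} s)\,ds \leq C_K$ with $C_K$ independent of $n$ and $t_0$.

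The main technical annoyance is that Lemma \ref{J} requires $\nt(1\pm\delta) \in \N$ and $\delta < \tfrac{\pi^2}{12+\pi^2}$, so the tail bound is not literally available for every threshold. I would handle this as follows. First, for $|J_n - \nt| > \nt^{1/2}s$ with $s$ in a bounded range, i.e. $s \leq s_0 := \tfrac{\pi^2}{12+\pi^2}\sqrt{\nt}$ (equivalently $\delta := s/\sqrt{\nt} < \tfrac{\pi^2}{12+\pi^2}$), I would choose $\delta$ slightly larger than $s/\sqrt{\nt}$ so that $\nt(1\pm\delta)$ lands on an integer (since $J_n$ takes only even integer values, $\{J_n > \nt(1+\delta)\} = \{J_n \geq \lceil \nt(1+\delta)\rceil_{\text{even}}\}$, so rounding the threshold up/down to a nearby admissible integer only weakens the event by a controlled amount, losing at most a constant factor and a shift in $\delta$ by $O(1/\nt)$). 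Applying Lemma \ref{J} then gives
\begin{align*}
\p\big(|J_n - \nt| > \nt^{1/2}s\big) \leq 2\exp\Big(-c\, s^2\, H\big(c' s/\sqrt{\nt}\big)\Big)
\end{align*}
for absolute constants $c, c' > 0$, where I use that $H(0+) = 1/2 > 0$ together with continuity of $H$ to bound $H(c's/\sqrt{\nt}) \geq \tfrac14$ once $c's/\sqrt{\nt}$ is small, i.e. for $s$ up to a constant multiple of $\sqrt{\nt}$; for $s$ comparable to $\sqrt{\nt}$ one keeps $H$ positive by the choice of the range $\delta < \tfrac{\pi^2}{12+\pi^2}$, which was precisely designed to keep the argument of $H$ inside $(0,\pi/2)$ with $H$ positive there (this can be checked: $H$ is positive on a neighbourhood of $0$ of length corresponding exactly to this $\delta$-range). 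Hence on $0 \leq s \leq s_0$ the integrand is dominated by $s^{K-1}e^{-c s^2/4}$, which is integrable with a bound independent of $n$.

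For the remaining range $s > s_0$, one cannot invoke Lemma \ref{J} directly, but here a cruder estimate suffices: on $\{J_n - \nt > \nt^{1/2}s\}$ with $s > s_0$ one has $J_n > \nt(1 + s_0\nt^{-1/2})$, so the tail is bounded by $\p(J_n > \nt(1+\delta_0))$ for the \emph{fixed} admissible $\delta_0$ just below $\tfrac{\pi^2}{12+\pi^2}$ (again rounding to an admissible integer threshold), which by Lemma \ref{J} is at most $\exp(-c_0 \nt)$ for an absolute $c_0 > 0$; the lower tail $\{J_n < \nt(1-\delta)\}$ is automatically empty once $\delta \geq 1$, and for $\tfrac{\pi^2}{12+\pi^2} \leq s/\sqrt{\nt} < 1$ it is likewise bounded by $\exp(-c_0\nt)$. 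Thus $\int_{s_0}^\infty s^{K-1}\p(|J_n-\nt| > \nt^{1/2}s)\,ds$; but this is not quite finite as written because the integrand does not decay in $s$ — the fix is to note that $J_n$ is a.s. finite but unbounded, so for very large $s$ one instead observes $\{J_n - \nt > \nt^{1/2}s\} \subseteq \{\tau_{\nt + \lceil\nt^{1/2}s\rceil} < \tn\}$ and applies the full strength of Lemma \ref{Lemmatechnical} (inequality \eqref{technicalbound2}) with $\xi = 1 + s/\sqrt{\nt}$ growing, whose right-hand side $\exp\big(-\tfrac32\tfrac{\rho^2}{\xi\tn^2}H(\sqrt{3\rho/(\xi\tn\sqrt{\nt})})\big)$ with $\rho = s\sqrt{\nt}\,\tn$ simplifies — after substituting — to $\exp(-c\,\nt\,\xi\,H(\kappa))$ with $\kappa$ bounded away from $\pi/2$, giving a bound like $\exp(-c\nt s)$ which decays geometrically in $s$. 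Integrating $s^{K-1}e^{-c\nt s}$ over $s \geq s_0$ and using $\nt s_0 \gtrsim \nt^{3/2} \to \infty$ shows this contribution is $O(e^{-c'\nt})$, negligible. Summing the two ranges yields $\int_0^\infty s^{K-1}\p(|J_n-\nt|>\nt^{1/2}s)\,ds \leq C_K$ uniformly, which is the claim.

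The step I expect to be the main obstacle is the bookkeeping around the integrality constraints $\nt(1\pm\delta)\in\N$ in Lemma \ref{J}: one must verify that replacing a generic threshold by the nearest admissible even-integer value of $J_n$ costs only a universal constant factor and shifts $\delta$ by $O(1/\nt)$ (harmless since $H$ is locally Lipschitz near its relevant arguments and $\nt\delta^2$ changes by $O(\delta)$), and that the positivity of $H$ on the full $\delta$-range claimed is genuinely available — this is exactly why the hypothesis $\delta < \tfrac{\pi^2}{12+\pi^2}$ appears, and one should double-check that $H$ stays positive up to that endpoint. Everything else is a routine splitting of the integral and a Gaussian-type moment estimate.
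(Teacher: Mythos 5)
Your treatment of the bulk range $|J_n-\nt|\leq \nt$ is sound and matches the paper's Steps 1--2: the same tail-integral identity, the same use of Lemma \ref{J}, and essentially the same device for the integrality constraint (the paper uses $\delta_{\nt}(u)=\tfrac{2}{\nt}\lfloor\tfrac{\nt u}{2}\rfloor$, which makes $\nt(1\pm\delta)$ an even integer exactly, rather than rounding the threshold). The genuine gap is in your far-tail argument. For the event $\{J_n>\nt\xi\}=\{\tau_{\nt\xi}\leq\tn\}$ the relevant deviation level is $\rho=(\xi-1)\tn\sqrt{\nt}$, and the hypothesis of Lemma \ref{Lemmatechnical}, $\rho<\tfrac{\pi^2}{12}\xi\tn\sqrt{\nt}$, then reads $\xi<\tfrac{12}{12-\pi^2}\approx 5.63$. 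Equivalently, the argument $\kappa=\sqrt{3(\xi-1)/\xi}$ of $H$ tends to $\sqrt{3}>\pi/2$ as $\xi\to\infty$, so your claim that $\kappa$ stays ``bounded away from $\pi/2$'' is false, and Lemma \ref{Lemmatechnical} simply does not apply once $J_n$ exceeds roughly $5.63\,\nt$. Since this is exactly the regime where you need decay in $s$ to make $\int s^{K-1}\p(\cdot)\,ds$ converge (as you yourself noticed, a constant bound $e^{-c_0\nt}$ does not suffice there), the proof as written does not close.

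The paper handles this regime with a different tool: it writes $J_n\geq 2m$ as a lower-deviation event for the sum $\sum_{i=1}^{2m-2}\zeta_i$ of the normalized i.i.d.\ increments $\zeta_i=\tfrac{n}{T}(\tau_i-\tau_{i-1})$, truncates the increments at a level $N$ chosen so that $c_N=\E[\zeta_i\wedge N]\in(3/4,1)$, and applies the Azuma--Hoeffding inequality to get $\p(J_n\geq 2m)\leq\exp\big({-}\tfrac{((2m-2)c_N-\nt)^2}{2(2m-2)N^2}\big)$, which decays geometrically in $m$ for $m\gg\nt$ and makes the far-tail integral $O(1)$. Your approach could be repaired without Azuma--Hoeffding by rerunning the Chernoff bound from scratch with the exact Laplace transform $\E[e^{-\lambda\tau_1}]=\cosh(h\sqrt{2\lambda}/\sigma)^{-1}$ (which is finite for \emph{all} $\lambda>0$, unlike the positive-exponential side) and a choice of $\lambda$ adapted to the deep-deviation regime rather than the one fixed in (\ref{lambda}); but some such additional argument is indispensable, and invoking Lemma \ref{Lemmatechnical} outside its stated range is not a substitute for it.
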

\begin{proof}
It suffices to prove the claim for $K \geq 2$, since the case $K \in (0,2)$ then follows by Jensen's inequality.
Since $\abs{\jtn - {\nt}}$ is a non-negative random variable,
\begin{align*}
\frac{1}{K} \E \abs{\jtn - {\nt}}^K & = \int_{0}^{\infty} z^{K-1} \p(\abs{\jtn - {\nt}} > z) dz.
\end{align*}
We show that there exist constants $C^{(1)}_K, C^{(2)}_K, C^{(3)}_K > 0$ corresponding to the sets $A_1 = (0,2], A_2 =(2, \nt]$ and $A_3 = (\nt, \infty)$
such that
\begin{align}\label{sufficientHighmoments}
I_k(\nt) := \int_{A_k} z^{K-1} \p(\abs{\jtn - {\nt}} > z) dz \leq C_{K}^{(k)} \nt^{K/2} \quad \text{ for all } \nt.
\end{align}
\newline
\textbf{Step 1:} Since $K \geq 2$ and $\nt \geq 2$, we have that
\begin{align*}
I_1(\nt) = \int_{0}^2 z^{K-1} \p(\abs{\jtn - {\nt}} > z) dz \leq \int_{0}^2 z^{K-1} dz \leq 2^K/K \leq C^{(1)}_K \nt^{K/2}.
\end{align*}
\newline
\textbf{Step 2:} Suppose that $\nt > 2$ and define $\delta_{\nt}(u) := \frac{2}{\nt} \floor{\frac{\nt u}{2}}$. Then
\begin{align}\label{iikaksiekaestimate}
I_2(\nt) & = \int_2^{\nt} z^{K-1} \p(\abs{J_n - \nt} > z) dz = \nt^{K} \int_{2/\nt}^{1} u^{K-1} \p(\abs{J_n - \nt} > \nt u) du \nonumber\\
& \leq \nt^{K} \int_{2/\nt}^{1} u^{K-1} \p(\abs{J_n - \nt} > \delta_{\nt}(u) \nt) du.
\end{align}
Fix a constant $a \in (0,1]$ small enough such that for every $m \in \N$,
\begin{align}\label{conditionsona}
\delta_{m}(u) < \tfrac{\pi^2}{12 +\pi^2} \quad \text{ and } \quad H\Big(\sqrt{\tfrac{3\delta_{m}(u)}{1+\delta_{m}(u)}}\Big) \wedge H\Big(\sqrt{\tfrac{3\delta_{m}(u)}{1-\delta_{m}(u)}}\Big) > 1/4 \quad \text{hold for all } \quad u \leq a,
\end{align}
where the function $H$ is defined below in (\ref{H}). Depending on the value of $\nt$, we split the right-hand side of (\ref{iikaksiekaestimate}) into the sum of the integrals
\begin{align*}
I_{2,1}(\nt) & := \nt^{K}\int_{2/\nt}^{a} u^{K-1}\p(\abs{J_n - \nt} > \delta_{\nt}(u) \nt)du \quad (\text{for } a > 2/\nt, \text{ otherwise } 0),\\
I_{2,2}(\nt) & := \nt^{K}\int_{a \vee (2/\nt)}^{1} u^{K-1} \p(\abs{J_n - \nt} > \delta_{\nt}(u) \nt) du.
\end{align*}
If $a \in (2/\nt, 1)$, by (\ref{conditionsona}) and the fact that $\nt(1+\delta_{\nt}(u))$ and $\nt(1-\delta_{\nt}(u))$ are even integers, we may apply Lemma \ref{J} and estimate
\begin{align}\label{iikaksyks}
I_{2,1}(\nt) & = \nt^{K} \int_{2/\nt}^{a} u^{K-1} \Big[\p\big(J_n > \nt(1+\delta_{\nt}(u))\big) + \p\big(J_n < \nt(1-\delta_{\nt}(u))\big)\Big] du \nonumber\\
& \leq \nt^{K} \int_{2/\nt}^{a} u^{K-1} \pl{\exp \pr{-\tfrac{3}{8}\tfrac{\nt \delta_{\nt}^2(u)}{1+\delta_{\nt}(u)}}+ \exp \pr{-\tfrac{3}{8}\tfrac{\nt \delta_{\nt}^2(u)}{1-\delta_{\nt}(u)}}}du \nonumber\\
& \leq 2 \nt^{K} \int_{2/\nt}^{a} u^{K-1} \exp \pr{-\tfrac{3}{8}\tfrac{\nt \delta_{\nt}^2(u)}{1+\delta_{\nt}(u)}}du.
\end{align}
By the properties of the floor function, for $u \in (0,1]$ it holds that
\begin{align}\label{floorpuljaus}
\frac{\delta_{\nt}^2(u)}{1+\delta_{\nt}(u)} = \frac{\big( \tfrac{2}{\nt} \floor{\tfrac{\nt u}{2}}\big)^2}{1 + \tfrac{2}{\nt} \floor{\tfrac{\nt u}{2}}} \geq \frac{\big( \tfrac{2}{\nt} \pr{\tfrac{\nt u}{2} - 1}\big)^2}{1+u} > \frac{\big(u-\tfrac{2}{\nt}\big)^2}{2},
\end{align}
and thus the right-hand side of (\ref{iikaksyks}) can be bounded from above by
\begin{align}\label{gaussianintegraali}
& 2 \nt^{K} \int_{2/\nt}^{a} u^{K-1} e^{-\tfrac{3\nt(u-2/\nt)^2}{16}}du \leq 2 \nt^{K} \int_{0}^{1-2/\nt} \big(u+\tfrac{2}{\nt}\big)^{K-1} e^{-\frac{3\nt u^2}{16}} du\nonumber\\
& \leq 2^{K-1} \nt^{K} \int_{0}^{1-2/\nt} \pr{u^{K-1} + (\tfrac{2}{\nt})^{K-1}}e^{-\frac{3\nt u^2}{16}} du.
\end{align}
By substituting $x = u \sqrt{\nt}$ and identifying the right-hand side of (\ref{gaussianintegraali}) as an integral with respect to a Gaussian measure,
it can be verified that this integral multiplied by $\nt^{K/2}$ is bounded by some constant $\tilde{C}^{(2,1)}_K>0$. Hence,
$I_{2,1}(\nt) \leq C^{(2,1)}_K \nt^{K/2}$ for all $\nt$, where ${C^{(2,1)}_K > 0}$ depends only on $K$.

Let us then consider the integral $I_{2,2}(\nt)$. If $a/3 \leq 2/\nt$, then $\nt \leq 6/a$, $a \leq a \vee (2/\nt)$, and thus $I_{2,2}(\nt) \leq 6^K(K a^K)^{-1}.$
On the other hand, if $a/3 \in (2/\nt, 1)$, by {Lemma \ref{J}},
\begin{align*}
I_{2,2}(\nt) & = \nt^{K}\int_{a}^{1} u^{K-1} \Big[ \p\big(J_n > \nt(1+ \delta_{\nt}(u))\big) + \p\big(J_n < \nt(1-\delta_{\nt}(u))\big)\Big] du\\
& \leq \nt^{K} \int_{a}^{1} u^{K-1} \Big[ \p\big(J_n > \nt(1+ \delta_{\nt}(a/3))\big) + \p\big(J_n < \nt(1-\delta_{\nt}(a/3))\big)\Big] du\\
& \leq \nt^{K} \Big[ \exp \pr{-\tfrac{3}{8} \tfrac{\nt \delta_{\nt}(a/3)^2}{1+\delta_{\nt}(a/3)}} +\exp \pr{-\tfrac{3}{8} \tfrac{\nt \delta_{\nt}(a/3)^2}{1-\delta_{\nt}(a/3)}}\Big] \int_{a}^{1} u^{K-1}  du \quad \quad \quad\\
& \leq 2 \nt^{K} \exp \pr{-\tfrac{3}{8} \tfrac{\nt \delta_{\nt}(a/3)^2}{1+\delta_{\nt}(a/3)}} \int_{a}^{1} u^{K-1}du\\
& \leq 2 \nt^{K} \exp \pr{-\tfrac{3\nt(a/3-2/\nt)^2}{16}}.
\end{align*}
Here we used the fact that $u \mapsto \delta_{\nt}(u)$ is nondecreasing, that $\nt(1{+}\delta_{\nt}(a/3))$ and $\nt(1{-}\delta_{\nt}(a/3))$ are (even) integers, condition (\ref{conditionsona}), and inequality (\ref{floorpuljaus}). Notice that the right-hand side converges to zero as $\nt \to \infty$. Consequently, there exists a constant $C^{(2,2)}_K > 0$ such that $I_{2,2}(\nt) \leq C^{(2,2)}_K$ for all $\nt$, and 
(\ref{sufficientHighmoments}) for $k = 2$ follows.

\noindent \textbf{Step 3:} To estimate $I_3(\nt)$, we apply the Azuma--Hoeffding inequality to the tail distribution of the random variable $J_n = \inf \set{2m \in 2\N: \tau_{2m} > \tn}$. Recall that $\tau_{i} - \tau_{i-1}, i = 1,2, \dots,$ are i.i.d.~(see Subsection \ref{firstexittimes}) and that $\frac{n}{T}\theta_n = \nt$ according to (\ref{deftnnt}). Let $\zeta_i := \frac{n}{T}(\tau_i - \tau_{i-1})$, $i \geq 1$. Then, for all $m \in \N$, we have
\begin{align}\label{developtjn}
\p(\jtn \geq 2m) & = \p(\tau_{2m-2} \leq \tn) = \p\Big(\sum_{i=1}^{2m-2} \zeta_i \leq \tfrac{n}{T}\theta_n\Big) \leq \p\Big(\sum_{i=1}^{2m-2} \zeta_i \wedge N \leq \nt\Big) \nonumber\\
& = \p\Big(\sum_{i=1}^{2m-2}(c_N - \zeta_i \wedge N) \geq (2m{-}2)c_N -\nt\Big),
\end{align}
where $N \in \N$ is chosen such that ${3/4 < c_N := \E [\zeta_i \wedge N] < \E[\zeta_i] = 1}$.
Then $\abs{\E \pl{\zeta_i \wedge N } - \zeta_i \wedge N} \leq N$ for all $i \geq 1$, and by (\ref{developtjn}) and the Azuma--Hoeffding inequality (Proposition \ref{PropAH}),
\begin{align*}
\p(\jtn \geq 2m) \leq \exp \pr{-\tfrac{((2m-2)c_N - \nt)^2}{2(2m-2)N^2}}, \quad m \in \N.
\end{align*}
Since $J_n > 0$, we have
\begin{align*}
I_3(\nt) & = \int_{\nt}^{\infty} z^{K-1} \pp{\jtn - {\nt} \geq z} dz \nonumber\\
& = \int_{\nt}^{2\nt+2} z^{K-1} \pp{J_n \geq z + \nt} dz + \sum_{m=3}^{\infty} \int_{(m-1)\nt + 2}^{m \nt + 2} z^{K-1} \p(J_n \geq z + \nt)dz\\
& \leq \int_{\nt}^{2\nt+2} z^{K-1} \pp{J_n \geq 2\nt} dz + \sum_{m=3}^{\infty} \int_{(m-1)\nt + 2}^{m \nt + 2} z^{K-1} \p(J_n \geq m\nt + 2)dz\\
& \leq \int_{\nt}^{2\nt{+}2} z^{K-1} e^{-\tfrac{\nt}{2N^2}\tfrac{((2-2/\nt)c_N - 1)^2}{(2-2/\nt)}} dz + \sum_{m=3}^{\infty} \int_{(m{-}1)\nt{+}2}^{m \nt{+}2} z^{K-1} e^{-\tfrac{\nt}{2N^2}\tfrac{(mc_n - 1)^2}{m}}dz\\
& =: I_{3,1}(\nt) + I_{3,2}(\nt).
\end{align*}
Since $c_N \in (3/4,1)$, there exist constants $c, c' > 0$ such that
\begin{align*}
I_{3,1}(\nt) & = \int_{\nt}^{2\nt{+}2} z^{K-1} e^{-\tfrac{\nt}{2N^2}\tfrac{((2-2/\nt)c_N - 1)^2}{(2-2/\nt)}} dz \leq 2(2\nt+2)^{K-1} e^{-\tfrac{\nt c}{N^2}} \leq C_K^{(3,1)},\\
I_{3,2}(\nt) & = \sum_{m=3}^{\infty} \int_{(m{-}1)\nt{+}2}^{m \nt{+}2} z^{K-1} e^{-\tfrac{\nt}{2N^2}\tfrac{(mc_N - 1)^2}{m}}dz \leq \sum_{m=3}^{\infty} (m\nt + 2)^K e^{-\tfrac{\nt m c'}{N^2}} \leq C_K^{(3,2)},
\end{align*}
where $C_K^{(3,1)}, C_K^{(3,2)} > 0$ depend at most on $K$. This proves (\ref{sufficientHighmoments}) for ${k = 3}$.
\end{proof}
\appendix
\section{Appendix}
\subsection{The class $\gbv$}\label{appendixGBV}
For a function $g : \R \to \R$, let 
$$T_g(x) := \sup  \set{\sum_{i=1}^{N} \abs{g(x_i) - g(x_{i-1})}, \, N \in \N, \, -\infty < x_0 < x_1 < \dots < x_N = x}.$$
If $\lim_{x \to \infty} T_g(x) < \infty$, the function $g$ is said to be of bounded variation. The class of functions with this property will be denoted by $BV$.

A function $g \in BV$ is by definition a bounded function. An error estimation carried out merely for the class $BV$ would rule out e.g.~polynomials, which on the other hand have bounded variation on every compact interval. Therefore, instead of the class $BV$, we consider a class of functions of generalized bounded variation allowing exponential growth. In order to find an applicable representation for a large class of such functions, we will follow the presentation given in \cite{Avikainen}.

Recall the class $\mathcal{M}$ given by Definition \ref{class-of-set-functions}, which consists of set functions $\mu$ (acting on bounded Borel sets on $\R$) that can be written as a difference of two measures $\mu^{1}, \mu^{2} : \B(\R) \to [0,\infty]$ such that $\mu^1(K)$ and $\mu^2(K)$ are finite for all compact sets $K \in \B(\R)$. In \cite[Theorem 3.3]{Avikainen} it is proved that such a decomposition can be chosen to be orthogonal and minimal: There exists a unique pair of measures $\mu^+,\mu^-$ on $\B(\R)$ such that $\mu^+$ and $\mu^{-}$ are mutually singular, and $\mu^{+} \leq \mu^{1}$ and $\mu^{-} \leq \mu^{2}$ hold for all the other decompositions $\mu = \mu^1 - \mu^2$. Even though $\mu \in \mathcal{M}$ is not itself a signed measure (it is undefined on unbounded sets), the aforementioned result, based on the Hahn decomposition theorem, allows us to define the total variation measure
associated to $\mu$ by setting
$$\abs{\mu} : \B(\R) \to [0, \infty], \quad \abs{\mu} := \mu^+ + \mu^-.$$
Consequently, the integral in (\ref{bump-function-condition}) appearing in Definition \ref{DefinitionGBV} of the class $\gbv$ is defined. 

The inclusion $BV \subset \gbv$ follows as a special case of the result \cite[Theorem 4.3]{Avikainen}.
\begin{remark}[Polynomials are contained in $\gbv$]\label{polyonomials_are_included} 
To show that every polynomial $f(x) = \sum_{k=0}^{N} a_k x^k$, $a_k \in \R, N \in \N$ belongs to the class $\gbv$, let
$$c = a_0, \quad d \mu = \sum_{k=1}^{N} ka_k x^{k-1} dx, \quad \text{ and } \quad \mathcal{J} = \emptyset$$
to be the parameters appearing in the representation (\ref{GBVdefrep}) for the function $f$. It remains to notice that this $\mu$ satisfies the condition (\ref{bump-function-condition}), since for every $\beta > 0$, $$\int_{\R} e^{-\beta\abs{x}} d|\mu|(x) = \int_{\R} e^{-\beta\abs{x}} \bigg|\sum_{k=1}^N k a_k x^{k-1}\bigg| dx < \infty.$$
\end{remark}

\subsection{Auxiliary results for Section \ref{local-error-section}}
The following identities are applied in the proofs of Propositions \ref{Propgbvlinearized} and \ref{localerrorrepresentationPROP}.
\begin{lemma}
Let $h > 0$ and recall the operators $\varPi_e$ and $\varPi_o$ given by Definition \ref{Deflinearizationop}.
\begin{itemize}
\item[$(i)$] For all $\xi \in \R$, it holds that
\begin{align}\label{DiracRep}
\varPi_e \I_{\set{\xi}}(x) & = \frac{\I_{\{\xi \in \Z^h_e\}}}{4h}\big(\abs{x-(\xi{-}2h)} + \abs{x-(\xi{+} 2h)} - 2\abs{x-\xi}\big), \quad x \in \R.
\end{align}
\item[$(ii)$] If $y \in [2kh, (2k{+}2)h)$ for $k \in \Z$, then in terms of $d_o$ defined in (\ref{defofdode}),
\begin{align}\label{form}
& \abs{\varPi_o \varPi_e \I_{(y,\infty)}(x) - \varPi_e \I_{(y,\infty)}(x)} = \frac{d_o(x)}{4h} \I_{[(2k{-}1)h, (2k{+}3)h)}(x), \quad x \in \R.
\end{align}
\end{itemize}
\end{lemma}
\begin{proof}
$(i)$: It is obvious by the definition of $\varPi_e$ that ${\varPi_e \IN{\xi} \equiv 0}$ for $\xi \notin \Z^h_e$. If $\xi \in \Z^h_e$, then
\begin{align}\label{piecewiseVarPie}
\varPi_e \I_{\set{\xi}}(x) = \left\{ \begin{array}{ll}
\frac{x - (\xi{-}2h)}{2h}, & (\xi{-}2h) \leq x < \xi,\\
\frac{(\xi + 2h) - x}{2h}, & \xi \leq x < (\xi{+} 2h),\\
\end{array} \right.
\end{align}
and zero elsewhere, so it suffices to verify that (\ref{piecewiseVarPie}) agrees with the representation given in (\ref{DiracRep}).

$(ii)$: Suppose that $y \in [2kh, (2k{+}2)h)$ for some $k \in \Z$. One checks that
\begin{align}\label{alt_rep_varpie_ind}
\varPi_e \I_{(y, \infty)}(x) = \frac{1}{2} + \frac{1}{4h}\abs{x-2kh} - \frac{1}{4h} \abs{x-(2k{+}2)h}, x \in \R.
\end{align}
Then, by the linearity of $\varPi_o$ and by (\ref{alt_rep_varpie_ind}), we have for every $x \in \R$ that
\begin{align}\label{bothsidesofvarpiovarpie}
& \varPi_o \varPi_e \I_{(y,\infty)}(x) - \varPi_e \I_{(y,\infty)}(x) \nonumber\\
& = \frac{1}{4h} \big( \varPi_o \abs{\, \cdot \, - 2kh}(x) - \abs{x - 2kh}\big) - \frac{1}{4h} \big( \varPi_o \abs{\, \cdot \, - (2k{+}2)h}(x) - \abs{x - (2k{+}2)h}\big) \nonumber\\
& = \frac{d_o(x)}{4h} \pr{\I_{[(2k{-}1)h, (2k{+}1)h)}(x) - \I_{[(2k{+}1)h,(2k{+}3)h)}(x)},
\end{align}
since it holds for all $x \in \R$ and $m \in \Z$ that
\begin{align}\label{varpiocoincides}
\varPi_o \abs{\, \cdot \, -2mh}(x) - \abs{x - 2mh} = d_o(x)\I_{[(2m-1)h, (2m+1)h)}(x).
\end{align}
Taking the absolute values of both sides of (\ref{bothsidesofvarpiovarpie}) then completes the proof.
\end{proof}

\subsection{Auxiliary results for Section \ref{Sectionglobalerror}}
Under Assumption \ref{assumption}, recall from (\ref{pmfs}) the notation $P_{\nt + k}(x) = \p(X_{\tau_{\nt + k}} = hx)$ and $P^{J}_{\nt}(x) = \p(J_{n}{-}\nt = x)$, $x \in \Z$. Notice also that for all $k \in 2\N$,
\begin{align*}
P_{k}(x) = {k \choose \tfrac{k+x}{2}} 2^{-k}, \quad \quad x \in 2\Z, \quad \abs{x} \leq k.
\end{align*}
As in \cite{Walsh}, we define the 'effective order' of a monomial $\frac{k^p x^q}{n^r}$ with $p,q,r \in \N_{0}$ to be
$$\breve{O}\pr{\frac{k^p x^q}{n^r}} := \frac{p+q}{2} - r.$$
We will use the following result from \cite{Walsh} in the proof of Lemma \ref{Q3isok}.

\begin{proclaim}[{\cite[Proposition 11.5]{Walsh}}]\label{binomialratio} Let
\begin{align}
R : \mathcal{D}(R) \to \R, \quad &  R(n,k,x) := \frac{P_{n+k}(x)}{P_n(x)}, \nonumber\\
R^{(1)} : 2\N{\times}(2\Z)^n \to \R, \quad & R^{(1)}(n,k,x) := \frac{k}{2n} - \frac{3k^2 + 4kx^2}{8n^2} + \frac{3k^2 x^2}{4 n^3} - \frac{k^2 x^4}{8 n^4}, \label{formulaforR1}
\end{align}
where 
$$\mathcal{D}(R) := \big\{(n,k,x) \in 2\N {\times} (2\Z)^2: \abs{k} \vee \abs{x}  \leq n^{3/5}\big\}.$$ 
Then there exists a constant $C_0 > 0$, an integer $n_0$, and a finite sum $R^{(2)}$ of monomials of effective order at most $-3/2$ such that for all $(n,k,x) \in \mathcal{D}(R)$ with $n > n_0$,
\begin{align}
\abs{R(n,k,x) - [1 - R^{(1)}(n,k,x) + R^{(2)}(n,k,x)]} \leq C_0 n^{-3/2}. \label{charofR}
\end{align}
\end{proclaim}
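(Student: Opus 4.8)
The statement is \cite[Proposition 11.5]{Walsh}, and I would reproduce a streamlined version of its proof. The plan is to pass from binomial probabilities to an elementary product, take logarithms, Taylor--expand, and then exponentiate, tracking at every step exactly which monomials $k^p x^q/n^r$ survive the cut-off. Concretely, since $P_m(y)=\binom{m}{(m+y)/2}2^{-m}$ for $y\in 2\mathbb{Z}$ with $|y|\le m$, for $(n,k,x)\in\mathcal{D}(R)$ with $k\ge 0$ one has the exact identity
\[
R(n,k,x)=2^{-k}\,\frac{\binom{n+k}{(n+k+x)/2}}{\binom{n}{(n+x)/2}}
=2^{-k}\,\frac{\prod_{i=1}^{k}(n+i)}{\prod_{i=1}^{k/2}\big(\tfrac{n+x}{2}+i\big)\,\prod_{i=1}^{k/2}\big(\tfrac{n-x}{2}+i\big)},
\]
with the analogous expression (numerator and denominator interchanged) when $k<0$. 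This removes all factorials; after taking logarithms the leading $k\log n$ and the stray $k\log 2$ cancel, leaving, for $k\ge 0$,
\[
\log R(n,k,x)=\sum_{i=1}^{k}\log\Big(1+\tfrac{i}{n}\Big)-\sum_{i=1}^{k/2}\log\Big(1+\tfrac{x+2i}{n}\Big)-\sum_{i=1}^{k/2}\log\Big(1+\tfrac{2i-x}{n}\Big).
\]
On $\mathcal{D}(R)$ the bound $|k|\vee|x|\le n^{3/5}$ forces every argument $1+u$ above to satisfy $|u|\le Cn^{-2/5}$, so each $\log(1+u)$ may be expanded in powers of $u$ with a uniformly controlled remainder.

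Next I would substitute those expansions, carry out the finite inner sums over $i$ (using $\sum_{i=1}^{m}i^{\,j}$ for small $j$), and collect the outcome as a finite sum of monomials $c\,k^p x^q/n^r$ plus a tail; grouping the monomials by effective order $\breve{O}(k^p x^q/n^r)=(p+q)/2-r$ and using the domain bound to check that the discarded terms sum to $O(n^{-3/2})$ uniformly on $\mathcal{D}(R)$ yields $\log R=-Q^{(1)}+Q^{(2)}+O(n^{-3/2})$ for explicit polynomials $Q^{(1)},Q^{(2)}$. Finally I would exponentiate via $R=1+\log R+\tfrac12(\log R)^2+\cdots$, again discarding the finitely many products whose effective order has dropped far enough to be $O(n^{-3/2})$ on $\mathcal{D}(R)$; the surviving part is exactly $1-R^{(1)}(n,k,x)+R^{(2)}(n,k,x)$, where $R^{(1)}$ is the polynomial in \eqref{formulaforR1} (this identification is a direct, if tedious, computation of the contributions of effective order $-\tfrac12$ and $-1$, namely $\tfrac{k}{2n},\ \tfrac{3k^2}{8n^2},\ \tfrac{kx^2}{2n^2},\ \tfrac{3k^2x^2}{4n^3},\ \tfrac{k^2x^4}{8n^4}$) and $R^{(2)}$ collects the remaining monomials of effective order at most $-\tfrac32$; this is \eqref{charofR}.

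The main obstacle is purely one of bookkeeping: one has to handle all the $\log(1+u)$--expansions and the $\exp$--expansion simultaneously with uniform remainder estimates, and then verify, monomial by monomial, that everything not placed into $R^{(1)}$ or $R^{(2)}$ is genuinely $O(n^{-3/2})$ on the whole region $|k|\vee|x|\le n^{3/5}$ --- in particular that the total degree $p+q$ stays small enough relative to $r$ for the crude size bound $n^{3(p+q)/5-r}$ on $\mathcal{D}(R)$ to do its job. There is no conceptual difficulty beyond this, so I would carry out the combinatorics once and otherwise defer to \cite[Proposition 11.5]{Walsh}.
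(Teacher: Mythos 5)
The paper offers no proof of this statement---it is imported verbatim from \cite[Proposition 11.5]{Walsh}---and your reconstruction via the exact product formula for the binomial ratio, the logarithmic expansion, and effective-order bookkeeping is correct and is essentially the cited argument; in particular your list of surviving monomials reproduces $R^{(1)}$ exactly, since writing the effective-order $-\tfrac12$ part of $\log R$ as $A=-\tfrac{k}{2n}+\tfrac{kx^2}{2n^2}$ and the effective-order $-1$ part as $B=\tfrac{k^2}{4n^2}-\tfrac{k^2x^2}{2n^3}$ gives $A+B+\tfrac12A^2=-R^{(1)}(n,k,x)$ on the nose. The one point to keep straight is that on $\mathcal{D}(R)$ effective order $\le-\tfrac32$ does \emph{not} imply pointwise size $O(n^{-3/2})$ (e.g.\ $k^5/n^4$ has effective order $-\tfrac32$ but is only $O(n^{-1})$ when $k\sim n^{3/5}$), so one may discard only the Taylor remainders and those monomials that are genuinely pointwise $O(n^{-3/2})$, retaining all other effective-order $\le-\tfrac32$ monomials inside $R^{(2)}$---which is precisely why the proposition keeps $R^{(2)}$ explicit and why its contribution is later controlled through the moment identity (\ref{globercompformula}) rather than pointwise; your closing paragraph shows you are aware of this, so the proposal stands.
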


\begin{lemma}\label{Q3isok}
Suppose that $g \in \EB$ and that $b \geq 0$ is as in (\ref{exponentialboundednesscondition}). Suppose also that $R^{(1)}$ is as in (\ref{formulaforR1}) and that $\Gamma_{\nt}$ is given by (\ref{GammaJoukko}). Then there exists a constant $C > 0$ such that for all $x_0 \in \R$ and $\nt$,
\begin{align*}
& (i) \quad \abs{\E\big[g(x_0{+}X_{\tau_{{\nt}}}) - g(x_0{+}X_{\tau_{J_n}}) ; \Gamma^{\complement}_{\nt}\big]} \leq C \nt^{-3/2} e^{b\abs{x_0} + b^2 \sigma^2 T + b\sigma \sqrt{2 T}},\\
& (ii) \quad \bigg|\E\big[g(x_0{+}X_{\tau_{{\nt}}}) {-} g(x_0{+}X_{\tau_{J}}) ; \Gamma_{\nt} \big]\\
& \quad \quad \quad \quad \, {-}\!\sum_{k=2-\nt}^{\infty} \sum_{x= -\nt}^{\nt} g(x_0{+}xh) P^{J}_{\nt}(k) P_{\nt}(x)R^{(1)}(\nt,k,x)\bigg| \leq C \nt^{-3/2} e^{b\abs{x_0} + b^2 \sigma^2 T}.
\end{align*}
\end{lemma}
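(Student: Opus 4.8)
The plan is to prove the two estimates separately, treating $(i)$ as a preliminary step and $(ii)$ as the main computation.

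\textbf{Proof of $(i)$.} First I would split $\Gamma_{\nt}^{\complement}$ into the two pieces $\{|X_{\tau_{\nt}}/h| > \nt^{3/5}\}$ and $\{|J_n - \nt| > \nt^{3/5}\}$, using subadditivity. On each piece I want to bound $|\E[g(x_0{+}X_{\tau_{\nt}}) - g(x_0{+}X_{\tau_{J_n}}); \, \cdot\,]|$ by
$\E[|g(x_0{+}X_{\tau_{\nt}})| ; \, \cdot\,] + \E[|g(x_0{+}X_{\tau_{J_n}})|; \, \cdot\,]$, then apply the exponential bound $|g(y)| \leq A e^{b|y|}$ and Hölder's inequality with a conjugate pair close to $(1,\infty)$: for each term write $\E[e^{b|x_0| + b|X_{\tau_{\nt}}|} \I_{E}] \leq e^{b|x_0|}(\E e^{pb|X_{\tau_{\nt}}|})^{1/p} \p(E)^{1/q}$ and similarly with $X_{\tau_{J_n}}$. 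The moment generating function factors $(\E e^{pb|X_{\tau_{\nt}}|})^{1/p}$ and $(\E e^{pb|X_{\tau_{J_n}}|})^{1/p}$ are controlled by Lemma \ref{expExp} $(i)$ and $(ii)$, contributing the $e^{b^2\sigma^2 T}$ and $e^{b\sigma\sqrt{2T}}$ factors (after absorbing the $p$-dependence into the constant $C$ and the $1/p$ exponent, since $p$ can be taken as a fixed number like $2$). The probabilities $\p(E)^{1/q}$ are handled by Proposition \ref{FourEstimatesForTheGlobalError} $(ii)$ and $(iii)$: choosing the free moment parameter there large enough makes $\p(|X_{\tau_{\nt}}/h| > \nt^{3/5}) \leq C_m \nt^{-m}$ and $\p(|J_n - \nt| > \nt^{3/5}) \leq C_m \nt^{-m}$ for any $m$, in particular large enough that after taking the $1/q$ power we still beat $\nt^{-3/2}$. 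Collecting the bounds gives $(i)$.

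\textbf{Proof of $(ii)$.} On the event $\Gamma_{\nt}$ we have $|X_{\tau_{\nt}}/h| \leq \nt^{3/5}$ and $|J_n - \nt| \leq \nt^{3/5}$, and by the strong Markov property at $\tau_{\nt}$ (conditioning on $J_n - \nt$ and $X_{\tau_{\nt}}$, which are independent as the increments of the time-process and the space-process decouple, cf.~Subsection \ref{firstexittimes}), one can write
$\E[g(x_0{+}X_{\tau_{J_n}}); \Gamma_{\nt}]$ as a double sum over $k = J_n - \nt$ and $x = X_{\tau_{\nt}}/h$ of $\E[g(x_0 + hX_{\tau_{\nt + k}}/h \mid X_{\tau_{\nt}} = hx)] P^J_{\nt}(k) P_{\nt}(x)$ restricted to the lattice box $|k| \vee |x| \leq \nt^{3/5}$; expanding the inner conditional expectation over the endpoint $y$ of the extra $k$ steps gives $g(x_0 + (x+y)h) P_{k}(y)$ summed over $y$, which after reindexing equals $g(x_0 + zh) P_{\nt + k}(z)$ summed over $z$. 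Subtracting $\E[g(x_0{+}X_{\tau_{\nt}}); \Gamma_{\nt}] = \sum_x g(x_0+xh) P_{\nt}(x)\,\p(\Gamma_{\nt} \mid X_{\tau_{\nt}} = hx)$-type terms and using $R(n,k,x) = P_{n+k}(x)/P_n(x)$, the difference becomes $\sum_{k,x} g(x_0+xh) P^J_{\nt}(k) P_{\nt}(x)[R(\nt,k,x) - 1]$ over the box, plus lower-order boundary corrections coming from the difference between summing $z$ over the box versus all of $\Z$ (these are again controlled as in $(i)$, since outside the box the binomial tails decay faster than any power). Now apply Proposition \ref{binomialratio}: on $\mathcal{D}(R)$, $R(\nt,k,x) = 1 - R^{(1)}(\nt,k,x) + R^{(2)}(\nt,k,x) + O(\nt^{-3/2})$. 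The $O(\nt^{-3/2})$ remainder, when multiplied against $|g(x_0+xh)|$ and summed against the probability weights, contributes $\leq C\nt^{-3/2} e^{b|x_0|}(\E e^{2b|X_{\tau_{\nt}}|})^{1/2} \leq C \nt^{-3/2} e^{b|x_0|+b^2\sigma^2 T}$ by Lemma \ref{expExp} $(i)$ and Hölder. The $R^{(2)}$ sum, being a finite sum of monomials $k^p x^q \nt^{-r}$ of effective order $\leq -3/2$, is likewise absorbed: each such monomial against $P^J_{\nt}(k)P_{\nt}(x)$ produces $\nt^{-r} \E|J_n-\nt|^p \cdot \E[(X_{\tau_{\nt}}/h)^q \cdot (\text{bounded }g\text{-weight})]$, and by Proposition \ref{PropAzumaMoments} $\E|J_n-\nt|^p \leq C_p \nt^{p/2}$ while the $x$-moment against $g$ is $O(\nt^{q/2})$ by Proposition \ref{FourEstimatesForTheGlobalError} $(i)$ (rewriting $(X_{\tau_{\nt}}/h)^q = \nt^{q/2}(X_{\tau_{\nt}}/\sqrt{\sigma^2\tn})^q$), so the monomial is $O(\nt^{(p+q)/2 - r}) = O(\nt^{-3/2})$, all uniformly in $x_0$ up to $e^{b|x_0|+b^2\sigma^2 T}$. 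What remains after all these absorptions is exactly $-\sum_{k,x} g(x_0+xh)P^J_{\nt}(k)P_{\nt}(x)R^{(1)}(\nt,k,x)$ over the box; one last application of the tail estimates extends the $R^{(1)}$ sum from the box to the stated range $k \geq 2-\nt$, $|x| \leq \nt$ at cost $O(\nt^{-3/2})$, completing $(ii)$.

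\textbf{Main obstacle.} The delicate point is the bookkeeping of error terms: one must verify that \emph{every} correction — the $\Gamma_{\nt}^{\complement}$ contribution, the mismatch between the restricted lattice box and the full index range in both the inner ($z$) and outer ($k,x$) sums, the $C_0\nt^{-3/2}$ remainder in Proposition \ref{binomialratio}, and the $R^{(2)}$ monomials — is genuinely $O(\nt^{-3/2})$ after being weighted by the exponentially-growing-in-$x$ function $g$ and summed. The exponential growth of $g$ is what forces the systematic use of Hölder together with Lemma \ref{expExp} and Proposition \ref{FourEstimatesForTheGlobalError} $(i)$; keeping the constant uniform in $(n,t_0)$ (i.e.~depending only on $\nt$ through the stated power, and on $x_0$ only through $e^{b|x_0|+b^2\sigma^2 T}$) is the crux. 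The algebraic identity rewriting the double sum in the second displayed form of $(ii)$ is routine once the Markov decoupling is set up, so I expect the proof to be mostly a careful assembly of already-established estimates rather than a new idea.
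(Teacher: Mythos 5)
Your proposal follows essentially the same route as the paper's proof: part $(i)$ by splitting $\Gamma_{\nt}^{\complement}$, applying H\"older together with Lemma \ref{expExp} and the tail bounds (\ref{EEE})--(\ref{FFF}) with a large enough moment parameter; part $(ii)$ by using the independence of $J_n$ and $(X_{\tau_k})_{k\geq 0}$ to factor the double sum (the paper's identity (\ref{globercompformula})), invoking Proposition \ref{binomialratio}, and absorbing the $R^{(2)}$ monomials, the $C_0\nt^{-3/2}$ remainder, and the box-versus-full-range mismatches via Propositions \ref{FourEstimatesForTheGlobalError} and \ref{PropAzumaMoments}. Two points to straighten out in the write-up. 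First, a sign: the quantity in the lemma is $\E[g(x_0{+}X_{\tau_{\nt}})-g(x_0{+}X_{\tau_{J_n}});\Gamma_{\nt}]$, which equals $\sum_{k,x} g(x_0{+}xh)P^J_{\nt}(k)P_{\nt}(x)\,[1-R(\nt,k,x)]$ over the box, whereas you write the difference as $\sum[R-1]$ and conclude that the principal term is $-\sum R^{(1)}$; since $1-R\approx R^{(1)}-R^{(2)}$ and $\sum R^{(1)}$ is only $O(\nt^{-1})$, the sign must be the paper's for the stated $\nt^{-3/2}$ bound to hold (your version is just the reversed difference, so this is a bookkeeping slip rather than a wrong idea). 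Second, Proposition \ref{binomialratio} is only asserted for $n>n_0$, so the finitely many values $\nt\leq n_0$ need a separate (crude) bound, which the paper supplies by bounding $|R^{(1)}-[1-R]|$ by a finite supremum and using (\ref{DDD}); your sketch omits this, but it is routine.
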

\begin{proof}
$(i)$: Since 
$\Gamma^{\complement}_{\nt} \subset \big\{\big|X_{\tau_{\nt}}/h\big| > \nt^{3/5} \big\} \cup \big\{\abs{J_n - \nt} > \nt^{3/5}\big\},$
by H\"older's inequality, (\ref{EEE}), and (\ref{FFF}), there exists a constant $C' > 0$ such that
\begin{align*}
\abs{\E\big[g(x_0{+}X_{\tau_{{\nt}}}) - g(x_0{+}X_{\tau_{J_n}}) ; \Gamma^{\complement}_{\nt}\big]} \leq C' \nt^{-3/2}  \pr{\E\abs{g(x_0{+}X_{\tau_{{\nt}}}) - g(x_0{+}X_{\tau_{J_n}})}^2}^{1/2}.
\end{align*}
The claim follows, since by the triangle inequality, (\ref{AAA}), (\ref{BBB}), and the fact that $g \in \EB$, there exists another constant $\tilde{C} > 0$ such that
\begin{align*}
\pr{\E\abs{g(x_0{+}X_{\tau_{{\nt}}}) - g(x_0{+}X_{\tau_{J_n}})}^2}^{1/2} & \leq \tilde{C} e^{b\abs{x_0} + b^2 \sigma^2 T + b\sigma \sqrt{2 T}}.
\end{align*}

$(ii)$: This item will be proved in several intermediate steps.
\newline
\textbf{Step 1:}
Let us first show that there exists a constant $C > 0$ such that for all $x_0 \in \R$ and $\nt$,
\begin{align}
\bigg|\sum_{k=2-\nt}^{\infty} \sum_{x= -\nt}^{\nt} g(x_0{+}xh) P^{J}_{\nt}(k) P_{\nt}(x) R^{(2)}(\nt,k,x) \IN{\abs{x} \vee \abs{k} \leq \nt^{3/5}}\bigg| \leq C \nt^{-3/2} e^{b\abs{x_0} + b^2 \sigma^2 T} \label{double_sum_small_bdd},
\end{align}
where $R^{(2)}$ is as in Proposition \ref{binomialratio}. Using the relations $h = \sigma \sqrt{\smash[b]{\frac{T}{n}}}$, $\theta_n = \frac{\nt T}{2}$ and (\ref{pmfs}), it can be shown that for given integers $p,q,r \in \N_{0}$ and subsets $\Lambda_1, \Lambda_2 \subset \Z$,
\begin{align}\label{globercompformula}
& \sum_{k = 2-\nt}^{\infty} \sum_{x = -\nt}^{\nt} g(x_0{+}xh)P^{J}_{\nt}(k) P_{{\nt}}(x) \frac{k^p x^q}{{\nt}^r} \IN{x \in \Lambda_1, k \in \Lambda_2} \nonumber\\
& = \nt^{(p+q)/2 - r} \E \pl{\Big(\tfrac{X_{\tau_{{\nt}}}}{\sqrt{\sigma^2 \tn}}\Big)^q g(x_0{+}X_{\tau_{{\nt}}}); X_{\tau_{\nt}}/h \in \Lambda_1}\E \pl{\Big(\tfrac{J_n{-}{\nt}}{\sqrt{{\nt}}}\Big)^p; J_n{-}\nt \in \Lambda_2}.
\end{align}
By the definition of $R^{(2)}$, there exists an integer $N \in \N$, a vector $(a_i)_{i=1}^N \subset \R$, and $(p_i)_{i=1}^N,(q_i)_{i=1}^N,\\ (r_i)_{i = 1}^N \in \N_{0}^{N}$ such that $\frac{p_i+q_i}{2} - r_i \leq -3/2$ for all $1 \leq i \leq N$, and
$$R^{(2)}(\nt,k,x) = \sum_{i=1}^{N} a_i \frac{k^{p_i} x^{q_i}}{\nt^{r_i}} \quad \text{ for } \, (\nt,k,x) \in \mathcal{D}(R).$$
Therefore, by the relation (\ref{globercompformula}), the left-hand side of (\ref{double_sum_small_bdd}) can be rewritten and estimated by
\begin{align*}
& \abs{\sum_{i=1}^{N} a_i \nt^{\frac{p_i+q_i}{2} - r_i} \E \pl{\Big(\tfrac{X_{\tau_{{\nt}}}}{\sqrt{\sigma^2 \tn}}\Big)^{q_i} g(x_0{+}X_{\tau_{{\nt}}}); \big|X_{\tau_{\nt}}/h\big| \leq \nt^{3/5}}\E \pl{\Big(\tfrac{J{-}{\nt}}{\sqrt{{\nt}}}\Big)^{p_i}; \abs{J_n{-}\nt} \leq \nt^{3/5}}}\\
& \leq \nt^{-3/2} \sum_{i=1}^{N} \abs{a_i} \abs{\E \pl{\Big(\tfrac{X_{\tau_{{\nt}}}}{\sqrt{\sigma^2 \tn}}\Big)^{q_i} g(x_0{+}X_{\tau_{{\nt}}})}}\E \Big(\tfrac{\abs{J_n{-}{\nt}}}{\sqrt{{\nt}}}\Big)^{p_i}\\
& \leq \tilde{C} \nt^{-3/2} e^{b\abs{x_0} + b^2 \sigma^2 T},
\end{align*}
where $\tilde{C} > 0$ is a constant implied by (\ref{DDD}) and (\ref{CCC}). This proves (\ref{double_sum_small_bdd}).
\newline
\textbf{Step 2:}
Let us then show that for some constant $C > 0$ and for all $x_0 \in \R$ and $\nt \in 2\N$,
\begin{align}
\bigg|\sum_{k=2-\nt}^{\infty} \sum_{x= -\nt}^{\nt} g(x_0{+}xh) P^{J}_{\nt}(k) P_{\nt}(x)R^{(1)}(\nt,k,x) \IN{\abs{x} \vee \abs{k} > \nt^{3/5}}\bigg| & \leq C \nt^{-3/2} e^{b\abs{x_0} + b^2 \sigma^2 T} \label{gordonpart1}.
\end{align}
By (\ref{formulaforR1}) and (\ref{globercompformula}), it suffices to show that for $p,q,r \in \N_{0}$, there exists a constant $C_{p,q,r} > 0$ such that for all $x_0 \in \R$,
\begin{align*}
& \bigg|\sum_{k = 2-\nt}^{\infty} \sum_{x = -\nt}^{\nt} g(x_0{+}xh)P^{J}_{\nt}(k) P_{{\nt}}(x) \frac{k^p x^q}{{\nt}^r} \IN{\abs{x} \vee \abs{k} > \nt^{3/5}}\bigg| \leq C_{p,q,r} \nt^{-3/2} e^{b\abs{x_0} + b^2 \sigma^2 T}.
\end{align*}
We write $\{\abs{x} \vee \abs{k} > \nt^{3/5}\} = \{\abs{x} > \nt^{3/5}\} \cup {\{\abs{k} > \nt^{3/5}, \abs{x} \leq \nt^{3/5}\}}$ and consider the corresponding sums separately. By (\ref{globercompformula}) and H\"older's inequality,
\begin{align*}
& \abs{\sum_{k = 2-\nt}^{\infty} \sum_{x = -\nt}^{\nt} g(x_0{+}xh)P^{J}_{\nt}(k) P_{{\nt}}(x) \frac{k^p x^q}{{\nt}^r} \IN{\abs{x} > \nt^{3/5}}} \nonumber\\
& = \nt^{\frac{p+q}{2}-r} \abs{\E \pl{\Big(\tfrac{X_{\tau_{{\nt}}}}{\sqrt{\sigma^2 \tn}}\Big)^q g(x_0{+}X_{\tau_{{\nt}}}); \big|X_{\tau_{\nt}}/h\big| > \nt^{3/5}}\E \Big(\tfrac{J - {\nt}}{\sqrt{{\nt}}}\Big)^p} \nonumber\\
& \leq \nt^{\frac{p+q}{2}-r} \pr{\E \pl{g^2(x_0{+}X_{\tau_{\nt}})}}^{1/2}
\pr{\E \pl{\Big(\tfrac{|X_{\tau_{{\nt}}}|}{\sqrt{\sigma^2 \tn}}\Big)^{2q}; \big|X_{\tau_{\nt}}/h\big| > \nt^{3/5}}}^{1/2} \E \Big(\tfrac{\abs{J - {\nt}}}{\sqrt{{\nt}}}\Big)^p\nonumber \\
& \leq \tilde{C}_{p,q} \nt^{\frac{p+q}{2}-r} \pr{e^{2b|x_0|} \E \pl{e^{2b|X_{\tau_{\nt}}|}}}^{1/2} \p \pr{\big|X_{\tau_{\nt}}/h\big| > \nt^{3/5}}^{1/4} \nonumber\\
& \leq C_{p,q} \nt^{\frac{p+q}{2}-r} e^{b\abs{x_0} + b^2 \sigma^2 T} \p \pr{\big|X_{\tau_{\nt}}/h\big| > \nt^{3/5}}^{1/4}
\end{align*}
for some constants $\tilde{C}_{p,q}, C_{p,q} > 0$ implied by (\ref{GGG}), (\ref{CCC}), and (\ref{AAA}).
It remains to observe that by (\ref{EEE}), there exists a constant $C_{p,q,r} > 0$ such that
$$\nt^{\frac{p+q}{2}-r} \p\pr{\abs{X_{\tau_{\nt}}/h} > \nt^{3/5}}^{1/4} \leq C_{p,q,r} \nt^{-3/2}, \quad \nt \in 2\N.$$
The case of ${\{\abs{k} > \nt^{3/5}, \abs{x} \leq \nt^{3/5}\}}$ is similar: By (\ref{globercompformula}), (\ref{DDD}), (\ref{CCC}), and (\ref{FFF}), we find positive constants $\tilde{C}_{p,q}$, $\tilde{C}_{p,q,r}$ such that 
\begin{align*}
& \abs{\sum_{k = 2-\nt}^{\infty} \sum_{x = -\nt}^{\nt} g(x_0{+}xh)P^{J}_{\nt}(k) P_{{\nt}}(x) \frac{k^p x^q}{{\nt}^r} \IN{\abs{k} > \nt^{3/5}, \abs{x} \leq \nt^{3/5}}} \nonumber\\
& = \nt^{\frac{p+q}{2}-r} \abs{\E\pl{\Big(\tfrac{X_{\tau_{{\nt}}}}{\sqrt{\sigma^2 \tn}}\Big)^q g(x_0{+}X_{\tau_{\nt}}); \big|X_{\tau_{\nt}}/h\big| \leq \nt^{3/5}} \E \pl{\Big(\tfrac{J - {\nt}}{\sqrt{{\nt}}}\Big)^p; \abs{J - \nt} > \nt^{3/5}}} \nonumber\\
& \leq \nt^{\frac{p+q}{2}-r} \E\abs{\Big(\tfrac{X_{\tau_{{\nt}}}}{\sqrt{\sigma^2 \tn}}\Big)^q g(x_0{+}X_{\tau_{\nt}})} \pl{\E \pr{\tfrac{\abs{J - \nt}}{\sqrt{\nt}}}^{2p}}^{1/2} \p \pr{\abs{J - \nt} > \nt^{3/5}}^{1/2} \nonumber\\
& \leq \tilde{C}_{p,q,r} \nt^{-3/2} e^{b\abs{x_0} + b^2 \sigma^2 T}.
\end{align*}
\newline
\textbf{Step 3:} Since the processes $(\Delta \tau_k)_{k=1,2, \dots}$ and $(\Delta X_{\tau_{k}})_{k=1,2, \dots}$ are independent (see Subsection \ref{firstexittimes}), the random variable $J_n$ and the process $(X_{\tau_{k}})_{k=0,1, \dots}$ are also independent. Taking also into account that 
\begin{align*}
\text{supp} P_{\nt + k} & = \set{m \in 2\Z : \abs{m} \leq \nt + k} \quad (\text{for each } k \in 2\N),\\
\text{supp} P^{J}_{\nt} & = \set{m {-} \nt: m \in 2\N},
\end{align*}
it can be shown that
\begin{align}\label{GammaNhommeli}
& \E\big[g(x_0{+}X_{\tau_{{\nt}}}) - g(x_0{+}X_{\tau_{J_n}}) ; \Gamma_{\nt} \big] \nonumber\\
& = \sum_{k=2-\nt}^{\infty} \sum_{x= -\nt}^{\nt} g(x_0{+}xh) P^{J}_{\nt}(k) P_{{\nt}}(x)\bigg(1 - \frac{P_{{{\nt}}+k}(x)}{P_{{\nt}}(x)}\bigg) \IN{\abs{x} \vee \abs{k} \leq \nt^{3/5}} \nonumber\\
& = \sum_{k=2-\nt}^{\infty} \sum_{x= -\nt}^{\nt} g(x_0{+}xh) P^{J}_{\nt}(k) P_{{\nt}}(x) \big(1-R(\nt, k, x)\big) \IN{\abs{x} \vee \abs{k} \leq \nt^{3/5}}.
\end{align}
By (\ref{charofR}), (\ref{double_sum_small_bdd}) and (\ref{GammaNhommeli}), there exist constants $C_0, C_1 > 0$ and an integer ${n_0 \in 2\N}$ such that
whenever $\nt > n_0$,
\begin{align}\label{gordonpart2}
& \bigg|\sum_{k=2-\nt}^{\infty} \sum_{x= -\nt}^{\nt} g(x_0{+}xh) P^{J}_{\nt}(k) P_{\nt}(x)R^{(1)}(\nt,k,x)\IN{\abs{x} \vee \abs{k} \leq \nt^{3/5}} \nonumber\\
& \quad \quad \quad \quad - \E\big[g(x_0{+}X_{\tau_{{\nt}}}) - g(x_0{+}X_{\tau_{J_n}}) ; \Gamma_{\nt} \big]\bigg| \nonumber\\
& = \bigg|\sum_{k=2-\nt}^{\infty} \sum_{x= -\nt}^{\nt} g(x_0{+}xh) P^{J}_{\nt}(k) P_{\nt}(x)\big(R^{(1)}(\nt,k,x) - [1-R(\nt,k,x)]\big)\IN{\abs{x} \vee \abs{k} \leq \nt^{3/5}}\bigg|\nonumber\\
& \leq C_0 \nt^{-3/2} \sum_{k=2-\nt}^{\infty} \sum_{x= -\nt}^{\nt} \abs{g(x_0{+}xh)} P^{J}_{\nt}(k) P_{\nt}(x) \IN{\abs{x} \vee \abs{k} \leq \nt^{3/5}}\nonumber\\
& \quad \quad + \bigg| \sum_{k=2-\nt}^{\infty} \sum_{x= -\nt}^{\nt} g(x_0{+}xh) P^{J}_{\nt}(k) P_{\nt}(x)R^{(2)}(\nt,k,x) \IN{\abs{x} \vee \abs{k} \leq \nt^{3/5}}\bigg|\nonumber\\
& \leq C_0\nt^{-3/2} \E \pl{\big|g(x_0{+}X_{\tau_{{\nt}}})\big|; \big|X_{\tau_{\nt}}/h\big| \leq \nt^{3/5}} + C_1 \nt^{-3/2} e^{b\abs{x_0} + b^2 \sigma^2 T} \nonumber\\
& \leq C_2 \nt^{-3/2} e^{b\abs{x_0} + b^2 \sigma^2 T}.
\end{align}
for some constant $C_2 > 0$ implied by \eqref{DDD}. Here we applied (\ref{globercompformula}) and (\ref{double_sum_small_bdd}) for the second last inequality. Consequently, we get the claim for all $\nt > n_0$ by the triangle inequality, (\ref{gordonpart1}), and (\ref{gordonpart2}). By letting
$$M := \sup_{(n,k,x): n \leq n_0} \abs{\big(R^{(1)}(n,k,x) - [1-R(n,k,x)]\big)\IN{\abs{x} \vee \abs{k} \leq \nt^{3/5}}} < \infty,$$
for $\nt \leq n_0$ we find another constant $C_3 = C_3(n_0) > 0$ such that

\begin{align}\label{bigMsmallnt}
& \bigg|\sum_{k=2-\nt}^{\infty} \sum_{x= -\nt}^{\nt} g(x_0{+}xh) P^{J}_{\nt}(k) P_{\nt}(x)\big(R^{(1)}(\nt,k,x) - [1-R(\nt,k,x)]\big)\IN{\abs{x} \vee \abs{k} \leq \nt^{3/5}}\bigg|\nonumber\\
& \leq M \sum_{k=2-\nt}^{\infty} \sum_{x= -\nt}^{\nt} \abs{g(x_0{+}xh)} P^{J}_{\nt}(k) P_{\nt}(x) \IN{\abs{x} \vee \abs{k} \leq \nt^{3/5}}\nonumber\\
& \leq M \E\pl{\big|g(x_0{+}X_{\tau_{\nt}})\big|; \big|X_{\tau_{\nt}}/h\big| \leq \nt^{3/5}} \p\pr{\abs{J_{\nt} - \nt} \leq \nt^{3/5}} \nonumber\\
& \leq C_3 \nt^{-3/2}e^{b\abs{x_0} + b^2 \sigma^2 T}
\end{align}
by (\ref{DDD}). Combine (\ref{gordonpart1}), (\ref{gordonpart2}), and (\ref{bigMsmallnt}) to complete the proof.
\end{proof}

\noindent \textbf{Acknowledgement} The author was financially supported by the Magnus Ehrnrooth foundation during the preparation of this manuscript.

\end{document}